\date{}
\newtheorem{statement}{}[section]
\newtheorem{theorem}[statement]{Theorem}
\newtheorem{lemma}[statement]{Lemma}
\newtheorem{proposition}[statement]{Proposition}
\newtheorem{definition}[statement]{Definition}
\newtheorem{corollary}[statement]{Corollary}
\newcommand\C{\mathbb C}
\newcommand\R{\mathbb R}
\newcommand\T{\mathbb T}
\newcommand\D{\mathbb D}
\newcommand\e{{\rm e}}
\newcommand\eps{\varepsilon}
\newcommand\ind{\mathds{1}}
\newcommand\dis{\displaystyle}
\renewcommand \Re{{\mathfrak R}{\rm e}\,}
\renewcommand \Im{{\mathfrak I}{\rm m}\,}
\newcommand\converge{\mathop{\longrightarrow}\limits}
\let\phi=\varphi
\let\hat = \widehat
\let\tilde=\widetilde
\newcommand\tq{\, ; \ }
\newcommand{\overbar}[1]{\mkern 1.5mu\overline{\mkern-1.5mu#1\mkern-1.5mu}\mkern 1.5mu}
\title{\bf Characterization of weighted Hardy spaces  on which all composition operators are bounded}
\author{\it Pascal~Lef\`evre, Daniel~Li,  \\ \it Herv\'e~Queff\'elec, Luis~Rodr{\'\i}guez-Piazza}
\date{\footnotesize \today}
\begin{document}

\maketitle

\noindent {\bf Abstract.} We give a complete characterization of the sequences $\beta = (\beta_n)$ of positive numbers for which all composition operators on 
$H^2 (\beta)$ are bounded, where $H^2 (\beta)$ is the space of analytic functions $f$ on the unit disk $\D$  such that 
$\sum_{n = 0}^\infty |a_n|^2 \beta_n < + \infty$ if $f (z) = \sum_{n = 0}^\infty a_n z^n$. 
We prove that all composition operators are bounded on $H^2 (\beta)$ if and only if $\beta$ is essentially decreasing and slowly oscillating.
We also prove that every automorphism of the unit disk induces a bounded composition operator on $H^2 (\beta)$ if and only if $\beta$ is slowly oscillating. 
We give applications of our results.
\medskip

\noindent {\bf MSC 2010.} primary: 47B33 ; secondary: 30H10
\smallskip

\noindent {\bf Key-words.} automorphism of the unit disk; composition operator; slowly oscillating sequence; weighted Hardy space 

\section {Introduction} \label{sec: intro}

Let $\beta = (\beta_n)_{n\geq 0}$ be a sequence of positive numbers such that
\begin{equation} \label{unus} 
\liminf_{n \to \infty} \beta_n^{1/n} \geq 1 \, .
\end{equation} 
The associated weighted Hardy space $H^{2} (\beta)$ is the Hilbertian space of analytic functions $f (z) = \sum_{n = 0}^\infty a_n z^n$  such that 
 \begin{equation} \label{duo} 
\Vert f \Vert^2 := \sum_{n = 0}^\infty |a_n|^2 \beta_n < \infty \, . 
\end{equation}

Condition (\ref{unus}) is equivalent to the inclusion $H^{2} (\beta) \subseteq {\cal H}ol (\D)$. Indeed, if  \eqref{unus} holds, we have 
$H^2 (\beta) \subseteq {\cal H}ol (\D)$ since $|a_n|^2\beta_n$  is bounded and thanks to the Hadamard formula. Conversely, testing the inclusion 
$H^2 (\beta)\subseteq {\cal H}ol (\D)$ on the function $f(z)=\sum_{n=1}^\infty \frac{1}{n\sqrt{\beta(n)}}z^n \in H^2 (\beta) $, we get \eqref{unus} 
from the Hadamard formula. 
\smallskip

Condition \eqref{unus} will therefore be assumed throughout this paper, without repeating it.
\smallskip

When $\beta_n \equiv 1$, we recover the usual Hardy space $H^2$; the Bergman space corresponds to $\beta_n = 1 / (n + 1)$, 
and the Dirichlet space to $\beta_n = n + 1$.
\smallskip

Recall that a symbol is a (non constant) analytic self-map $\phi \colon \D \to \D$, and the associated composition operator 
$C_\phi \colon H^2 (\beta) \to {\cal H}ol (\D)$ is defined as: 
\begin{equation} 
C_\phi (f) = f \circ \phi \, .
\end{equation} 

An important question in the theory is to decide when $C_\phi$ is bounded on $H^2 (\beta)$, i.e. when  $C_\phi \colon H^2 (\beta) \to H^2 (\beta)$. 

This question appears in the literature in several places. 
For instance, it is Problem 1 in the thesis of Nina Zorboska \cite[Pb~1, p.\,49]{ZorboPgD}. 
This thesis contains many interesting results, in particular Proposition~\ref{simpleBoundVan} and Proposition~\ref{tous pour un} of the present paper (actually 
we discovered the content of Zorboska's thesis once the present paper was almost finished).
See also Question 36 raised by Deddens in \cite[p.\,122.c]{Shields}.
\smallskip

When $H^2 (\beta)$ is the usual Hardy space $H^2$ (i.e. when $\beta_n \equiv 1$), it is well-known, as a consequence of the Littlewood subordination principle 
(see \cite{Littlewood}), that all symbols generate bounded composition operators (see \cite[pp.~13--17]{Shapiro-livre}). On the other hand, for the Dirichlet space, 
corresponding to $\beta_n = n + 1$, not all composition operators are bounded since there exist symbols $\varphi$ not belonging to the Dirichlet space 
(e.g. any infinite Blaschke product). 

Note that, by definition of the norm of $H^2 (\beta)$, all rotations $R_\theta$, defined by $R_\theta(z)=\e^{i\theta}z$, with $\theta \in \R$, induce bounded and 
surjective composition operators on $H^2 (\beta)$ and send isometrically $H^2 (\beta)$ into itself.  
\smallskip

Our goal in this paper is characterizing the sequences $\beta$ for which all composition operators act boundedly on the space $H^2 (\beta)$, i.e. 
send $H^2 (\beta)$ into itself. 
\smallskip

In Shapiro's presentation for the Hardy space $H^2$, the main point is the case $\phi (0) = 0$ and a subordination principle for subharmonic functions 
(Littlewood's subordination principle). 
The case of automorphisms is claimed simple, using an integral representation for the norm and some change of variable. 
For general weights $\beta$, the situation is different, as we will see in this paper, and it turns out that the conditions on $\beta$ for the boundedness of the 
composition operators $C_\phi$ on $H^2 (\beta)$ are not the same depending on whether we consider the class of all symbols such that $\phi (0) = 0$, or the 
class of symbols $\phi = T_a$, where 
\begin{equation} 
T_ a (z) = \frac{a + z}{1+ \bar{a} \, z} 
\end{equation} 
for $a \in \D$. 

It is clear that when these two classes of composition operators are bounded, then all composition operators are bounded. 
Recall that every symbol $\varphi$ can be written as the composition $\varphi=T_a\circ\psi$ where $\psi(0)=0$ and $a=\varphi(0)$; and then 
$C_\phi = C_\psi \circ C_{T_a}$.
\smallskip

In many occurrences, the weight $\beta$ is defined as 
\begin{equation} \label{integral beta}
\beta_n = \int_{0}^1 t^n \, d \sigma (t) 
\end{equation} 
where $\sigma$ is a positive measure on $(0, 1)$; more specifically the following definition is often used: 
let $G \colon (0, 1) \to \R_+$ be an integrable function and let $H^2_G$ be the space of analytic functions $f \colon \D \to \C$ such that:
\begin{equation} \label{particular case}
\Vert f \Vert_{H^2_G}^{2} := \int_{\D} |f (z)|^2 \, G (1 - |z|^2) \, dA (z) < \infty \, .
\end{equation} 

Such weighted Bergman type spaces are used, for instance, in  \cite{Karim-Pascal}, \cite{Kriete-MacCluer} and in \cite{LQR-radius}. We have 
$H^2_G = H^2 (\beta)$ with:
\begin{equation} \label{coto}
\beta_n = 2 \int_{0}^{1} r^{2 n + 1} G (1 - r^2) \, dr = \int_{0}^{1} t^{n} \, G (1 - t ) \, dt \, ,
\end{equation} 
and the sequence $\beta = (\beta_n)_n$ is \emph{non-increasing} (actually, the above representation \eqref{integral beta} is equivalent, by the Hausdorff 
moment theorem, to a high regularity of the sequence $\beta$, namely its \emph{complete monotony}). 

When the weight $\beta$ is non-increasing (or more generally, essentially decreasing), all the symbols vanishing at the origin induce a bounded composition 
operator. This was proved by C.~Cowen \cite[Corollary, page~31]{Cowen}, using Hadamard multiplication. 
We can also use Kacnel'son's theorem (see \cite{Isabelle} or \cite[Theorem~3.12]{LLQR-comparison}). 
Actually that follows from an older theorem of Goluzin \cite {Goluzin} (see \cite[Theorem~6.3]{Duren}), which itself uses  a self-refinement observed by 
Rogosinski of Littlewood's principle (\cite[Theorem~6.2]{Duren}). 

For weights defined as in \eqref{integral beta}, we dispose of integral representations for the norm in $H^2 (\beta)$, and, as in the Hardy space case, this integral 
representation rather easily allows us to decide when the boundedness of $C_{T_a}$ on $H^2 (\beta)$ occurs.
This is not always the case, as shown by T.~Kriete and B.~MacCluer in \cite{Kriete-MacCluer}. 
They consider spaces of Bergman type $A^2_{\tilde G} := H^2_G$, where $\tilde G (r) = G (1 - r^2)$, defined as the spaces of analytic functions in $\D$ such 
that $\int_\D | f (z)|^2 \, {\tilde G} (|z|) \, dA < \infty$, for a positive non-increasing continuous function ${\tilde G}$ on $[0, 1)$. 
They prove \cite[Theorem~3]{Kriete-MacCluer} that for
\begin{displaymath} 
\qquad \qquad \qquad {\tilde G} (r) = \exp \, \bigg( - B \, \frac{1}{(1 - r)^\alpha} \bigg) \, , \qquad B > 0 \, , \ 0 < \alpha \leq 2 \, ,
\end{displaymath} 
and 
\begin{displaymath} 
\qquad \qquad \qquad \qquad \phi (z) = z + t (1 - z)^\gamma \, , \qquad \qquad 1 <\gamma \leq 3 \, , \ 0 < t < 2^{1 - \gamma} \, ,
\end{displaymath} 
then $\phi$ is a symbol and $C_\phi$ is bounded on $A^2_{\tilde G}$ if and only if $\gamma \geq \alpha + 1$.

Here
\begin{displaymath} 
\beta_n = \int_0^1 t^n \e^{- B / (1 - \sqrt{t})^\alpha} \, dt \lesssim \exp ( - c \, n^{\alpha / (\alpha  +1)} ) \, .
\end{displaymath} 
We point out that $\beta$ is non-increasing, so for every symbol $\phi$ fixing the origin, the composition operator $C_\phi$ is bounded. 
Nevertheless, choosing $\gamma < \alpha + 1$, there exist symbols inducing an unbounded composition operator, hence not all the $C_{T_a}$ are bounded. 
Actually, for every $\alpha \in(0,2]$, no $C_{T_a}$ is bounded because $\beta$ has no polynomial lower estimate (see Proposition~\ref{polynomial lb necessary} 
below).
\medskip

Let us describe the contents of this paper. In Section~\ref{sec: def}, we introduce several notion of growth or regularity for a sequence $\beta$: essentially 
decreasing, polynomial decay and polynomial growth, slow oscillation, and give some connections 
between them. 
In Section~\ref{sec: phi (0) = 0}, we consider the composition operators whose symbol vanishes at the origin. 
We show that in order for all these operators to be bounded, it is necessary that $\beta$ be bounded above. 
We show that $\beta$ is essentially decreasing if and only if all these operators are bounded and $\sup_{\phi (0) = 0} \| C_\phi \| < + \infty$. 
In Theorem~\ref{theo cond suff}, we give a sufficient condition for having all the composition operators $C_\phi$ with $\phi (0) = 0$ bounded, allowing us to 
give an example of a sequence $\beta$ for which this happens though 
$\sup_{\phi (0) = 0} \| C_\phi \| = + \infty$ (Theorem~\ref{theo example}). 
In Section~\ref{sec: T_a}, we prove that all $C_{T_a}$ are bounded on $H^2 (\beta)$ if and only if $\beta$ is slowly oscillating 
(Theorem~\ref{theo CS T_a beta SO} and Theorem~\ref{theo CN T_a beta SO}). We hence have:

\begin{theorem} \label{main theorem completed}
Let $\beta$ be a sequence of positive numbers, and let 
\begin{displaymath}
T_ a (z) = \frac{a + z}{1+ \bar{a} \, z}
\end{displaymath}
for $a \in \D$. The following assertions are equivalent:

\begin{itemize}
\setlength\itemsep {-0.1 em}

\item [$1)$] for some $a\in\D\setminus\{0\}$, the map $T_a$ induces a bounded composition operator $C_{T_a}$ on $H^2 (\beta)$;

\item [$2)$] for all $a \in \D$, the maps $T_a$ induce bounded composition operators $C_{T_a}$ on $H^2(\beta)$;

\item [$3)$] $\beta$ is slowly oscillating.
\end{itemize}
\end{theorem}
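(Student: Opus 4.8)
\textbf{Proof strategy for Theorem~\ref{main theorem completed}.}

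The plan is to prove the cycle $2) \Rightarrow 1) \Rightarrow 3) \Rightarrow 2)$. The implication $2) \Rightarrow 1)$ is trivial since $\D \setminus \{0\} \neq \emptyset$. The real content is in the two remaining implications, and the key reduction is the elementary identity $T_{-a} = T_a^{-1}$, so that $C_{T_a}$ is invertible with inverse $C_{T_{-a}}$; moreover $T_a$ maps $0$ to $a$ and $-a$ to $0$. The first technical ingredient I would set up is an explicit (or asymptotic) formula for $\|C_{T_a} e_n\|^2$ in $H^2(\beta)$, where $e_n(z) = z^n$: since $T_a(z)^n = \big(\frac{a+z}{1+\bar a z}\big)^n$ is an explicit analytic function, its Taylor coefficients can be written down, and one gets $\|C_{T_a} e_n\|^2 = \sum_{k} |c_{n,k}(a)|^2 \beta_k$ for explicit $c_{n,k}(a)$. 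The behavior of these coefficients (they concentrate the mass near $k \asymp n$, with Gaussian-type spreading of width $\asymp \sqrt n$ governed by $|a|$) is what links boundedness of $C_{T_a}$ to a regularity property of $\beta$.

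For $1) \Rightarrow 3)$: assuming $C_{T_a}$ bounded for some fixed $a \neq 0$, I would test boundedness on the monomials $e_n$. Boundedness gives $\sum_k |c_{n,k}(a)|^2 \beta_k \leq \|C_{T_a}\|^2 \beta_n$. Because the coefficients $c_{n,k}(a)$ put a fixed positive proportion of their $\ell^2$-mass on indices $k$ in a window of length $\asymp \sqrt n$ around $n$ (this is the analytic heart of the estimate — a stationary-phase / saddle-point or direct combinatorial estimate on the coefficients of $\big(\frac{a+z}{1+\bar a z}\big)^n$), we deduce $\beta_k \lesssim \beta_n$ for all such $k$, and by applying the inverse operator $C_{T_{-a}}$ we get the reverse inequality $\beta_n \lesssim \beta_k$ as well. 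Iterating/chaining these windows (each step moves the index by $\asymp \sqrt n$, and $\sqrt n \to \infty$ so consecutive windows overlap once $n$ is large) yields exactly: for every $\varepsilon > 0$ there is $C_\varepsilon$ with $C_\varepsilon^{-1} \leq \beta_m/\beta_n \leq C_\varepsilon$ whenever $n \leq m \leq (1+\varepsilon) n$ — i.e. $\beta$ is slowly oscillating (in the sense defined in Section~\ref{sec: def}). One must be a little careful to make the "fixed proportion of mass" statement uniform in $n$; this is where the main obstacle lies.

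For $3) \Rightarrow 2)$: assume $\beta$ slowly oscillating. I would estimate $\|C_{T_a} f\|^2$ for a general $f = \sum a_n e_n$. Two natural routes: (i) an integral-representation / change-of-variables argument — if $\beta$ admits (or can be compared to) a representation like \eqref{integral beta}, the automorphism change of variable $z \mapsto T_a(z)$ distorts $1-|z|^2$ by the bounded factor $\frac{1-|a|^2}{|1+\bar a z|^2}$, which lies between $\frac{1-|a|}{1+|a|}$ and $\frac{1+|a|}{1-|a|}$, and slow oscillation of $\beta$ is exactly what makes the resulting weight comparable to the original; or (ii) a direct operator-norm estimate on the "Gram-type" matrix of $C_{T_a}$ in the basis $(e_n/\sqrt{\beta_n})$, using a Schur test. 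For the Schur test one uses that the matrix entries are concentrated near the diagonal within a band of width $\asymp \sqrt n$, and slow oscillation gives $\beta_k \asymp \beta_n$ uniformly across that band, reducing the weighted estimate to the unweighted Hardy space case, where $C_{T_a}$ is bounded by Littlewood's principle. I expect route (ii), the Schur test with the explicit band structure of the matrix of $C_{T_a}$, to be the cleanest and to be the part of the argument that requires the most care in bookkeeping; alternatively the authors may simply invoke the $T_a$-results of Section~\ref{sec: T_a} (Theorem~\ref{theo CS T_a beta SO} and Theorem~\ref{theo CN T_a beta SO}) directly, in which case the present theorem is just their combination with the trivial observation that a single automorphism generates, by composition with rotations and inverses, enough maps to force the full regularity.
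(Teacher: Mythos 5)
Your overall architecture is right: the theorem is indeed the conjunction of Proposition~\ref{tous pour un} (which gives $1)\Rightarrow 2)$ via rotations and a connectedness/iteration argument on the moduli), Theorem~\ref{theo CS T_a beta SO} (which gives $3)\Rightarrow 2)$), and Theorem~\ref{theo CN T_a beta SO} (which gives $2)\Rightarrow 3)$); and your route (ii) for $3)\Rightarrow 2)$ --- exponentially small matrix entries off a band, slow oscillation making $\beta_m/\beta_n$ bounded on the band, and a block decomposition reducing to the unweighted case --- is essentially Lemma~\ref{majo coeff} plus Lemma~\ref{lemma LU}. However, your quantitative picture of the Taylor coefficients of $T_a^n$ is wrong, and this breaks your sketch of the hard implication $1)\Rightarrow 3)$. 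By stationary phase ($T_a(\e^{ix})=\e^{iV_a(x)}$ with $V_a'=P_{-a}$), the coefficients $\widehat{T_a^n}(m)$ are of order $n^{-1/2}$ for $m/n$ ranging over a \emph{fixed} interval around $1$ (essentially the range of $P_{-a}$): the $\ell^2$-mass spreads over a band of width $\asymp n$, not $\asymp\sqrt n$. With windows of width $\sqrt n$, your chaining would need about $\eps\sqrt n$ steps to travel from $n$ to $(1+\eps)n$, each step losing a fixed constant factor, so the resulting constant would depend on $n$ and you would not obtain slow oscillation.

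Second, and more seriously, ``a fixed positive proportion of the $\ell^2$-mass lies on indices in the window'' does not imply $\beta_k\lesssim\beta_n$ for \emph{every} $k$ in the window --- the mass could concentrate on a single index. What is needed is a pointwise-in-$m$ lower bound $|\widehat{T_a^n}(m)|^2\gtrsim 1/n$, and this is false for individual $a$: the stationary-phase asymptotics carry a factor $\cos[\pi/4+nF_q(x_q)]$ which oscillates and can be arbitrarily small. The paper circumvents this by proving the lower bound \emph{in mean over $a$} in a subinterval of $(0,1)$ (Proposition~\ref{sept}), which in turn requires the local uniform bound $\sup_{1/2\le a\le 2/3}\|C_{T_a}\|<\infty$ from Proposition~\ref{tous pour un}. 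Even granting that, the row estimate only yields that the \emph{arithmetic mean} of the $\beta_j$ over the window is $\lesssim\beta_n$; to upgrade this to the pointwise bound $\beta_m\lesssim\beta_n$ the paper also exploits the \emph{column} $\ell^2$ bounds, which control the harmonic mean of the $\beta_k$ from below by $\beta_m$, and then compares harmonic and arithmetic means. Your proposed substitute --- applying the inverse operator $C_{T_{-a}}$ to the monomials --- gives no new information, since $T_{-a}(-z)=-T_a(z)$ implies $|\widehat{T_{-a}^n}(k)|=|\widehat{T_a^n}(k)|$, so you just recover the same row inequality. These are the genuinely missing ideas in the implication $1)\Rightarrow 3)$, which is precisely the part the paper calls the deep one.
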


The deep implication is $2)\Rightarrow3)$. Its proof requires some sharp estimates on the mean of Taylor coefficients of $T_a$ for $a$ belonging to a subinterval 
of $(0,1)$. Once we found the equivalence of $1)$ and $2)$, we realized that it already appeared in Zorboska's thesis \cite{ZorboPgD}.

In Section~\ref{all symbols}, we show (Theorem~\ref{example without ED}) that if $\beta$ is slowly oscillating, and moreover all composition operators are 
bounded on $H^2 (\beta)$, then $\beta$ is essentially decreasing. We thus obtain the following theorem. 
\goodbreak

\begin{theorem} \label{full characterization}
Let $\beta$ be a sequence of positive numbers.  
Then all composition operators on $H^2 (\beta)$ are bounded if and only if $\beta$ is essentially decreasing and slowly oscillating. 
\end{theorem}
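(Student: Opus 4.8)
The plan is to obtain Theorem~\ref{full characterization} by gluing together the two regimes already analyzed — symbols fixing the origin and the disk automorphisms $T_a$ — and then invoking the result of Section~\ref{all symbols}. No new estimate should be required at this stage: the argument should be a short synthesis of what precedes.

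For the \emph{sufficiency} direction I would argue as follows. Assume $\beta$ is essentially decreasing and slowly oscillating. From essential decrease, the Cowen-type result recalled in Section~\ref{sec: intro} (cf. Proposition~\ref{simpleBoundVan}) gives that $C_\psi$ is bounded on $H^2(\beta)$ for every symbol $\psi$ with $\psi(0)=0$. From slow oscillation, the implication $3)\Rightarrow2)$ of Theorem~\ref{main theorem completed} (Theorem~\ref{theo CS T_a beta SO}) gives that $C_{T_a}$ is bounded for every $a\in\D$. Given an arbitrary symbol $\phi$, I would use the factorization $\phi=T_a\circ\psi$ with $a=\phi(0)$ and $\psi(0)=0$ recalled in the introduction, so that $C_\phi=C_\psi\circ C_{T_a}$ is bounded as a product of two bounded operators.

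For the \emph{necessity} direction, suppose every composition operator on $H^2(\beta)$ is bounded. Each $T_a$ with $a\in\D\setminus\{0\}$ is itself a (non-constant) symbol, so all the operators $C_{T_a}$ are bounded; the deep implication $1)\Rightarrow3)$ of Theorem~\ref{main theorem completed} (Theorem~\ref{theo CN T_a beta SO}) then shows that $\beta$ is slowly oscillating. Thus $\beta$ is slowly oscillating and, by hypothesis, carries only bounded composition operators, so Theorem~\ref{example without ED} applies and yields that $\beta$ is essentially decreasing. Combining the two directions completes the proof.

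The step I expect to carry the real weight is not in this final argument — which is essentially bookkeeping — but in the two results it rests upon: the implication $2)\Rightarrow3)$ of Theorem~\ref{main theorem completed}, whose proof needs the sharp control of the averaged Taylor coefficients of $T_a$ for $a$ ranging over a subinterval of $(0,1)$, and Theorem~\ref{example without ED}, where from a $\beta$ that is slowly oscillating but not essentially decreasing one must construct an explicit symbol inducing an unbounded composition operator. Within the synthesis itself, the only mild care needed is to check that the factorization $\phi=T_a\circ\psi$ covers the degenerate cases (when $\phi$ is already an automorphism, or already fixes the origin) and that it is genuinely essential decrease, rather than plain monotonicity, that is being used for the vanishing case.
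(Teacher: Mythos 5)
Your proof is correct and follows exactly the paper's own argument: sufficiency via the factorization $\phi = T_a\circ\psi$ with $\psi(0)=0$ combining Proposition~\ref{phi(0)=0} (essential decrease handles $\psi$) with Theorem~\ref{theo CS T_a beta SO} (slow oscillation handles $T_a$), and necessity via Theorem~\ref{theo CN T_a beta SO} and Theorem~\ref{example without ED}. The only slip is a citation: the boundedness of $C_\psi$ for $\psi(0)=0$ under essential decrease is Proposition~\ref{phi(0)=0} (the Cowen/Goluzin--Rogosinski result), not Proposition~\ref{simpleBoundVan}, which states a different (necessary) condition.
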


For the notion of essentially decreasing and slowly oscillating sequences, see Definition~\ref{def: essentially decreasing} and Definition~\ref{def: SO}
\medskip

We end the paper with some results about multipliers.

\medskip

A first version of this paper, not including the complete characterization given here, was put on arXiv on 30 November 2020 
(and a second version on 21 March 2022) under the title \emph{Boundedness of composition operators on general weighted Hardy spaces of analytic functions}.

\section {Definitions, notation, and preliminary results} \label{sec: def}

The open unit disk of $\C$ is denoted $\D$ and we write $\T$ its boundary $\partial \D$. We set $e_n (z) = z^n$, $n \geq 0$.
\smallskip

The weighted Hardy space $H^2(\beta)$ defined in the introduction is a Hilbert space with the canonical orthonormal basis 
\begin{equation} 
\qquad \qquad e^\beta_{n} (z) = \frac{1}{\sqrt{\beta_n}} \, z^n \, , \quad  n\geq 0 \, ,
\end{equation} 
and the reproducing kernel $K_w$ given for all $w\in \D$ by 
 \begin{equation} \label{tres} 
K_{w} (z) = \sum_{n = 0}^\infty e^\beta_{n} (z) \, \overbar{e^\beta_{n} (w)} 
= \sum_{n = 0}^\infty \frac{1}{\beta_n} \,\overbar{w}^n \, z^n \, .
\end{equation}

Note that $H^2$ is continuously embedded in $H^2 (\beta)$ if and only if $\beta$ is bounded above. In particular, this is the case when $\beta$ is 
non-increasing. In this paper, we need a slightly more general notion.

\begin{definition} \label{def: essentially decreasing}
A sequence of positive numbers $\beta = (\beta_n)_{n\geq 0}$ is said to be \emph{essentially decreasing} if, for some constant $C \geq 1$, we have, for all 
$m \geq n \geq 0$:
\begin{equation} 
\beta_m \leq C \, \beta_n \, .
\end{equation} 
\end{definition}

Note that saying that $\beta$ is essentially decreasing means that the shift operator on $H^2 (\beta)$ is power bounded.
\smallskip

If $\beta$ is essentially decreasing, and if we set:
\begin{displaymath} 
\tilde \beta_n = \sup_{m \geq n} \beta_m \, ,
\end{displaymath} 
the sequence $\tilde \beta = (\tilde\beta_n)$ is non-increasing and we have $\beta_n \leq \tilde \beta_n \leq C \, \beta_n$. 
In particular, $H^2(\beta)=H^2(\tilde \beta)$ (with equivalent norms) and $H^2$ is continuously embedded in $H^2 (\beta)$. 

\begin{definition} \label{def: SO} 
We say that a sequence $\beta$ is \emph{slowly oscillating} if there are positive constants $c < 1 < C$ such that 
\begin{equation}
\qquad c \leq \frac{\beta_m}{\beta_n} \leq C \quad \text{when } n / 2 \leq m \leq 2 n \, . 
\end{equation}
\end{definition}
We may remark  that this is equivalent to the existence of some function $\rho \colon (0, \infty) \to (0, \infty)$ which is bounded above on each compact 
subset of $(0, \infty)$ and for which $\beta_m / \beta_n \leq \rho (m / n)$, equivalently $$\frac{1}{\rho(n/m)} \leq\frac{\beta_m}{\beta_n}\leq \rho (m / n).$$

\begin{definition} \label{def: polynomial minoration}
The sequence of positive numbers $\beta = (\beta_n)$ is said to have polynomial decay if there are positive constants $c$ and $\alpha$ 
such that, for all integers $n \geq 1$:
\begin{equation}\label{slow}  
\beta_{n} \geq c \, n^{- \alpha} \, .
\end{equation} 
\end{definition}
That means that $H^2 (\beta)$ is continuously embedded in the weighted Bergman space ${\mathfrak B}^2_{\alpha-1}$ of the analytic functions 
$f \colon \D \to \C$ such that 
\begin{displaymath} 
\| f \|_{{\mathfrak B}^2_{\alpha-1}}^2 := \alpha\int_\D |f (z)|^2 (1 - |z|^2)^{\alpha-1} \, dA (z) < \infty
\end{displaymath} 
since ${\mathfrak B}^2_{\alpha-1} = H^2(\gamma)$ with $\gamma_n \approx n^{-\alpha}$.

\begin{definition} \label{def: polynomial majoration}
The sequence of positive numbers $\beta = (\beta_n)$ is said to have \emph{polynomial growth} if  there are positive constants $C$ and $\gamma$ 
such that, for all integers $n \geq 1$:
\begin{equation}\label{eq: polyn maj}  
\beta_{n} \leq C \, n^\gamma \, .
\end{equation} 
\end{definition}
\medskip

The following simple proposition links those notions.

\begin{proposition} \label{simple proposition polynomial minoration} \hfill \par

$1)$ Every slowly oscillating sequence $\beta$ has polynomial decay and growth.

$2)$ There are sequences that are essentially decreasing, and with polynomial decay, but not slowly oscillating.

$3)$ There are bounded sequences that are slowly oscillating, but not essentially decreasing.
\end{proposition}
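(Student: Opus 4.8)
For part $1)$, the plan is to iterate the two-sided bound in Definition~\ref{def: SO}. Given $n \geq 1$, write $n$ in the form where we pass from $1$ to $n$ by roughly $\log_2 n$ doublings: choose $k$ with $2^{k-1} \leq n < 2^k$, so $k \leq 1 + \log_2 n$. Applying the inequality $c \leq \beta_m/\beta_n \leq C$ successively along the chain $1, 2, 4, \dots, 2^{k-1}, n$ (each consecutive pair being within a factor $2$) yields $c^{k+1} \beta_0 \leq \beta_n \leq C^{k+1} \beta_0$. Since $C^{k+1} \leq C^{2 + \log_2 n} = C^2 \, n^{\log_2 C}$ and similarly $c^{k+1} \geq c^2 \, n^{\log_2 c} = c^2 \, n^{-\log_2(1/c)}$, we obtain both $\beta_n \geq c' n^{-\alpha}$ with $\alpha = \log_2(1/c)$ and $\beta_n \leq C' n^{\gamma}$ with $\gamma = \log_2 C$. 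This is the only part with genuine (if routine) content; the main point is simply that "multiplicative Lipschitz under doubling" forces polynomial two-sided control, and I do not expect any obstacle here.

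For part $2)$, I would exhibit an explicit sequence that decays in long "flat" plateaus separated by sudden drops, arranged so that a drop occurs within a doubling window (killing slow oscillation) but the overall decay stays polynomial and monotone (so it is trivially essentially decreasing). A clean choice: let $\beta_n = 2^{-k}$ for $2^{2^k} \leq n < 2^{2^{k+1}}$ (and $\beta_n = 1$ for small $n$), which is non-increasing, hence essentially decreasing with $C = 1$. On the block $[2^{2^k}, 2^{2^{k+1}})$ the value is $2^{-k}$; comparing the left endpoint of consecutive blocks, $\beta_{2^{2^{k+1}}}/\beta_{2^{2^k}} = 1/2$ while $2^{2^{k+1}}/2^{2^k} = 2^{2^k} \to \infty$, which already shows non-slow-oscillation only if the ratio of indices within a doubling window can straddle the jump — and it does: take $n = 2^{2^{k+1}} - 1$ and $m = 2^{2^{k+1}}$, so $n/2 \leq m \leq 2n$ but $\beta_m/\beta_n = 1/2$ for arbitrarily large $k$... wait, that gives a bounded ratio. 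The correct fix is to make each single drop large: set $\beta_n = 2^{-k^2}$ on the $k$-th block, or equivalently make the drop factor grow; then within the doubling window straddling a block boundary the ratio $\beta_m/\beta_n$ is $2^{-(2k+1)} \to 0$, violating the lower bound $c$ in Definition~\ref{def: SO}, while one checks directly that $\beta_n \gtrsim n^{-\alpha}$ fails unless the blocks grow fast enough — so instead I would keep blocks at $[2^{2^k}, 2^{2^{k+1}})$ but drop by a factor $2^{2^k}$, making $\beta_n \approx 1/n$ roughly (polynomial decay), non-increasing (essentially decreasing), yet with an unbounded single-step drop inside a doubling window. I will pin down one such explicit formula in the write-up; the construction is elementary, the only care needed is simultaneously arranging "big single jump" and "polynomial lower bound."

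For part $3)$, I want a slowly oscillating but not essentially decreasing sequence, and by definition slow oscillation already forbids large jumps in doubling windows, so the failure of "essentially decreasing" must come from a slow unbounded increase. The natural candidate is $\beta_n = \log(n + 2)$ (or any slowly varying increasing function): it is bounded? — no, it is unbounded, so this does not meet the "bounded" requirement in the statement. To get a \emph{bounded} example I instead take something that oscillates up and down by a bounded factor but with no uniform bound of the form $\beta_m \leq C\beta_n$ for $m \geq n$ — however boundedness of $\beta$ together with $\beta$ bounded below (which slow oscillation forces, since $\beta_n \gtrsim n^{-\alpha}$ only gives a decaying lower bound)... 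Actually slow oscillation does \emph{not} force $\beta$ bounded below by a constant. So take $\beta_n = 1$ except on sparse blocks where $\beta_n$ dips down to, say, $1/k$ on a block around $n \approx 2^{2^k}$, with the dip entered and exited gradually (ratios of consecutive terms within doubling windows staying in $[c, C]$ by spreading the descent over many octaves). This is bounded (by $1$), slowly oscillating (all local ratios controlled because we descend and ascend slowly), but not essentially decreasing: for $n$ at the bottom of the $k$-th dip and $m > n$ at the top of the next flat stretch, $\beta_m/\beta_n = k \to \infty$. I expect the main obstacle across the whole proposition to be this part: one must simultaneously (a) make the descent/ascent gradual enough for slow oscillation, which forces the dips to be geometrically wide, and (b) verify there is genuinely no constant $C$ with $\beta_m \leq C\beta_n$ for all $m \geq n$ — both are checkable but require choosing the block locations and depths compatibly, so I would write out the piecewise-log interpolation explicitly and verify the two Definitions directly.
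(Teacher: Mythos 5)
Your proposal is correct and follows essentially the same route as the paper for all three parts: the dyadic-chain iteration for $1)$, long plateaus separated by drops that grow without bound (the paper uses factorial blocks $k! < n \leq (k+1)!$ with value $(k!)^{-\delta}$) for $2)$, and gradual dips that recover (the paper's explicit sawtooth $\beta_n = a_k/n$ then $n/a_{k+1}$ with $a_k = 4^{k^2}$) for $3)$. The details you defer are routine and your self-corrections land on workable constructions, so there is no gap to report.
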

\begin{proof} 
$1)$ This is clear, because, for some $c\in(0,1)$, if $2^j \leq n < 2^{j + 1}$, then
\begin{displaymath} 
\beta_n \geq c \, \beta_{2^j} \geq c^{j + 1} \beta_1 \geq c\, \beta_1 \, n^{- \alpha} \, ,
\end{displaymath} 
with $\alpha = \log (1 / c) / \log 2$; and, for some $C>1$,
\begin{displaymath}
\beta_n \leq C \, \beta_{2^j} \leq C^{j + 1} \beta_1 \leq C\beta_1 \, n^\gamma \, ,
\end{displaymath}
with $\gamma = \log C / \log 2$.
\smallskip

$2)$ Let $\delta > 0$. We set $\beta_0 = \beta_1 = 1$ and for $n \geq 2$:
\begin{displaymath} 
\beta_n = \frac{1}{(k!)^\delta} \quad \text{when } k! < n \leq (k + 1)! \, .
\end{displaymath} 
The sequence $\beta$ is non-increasing.

For $n$ and $k$ as above, we have:
\begin{displaymath} 
\beta_n = \frac{1}{(k!)^\delta} \geq  \frac{1}{n^\delta} \, ;
\end{displaymath} 
hence $\beta$ has arbitrarily slow polynomial decay. However we have, for $k \geq 2$:
\begin{displaymath} 
\frac{\beta_{2 (k!)}}{\beta_{k!}} = \frac{(k!)^{- \delta}}{[(k - 1)!]^{- \delta}} = \frac{1}{k^{\delta}} \converge_{k \to \infty} 0 \, ,
\end{displaymath} 
so $\beta$ is not slowly oscillating.
\smallskip

$3)$ We define $\beta_n$ as follows. Let $(a_k)$ be an increasing sequence of positive square integers such that 
$\lim_{k \to \infty} a_{k + 1} / a_k = \infty$, for example $a_k = 4^{k^2}$, and let $b_k = \sqrt{a_k a_{k + 1}}\,$; with our choice, this is an integer and 
we clearly have $a_k < b_k < a_{k + 1}$. We set:
\begin{align*}
\beta_n = 
\left\{ 
\begin{array} {ll}
\, a_k / n & \text{for } a_k \leq n < b_k \\
& \\
(a_k / b_k^2) \, n = (1 / a_{k + 1}) \, n & \text{for } b_k \leq n < a_{k + 1} \, .
\end{array}
\right.
\end{align*}
This sequence $(\beta_n)$ is slowly oscillating by construction. 
Indeed, since the other cases are obvious, it suffices to check that for $a_k \leq n / 2 < b_k \leq n < a_{k + 1}$, the quotient $\beta_m / \beta_n$ remains lower 
and upper bounded when $n / 2 \leq m \leq n$ (it will then be automatically also satisfied when $n\leq m\le 2n$). But for $n / 2 \leq m < b_k$, we have
\begin{displaymath} 
\frac{\beta_m}{\beta_n} = \frac{a_k / m}{n / a_{k + 1}} = \frac{a_k a_{k + 1}}{m n} = \frac{b_k^2}{m n} \, \raise 1 pt \hbox{,}
\end{displaymath} 
which is $\leq 2 \, b_k^2 / n^2 \leq 2$ and $\geq b_k^2 / n^2 \geq (n / 2)^2 / n^2 = 1 / 4$; and for $b_k \leq m$, we have
\begin{displaymath} 
\frac{\beta_m}{\beta_n} = \frac{m / a_{k + 1}}{n / a_{k + 1}} = \frac{m}{n} \in \Big[ \frac{1}{2} \, \raise 1,5 pt \hbox{,} \, \raise 0.5 pt \hbox{$1$} \Big] \, .
\end{displaymath} 

However, though $(\beta_n)$ is bounded, since $\beta_n \leq 1$ for $a_k \leq n < b_k$ and, 
for $b_k \leq n < a_{k + 1}$, 
\begin{displaymath} 
\beta_n \leq \beta_{a_{k + 1} - 1} = \frac{1}{a_{k + 1}}\, (a_{k + 1} - 1) \leq 1 \, ,
\end{displaymath} 
it is not essentially decreasing, since
\begin{displaymath} 
\frac{\beta_{a_{k + 1} - 1}}{\beta_{b_k}} = \frac{1}{\sqrt{a_k a_{k + 1}}} \, (a_{k + 1} - 1)  \sim \sqrt{\frac{a_{k + 1}}{a_k}} 
\converge_{k \to \infty} \infty \, . \qedhere
\end{displaymath} 
\end{proof}
\bigskip

Now we are going to recall some well known facts about matrix representation of an operator $T$ defined on a Hilbert space with an orthonormal basis 
$(\e_n)_{n\ge0}$ and how it is translated in our framework.

The entry $a_{m,n}$ (where $m,n\ge0$) is defined by the $m^{th}$ coordinate of $T(\e_n)$:
$$a_{m,n}=\e_m^\ast\big(T(\e_n)\big)$$
where $\e_k^\ast(x)$ stands for the $k^{th}$ coordinate of the vector $x$.

We shall use the notation $\widehat{f}(k)$ for the $k^{th}$ Fourier coefficient of a function $f\in L^1(-\pi,\pi)$:
$$\widehat{f}(k)=\frac{1}{2\pi}\int_{-\pi}^\pi f(t)\,\e^{-ikt}\,dt\;.$$

Let us point out that when the operator is the composition operator $C_\phi$ associated to the symbol $\phi$, viewed on $H^2(\beta)$, its matrix representation 
in the basis $\big(e^\beta_{n}\big)_{n\ge0}$ has an entry $(m,n)$ which can be written as
$$(e^\beta_{m})^\ast\big(C_\phi(e^\beta_{n})\big) 
= \frac{\sqrt{\beta_m}}{\sqrt{\beta_n}}e_{m}^\ast\big(\phi^{n}\big)=\frac{\sqrt{\beta_m}}{\sqrt{\beta_n}}\,\widehat{\phi^n}(m)$$
%
since the $m^{th}$ Taylor coefficient of $\phi^n$ coincides with its $m^{th}$ Fourier coefficient.
\smallskip

We say that the reproducing  kernels $K_w$ have a \emph{slow growth} if 
\begin{equation} 
\| K_w \| \leq \frac{C}{(1 - |w|)^s} 
\end{equation} 
for positive constants $C$ and $s$. We have the following equivalence.

\begin{proposition} \label{reproducing kernel polynomial decay} 
The sequence $\beta$ has polynomial decay if and only if the reproducing kernels $K_w$ of $H^2 (\beta)$ have a slow growth.
\end{proposition}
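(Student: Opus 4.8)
The plan is to compute $\|K_w\|^2$ explicitly from \eqref{tres} and compare it with the growth of the scalar function $(1-|w|)^{-s}$ via elementary power-series estimates. Since the $e^\beta_n$ form an orthonormal basis, we have $\|K_w\|^2 = \sum_{n=0}^\infty \beta_n^{-1} |w|^{2n}$, which depends only on $r := |w| \in [0,1)$; write $S(r) := \sum_{n\ge0} \beta_n^{-1} r^{2n}$. The statement then amounts to: $\beta_n \geq c\,n^{-\alpha}$ for all $n\ge1$ (for some $c,\alpha>0$) if and only if $S(r) \leq C\,(1-r)^{-2s}$ for all $r\in[0,1)$ (for some $C,s>0$).

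For the forward direction, suppose $\beta_n \geq c\,n^{-\alpha}$. Then $\beta_n^{-1} \leq c^{-1} n^\alpha$ for $n\ge1$, so $S(r) \leq \beta_0^{-1} + c^{-1}\sum_{n\ge1} n^\alpha r^{2n}$. The tail is a standard series: $\sum_{n\ge1} n^\alpha x^n \asymp (1-x)^{-(\alpha+1)}$ as $x\to 1^-$ (one can see this by comparing with $\int_0^\infty t^\alpha x^t\,dt = \Gamma(\alpha+1)(\log(1/x))^{-(\alpha+1)}$ and using $\log(1/x)\asymp 1-x$ near $1$, or by the integer-ceiling bound $n^\alpha \le \lceil\alpha\rceil!\binom{n+\lceil\alpha\rceil}{\lceil\alpha\rceil}$ which sums exactly to $(1-x)^{-(\lceil\alpha\rceil+1)}$). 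Taking $x = r^2$ and noting $1 - r^2 = (1-r)(1+r) \ge 1-r$, we get $S(r) \lesssim (1-r)^{-(\alpha+1)}$, i.e. slow growth with $s = (\alpha+1)/2$.

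For the converse, suppose $S(r) \leq C\,(1-r)^{-2s}$. Dropping all but the $n$-th term gives $\beta_n^{-1} r^{2n} \leq S(r) \leq C\,(1-r)^{-2s}$, hence $\beta_n \geq C^{-1}\,r^{2n}\,(1-r)^{2s}$ for every $r\in[0,1)$. Now optimize in $r$: choose $r = r_n := 1 - 1/n$ (for $n\ge1$). Then $r_n^{2n} = (1-1/n)^{2n} \to e^{-2}$, so $r_n^{2n} \geq c_0 > 0$ uniformly in $n\ge1$, while $(1-r_n)^{2s} = n^{-2s}$. Therefore $\beta_n \geq (c_0/C)\,n^{-2s}$ for all $n\ge1$, which is polynomial decay with exponent $\alpha = 2s$. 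This direction is essentially immediate once the right value of $r$ is plugged in.

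The only mildly delicate point — the "main obstacle," though it is routine — is the two-sided asymptotic $\sum_{n\ge1} n^\alpha x^n \asymp (1-x)^{-(\alpha+1)}$ for non-integer $\alpha$, used in the forward direction; I would dispatch it by the binomial-coefficient upper bound (which needs only $\alpha \le \lceil\alpha\rceil$ and the identity $\sum_{n\ge0}\binom{n+k}{k}x^n = (1-x)^{-(k+1)}$) rather than by integral comparison, to keep everything self-contained. Everything else is bookkeeping with the relation $1-r^2 \asymp 1-r$ on $[0,1)$ and the freedom to absorb the $n=0$ term and adjust constants.
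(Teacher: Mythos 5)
Your proof is correct and follows essentially the same route as the paper: the necessity is obtained by keeping only the $n$-th term of $\|K_w\|^2$ and choosing $|w|=1-1/n$, and the sufficiency by bounding $\beta_n^{-1}\lesssim n^\alpha$ and summing $\sum n^\alpha |w|^{2n}\lesssim (1-|w|^2)^{-(\alpha+1)}$. The only difference is that you spell out the elementary estimate $\sum_{n\ge1} n^\alpha x^n \asymp (1-x)^{-(\alpha+1)}$, which the paper simply asserts.
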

\begin{proof}
Assume that the reproducing kernels have a slow growth. Since 
\begin{displaymath}
\| K_w \|^2 = \sum_{k = 0}^\infty \frac{\ | w |^{2 k}}{\beta_k} \, \raise 1 pt \hbox{,}
\end{displaymath}
we get, for any $k \geq 2$:
\begin{displaymath}
\frac{\ | w |^{2 k}}{\beta_k} \leq  \frac{C^2}{(1 - | w |)^{2 s}} \, \cdot
\end{displaymath}
Taking $w = 1 - \frac{1}{k}\,$, we obtain $\beta_k \geq C' \, k^{- 2 s}$.

For the necessity, we only have to see that:
\begin{displaymath} 
\| K_w \|^2 = \frac{1}{\beta_0} + \sum_{n = 1}^\infty \frac{\, \, \, |w|^{2 n}}{\beta_n} 
\leq \frac{1}{\beta_0} + \delta^{- 1} \sum_{n = 1}^\infty n^\alpha |w|^{2 n} \leq \frac{C}{(1 - |w|^2)^{\alpha  + 1}} \, \cdot \qedhere
\end{displaymath} 
\end{proof}
%

\section {Boundedness of composition operators whose symbol vanishes at the origin} \label{sec: phi (0) = 0} 

\subsection {Necessary conditions} 

We begin with this simple observation (see \cite[Proposition~3.1]{ZorboPgD}).

\begin{proposition}\label{simpleBoundVan}
If all composition operators with symbol vanishing at $0$ are bounded on $H^2 (\beta)$, then the sequence $\beta$ is bounded above.
\end{proposition}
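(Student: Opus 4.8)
The plan is to use the closed graph theorem together with a family of well-chosen symbols vanishing at the origin, so as to force a uniform control of $\beta_N$ in terms of $\beta_0$ (or some fixed $\beta_k$). First I would observe that if $C_\phi$ maps $H^2(\beta)$ into itself, then $C_\phi$ is automatically bounded: its graph is closed because norm convergence in $H^2(\beta)$ implies coefficientwise convergence, hence pointwise convergence on $\D$, and composition with $\phi$ is continuous for the topology of pointwise (indeed uniform on compacta) convergence. So from the hypothesis we get that $C_\phi$ is a bounded operator for every symbol with $\phi(0)=0$.

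Next I would test these operators on the simplest vectors, the monomials $e_N(z)=z^N$, using power symbols. Fix $N\ge 1$ and take $\phi(z)=z^j$ for an integer $j\ge 1$; this is a symbol fixing the origin. Then $C_\phi(e_N)=e_{jN}$, so $\|C_\phi\|^2 \ge \|e_{jN}\|^2/\|e_N\|^2 = \beta_{jN}/\beta_N$. This already gives $\beta_{jN}\le \|C_{z^j}\|^2\,\beta_N$ for each fixed $j$, but the bound depends on $j$, so by itself it only shows $\beta$ cannot grow too fast along each dilated subsequence — not yet boundedness. The point of the argument must therefore be to pick a single symbol, or a cleverly parametrized family, that reaches \emph{all} large indices.

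The cleanest way is to use a lacunary-type symbol. Consider $\phi(z)=z\,\psi(z)$ where $\psi$ is a suitable bounded analytic self-map of $\D$ — for instance one could build $\phi$ from an infinite Blaschke-type or power-series construction whose iterates/powers have Taylor coefficients spread across every index range; more concretely, one tests $C_\phi$ on a well-chosen $f=\sum a_n e_n\in H^2(\beta)$ and reads off a lower bound on $\beta_N$ from a single nonzero Taylor coefficient of $f\circ\phi$ at level $N$. Since $C_\phi$ is bounded, $\|f\circ\phi\|^2\le\|C_\phi\|^2\|f\|^2$, and isolating the $N$-th coefficient gives $|\widehat{f\circ\phi}(N)|^2\beta_N\le \|C_\phi\|^2\|f\|^2$; if one can arrange $\widehat{f\circ\phi}(N)$ to be bounded below uniformly in $N$ with $\|f\|$ fixed, one concludes $\beta_N\le C\|f\|^2\|C_\phi\|^2$ for all $N$, i.e. $\beta$ is bounded above.

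The main obstacle is exactly this last design problem: producing a single symbol $\phi$ fixing $0$ (or a finite/controlled family) together with a single test function $f\in H^2(\beta)$ such that $f\circ\phi$ has Taylor coefficients bounded away from $0$ at \emph{every} index $N$. The naive choices (a single power $z^j$, or $f=e_1$) only reach an arithmetic progression of indices. I expect the resolution to use a symbol like $\phi(z)=\lambda z$ composed with, or replaced by, something whose powers $\phi^n$ have overlapping coefficient supports covering $\N$, or alternatively to invoke $\sup_{\phi(0)=0}\|C_\phi\|$ only implicitly via the closed graph theorem applied to a fixed $\phi$ chosen so that $C_\phi$ is surjective-enough onto high-frequency monomials. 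Once such a $\phi$ is fixed, the inequality $\beta_N\le C$ for all $N$ is immediate, and the proposition follows; the delicate point is purely the combinatorial/function-theoretic construction of that one good symbol.
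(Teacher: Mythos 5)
There is a genuine gap, and the route you sketch cannot be completed. Your plan hinges on producing a symbol $\phi$ with $\phi(0)=0$ and a test function $f\in H^2(\beta)$ such that $|\widehat{f\circ\phi}(N)|\geq c>0$ for \emph{every} $N$; you explicitly leave this construction open as the "design problem". But no such pair can exist in general: since $f\circ\phi$ must itself lie in $H^2(\beta)$, the lower bound on its coefficients would force $c^2\sum_N\beta_N\leq\|f\circ\phi\|^2_{H^2(\beta)}<\infty$, i.e. $\sum_N\beta_N<\infty$. This already fails for $\beta_n\equiv 1$ (the Hardy space $H^2$, where the hypothesis of the proposition holds), and the same objection kills any finite family of pairs $(f_i,\phi_i)$ whose coefficient supports cover all indices. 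Your other attempts (the powers $z^j$, or letting the symbol vary with $N$) founder on the absence of a uniform bound on $\|C_\phi\|$ over symbols vanishing at $0$; such a uniform bound is a strictly stronger hypothesis, equivalent to $\beta$ being essentially decreasing (Proposition~\ref{phi(0)=0}), so it cannot be invoked here.

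The missing idea is to use the hypothesis only through the single vector $e_1(z)=z$: for every symbol $\phi$ with $\phi(0)=0$ one has $\phi=C_\phi(e_1)\in H^2(\beta)$. Since every $f\in H^\infty$ can be written as $f=A\,\phi+f(0)$ with $A$ a constant and $\phi$ such a symbol, this gives the set inclusion $H^\infty\subseteq H^2(\beta)$. The closed graph theorem, applied to this \emph{inclusion map} (its graph is closed because norm convergence in either space implies pointwise convergence on $\D$), yields a constant $M$ with $\|f\|_{H^2(\beta)}\leq M\,\|f\|_\infty$ for all $f\in H^\infty$; testing with $f=z^n$ gives $\beta_n\leq M^2$. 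The uniformity in $n$ thus comes from the closed graph theorem for the embedding, not from any uniform control of the composition operators themselves or from any coefficient estimate on $f\circ\phi$.
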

\begin{proof}
Let $f \in H^\infty$.  Write $f = A \, \phi +  f (0)$ where $A$ is a constant and $\phi$ a symbol vanishing at $0$. We have $\phi = C_\phi (z) \in H^2 (\beta)$, by 
hypothesis. So that $f \in H^2 (\beta)$ and $H^\infty \subseteq H^2 (\beta)$.  It follows (by the closed graph theorem, since the convergence in norm implies 
pointwise convergence) that there exists a constant $M$ such that $\| f \|_{H^2 (\beta)} \leq M \, \| f \|_\infty$ for all $f \in H^\infty$. Testing that with 
$f (z) = z^n$, we get $\beta_n \leq M^2$. 
\end{proof}
\smallskip

Let us point out that boundedness of $\beta_n$ does not suffice. For example, let $(\beta_n)$ be a sequence such that 
 $\beta_{4 k +2} / \beta_{2 k + 1} \converge_{k \to \infty} \infty$ (for instance $\beta_{2 k} = 1$ and 
$\beta_{2 k + 1} = 1 / (k + 1)$); if $\phi (z) = z^2$, then $\| C_\phi (z^{2 n + 1}) \|^2 = \| z^{2 (2 n + 1)} \|^2 = \beta_{2 (2 n + 1)}$; since 
$\| z^{2 n + 1} \|^2 = \beta_{2 n + 1}$, the operator $C_\phi$ is not bounded on $H^2 (\beta)$. 
\medskip

A partial characterization is given in the next proposition. 

\begin{proposition} \label{phi(0)=0}
The following assertions are equivalent:
\smallskip

$1)$ all symbols $\phi$ such that $\phi (0) = 0$ induce bounded composition operators $C_\phi$ on $H^2 (\beta)$ {\bf and} 
\begin{equation} \label{borne uniforme}
\sup_{\phi (0) = 0} \| C_\phi \| < \infty \, ;
\end{equation}

$2)$ $\beta$ is an essentially decreasing sequence.
\end{proposition}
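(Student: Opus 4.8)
The plan is to prove the two implications separately, the substance being entirely in $1)\Rightarrow 2)$.

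For $2)\Rightarrow 1)$, I would start from the remark following Definition~\ref{def: essentially decreasing}: if $\beta$ is essentially decreasing then $H^2(\beta)=H^2(\tilde\beta)$ with equivalent norms, $\tilde\beta_n=\sup_{k\ge n}\beta_k$ being non-increasing, so it is enough to bound $\|C_\phi\|$ on $H^2(\tilde\beta)$ uniformly over symbols with $\phi(0)=0$. One option is to quote Cowen's theorem recalled in the introduction and note that the bound it gives is uniform. A self-contained route goes through the Littlewood subordination principle, which gives $\|g\circ\phi\|_{H^2}\le\|g\|_{H^2}$ when $\phi(0)=0$: writing $\tilde\beta_n=\tilde\beta_\infty+\sum_{j\ge n}(\tilde\beta_j-\tilde\beta_{j+1})$ with $\tilde\beta_\infty=\lim_n\tilde\beta_n\ge0$ and all increments $\ge 0$, Abel summation yields
$$
\|f\|_{H^2(\tilde\beta)}^2=\tilde\beta_\infty\,\|f\|_{H^2}^2+\sum_{j\ge0}(\tilde\beta_j-\tilde\beta_{j+1})\,\|P_jf\|_{H^2}^2 ,
$$
with $P_j$ the Taylor projection of order $j$. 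Since $\phi(0)=0$ makes $(f-P_jf)\circ\phi$ divisible by $z^{j+1}$, one has $P_j(f\circ\phi)=P_j\big((P_jf)\circ\phi\big)$, so Littlewood applied to the polynomial $P_jf$ gives $\|P_j(f\circ\phi)\|_{H^2}\le\|P_jf\|_{H^2}$; plugging this (and $\|f\circ\phi\|_{H^2}\le\|f\|_{H^2}$) into the identity above applied to $f\circ\phi$ gives $\|C_\phi f\|_{H^2(\tilde\beta)}\le\|f\|_{H^2(\tilde\beta)}$, hence the uniform bound.

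For $1)\Rightarrow 2)$, set $M:=\sup_{\phi(0)=0}\|C_\phi\|<\infty$; I need a constant $C$ with $\beta_m\le C\beta_n$ for all $m\ge n$. Given $m\ge n\ge1$, I would write $m=Nn+L$ with $N=\lfloor m/n\rfloor\ge1$ and $0\le L\le n-1$, and use the symbol
$$
\phi(z)=\frac{n z^{N}+z^{N+L}}{n+1}=\frac{z^{N}}{1+\frac1n}\Big(1+\frac1n z^{L}\Big) .
$$
For $|z|\le1$ we have $|\phi(z)|\le\frac{n+1}{n+1}=1$ and $\phi(0)=0$, so $\phi$ is an admissible symbol (for $L=0$ it is just $z^{N}$). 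Expanding,
$$
\phi^{n}=\frac{z^{Nn}}{(1+\frac1n)^{n}}\Big(1+\frac1n z^{L}\Big)^{n}=\frac{1}{(1+\frac1n)^{n}}\sum_{k=0}^{n}\binom nk\frac{1}{n^{k}}\,z^{Nn+Lk} ,
$$
so for $L\ge1$ the only term at degree $m=Nn+L$ is the one with $k=1$, giving $\widehat{\phi^{n}}(m)=\big(1+\tfrac1n\big)^{-n}\ge e^{-1}$ (and $\widehat{\phi^{n}}(m)=1$ if $L=0$), a bound \emph{independent of $n$, $N$, $L$}. Then, by the matrix formula recalled in Section~\ref{sec: def},
$$
e^{-1}\sqrt{\beta_m/\beta_n}\le\big|\widehat{\phi^{n}}(m)\big|\sqrt{\beta_m/\beta_n}=\Big|(e^\beta_m)^{\ast}\big(C_\phi(e^\beta_n)\big)\Big|\le\|C_\phi\|\le M ,
$$
whence $\beta_m\le e^{2}M^{2}\beta_n$. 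The pairs with $n=0$ only introduce the extra finite factor $\max(1,\beta_1/\beta_0)$, so $\beta$ is essentially decreasing with $C=e^{2}M^{2}\max(1,\beta_1/\beta_0)$.

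The one delicate point is the choice of symbol in $1)\Rightarrow 2)$: I need a symbol fixing $0$ whose $n$-th power has a \emph{single} Taylor coefficient, at a prescribed index $\ge n$, bounded below uniformly in $n$. Symbols built from a few well-separated monomials do not work, since the coefficients of their $n$-th powers spread out and decay at least like $n^{-1/2}$; the two-term symbol above works precisely because the perturbation size $1/n$ is tuned so that $(1+1/n)^{n}$ stays $<e$. Everything else is routine.
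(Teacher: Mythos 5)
Your proof is correct, and both implications follow the same overall strategy as the paper's, with two differences in implementation worth recording. For $2)\Rightarrow 1)$ the paper simply invokes the Goluzin--Rogosinski theorem, which is precisely the partial-sum inequality $\sum_{k\le j}|d_k|^2\le\sum_{k\le j}|c_k|^2$ that you re-derive from scratch via the identity $P_j(f\circ\phi)=P_j\bigl((P_jf)\circ\phi\bigr)$ together with Littlewood applied to the polynomial $P_jf$; your version is self-contained, and the Abel-summation step is then the same as in the paper (just note that all terms in your identity are nonnegative, so Tonelli justifies the rearrangement, and adopt the convention $0\cdot\infty=0$ for the $\tilde\beta_\infty$ term when $\tilde\beta_\infty=0$). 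For $1)\Rightarrow 2)$ the paper tests $C_\phi$ on $e_n$ with the symbol $\phi_{m,n}(z)=z\,\bigl(\frac{1+z^{m-n}}{2}\bigr)^{1/n}$, whose $n$-th power is exactly $\frac{z^n+z^m}{2}$, so that $\frac14(\beta_n+\beta_m)=\|C_{\phi}e_n\|^2\le M^2\beta_n$ immediately; your symbol $\frac{nz^{N}+z^{N+L}}{n+1}$ achieves the same effect (a single Taylor coefficient of $\phi^n$ at degree $m$ bounded below by $\e^{-1}$) without having to take an $n$-th root of a zero-free function, at the price of the binomial expansion and the case distinction $L=0$ versus $L\ge1$. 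Both constructions are valid; the paper's is slightly cleaner because $\phi^n$ is literally a two-term polynomial, which makes the norm computation one line, while yours is more elementary in that it only uses polynomial symbols.
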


Of course, by the uniform boundedness principle, \eqref{borne uniforme} is equivalent to:
\begin{displaymath} 
\qquad \qquad \sup_{\phi (0) = 0} \| f \circ \phi \| < \infty \quad \text{for all } f \in H^2 (\beta) \, .
\end{displaymath} 

Let us point out an important fact: we shall see in Theorem~\ref{theo example} that there are weights $\beta$ for which all composition operators $C_\phi$ with 
$\phi (0) = 0$ are bounded, but $\sup_{\phi (0) = 0} \| C_\phi \| = + \infty$.

\begin{proof}
$2) \Rightarrow 1)$ We may assume that $\beta$ is non-increasing. Then the Goluzin-Rogosinski theorem (\cite[Theorem~6.3]{Duren}) gives the result; 
in fact, writing $f (z) = \sum_{n = 0}^\infty c_n z^n$ and $(C_\phi f) (z) = \sum_{n = 0}^\infty d_n z^n$, it says that

$$\sum_{0\leq k\leq n} |d_k|^2\leq  \sum_{0\leq k\leq n} |c_k|^2\quad  \forall n\geq 0,$$
and hence, by Abel summation:
\begin{displaymath} 
\| C_\phi f \|^2 = \sum_{n = 0}^\infty |d_n|^2 \beta_n \leq \sum_{n = 0}^\infty |c_n|^2 \beta_n = \| f \|^2 \, ,
\end{displaymath}
%
%
leading to $C_\phi$ bounded and $\| C_\phi \| \leq 1$. This result was also proved by C.~Cowen \cite[Corollary of Theorem~7]{Cowen}. Alternatively, 
we can use a result of Kacnel'son (\cite{Kacnelson}; see also \cite{Isabelle}, \cite[Corollary~2.2]{IsabelleII}, or \cite[Theorem~3.12]{LLQR-comparison}). 

$1) \Rightarrow 2)$ Set $M = \sup_{\phi (0) = 0} \| C_\phi \|$. Let $m > n$, and take
\begin{displaymath}
\phi (z) = \phi_{m, n} (z) = z \, \bigg( \frac{1 + z^{m - n}}{2} \bigg)^{1 / n} \, .
\end{displaymath}
Then $\phi (0) = 0$ and  $[\phi (z)]^n = \frac{z^n + z^m}{2}\,$; hence
\begin{displaymath} 
\frac{1}{4} \, (\beta_n + \beta_m)  = \| \phi^n \|^2 = \| C_\phi (e_n) \|^2 \leq \| C_\phi \|^2 \| e_n \|^2 \leq M^2 \, \beta_n \, ,
\end{displaymath} 
so $\beta$ is essentially decreasing.
\end{proof}

\noindent {\bf Remark.} Let us mention the following example. For $0 < r < 1$, let $\beta_n = \pi \,  n \, r^{2 n}$, for $n\ge1$ and $\beta_0=1$. This sequence 
is eventually decreasing, so it is essentially decreasing. The quantity $\| f \|^2_{H^2 (\beta)}-|f(0)|^2$ is the area of the part of the Riemann surface on which 
$r \D$ is mapped by $f$. E.~Reich \cite{Reich}, generalizing Goluzin's result \cite{Goluzin} (see \cite[Theorem~6.3]{Duren}), proved that for all symbols 
$\phi$ such that $\phi (0) = 0$, the composition operator $C_\phi$ is bounded on $H^2 (\beta)$ and
\begin{displaymath} 
\| C_\phi \| \leq \sup_{n \geq 1} \sqrt{n} \, r^{n - 1} \leq \frac{1}{\sqrt{2 \, \e}} \, \frac{1}{r \sqrt{\log (1 / r)}} \, \cdot
\end{displaymath} 
For $0 < r < 1 / \sqrt{2}$, Goluzin's theorem asserts that $\|C_\phi \| \leq 1$.
\smallskip

Note that this sequence $\beta$ is not slowly oscillating since $\beta_{2 n} / \beta_n = 2 \, r^{2 n}$. Hence, from 
Theorem~\ref{theo CN T_a beta SO} below, we get that no composition operator $C_{T_a}$ is bounded on $H^2(\beta)$. 
\medskip

However, that the weight $\beta$ is essentially decreasing is not necessary for the boundedness of all composition operators $C_\phi$,  
with symbol $\phi$ vanishing at $0$, as we will see later (Theorem~\ref{theo example}). 

\goodbreak
\subsection {Sufficient condition} \label{subsec: CS phi (0) = 0} 

\begin{theorem} \label{theo cond suff}
Let $\beta = (\beta_n)_{n = 0}^\infty$ be a sequence of positive numbers with polynomial decay and growth.
Assume that $\beta$ is \emph{weakly decreasing}, i.e.:
\begin{equation} \label{cond Luis}
\begin{split}
\text{For every } \delta > 0, & \text{ there exists } \text{a positive constant } C = C (\delta) \text{ such that } \\
& \hskip - 0.8 em \beta_m  \leq C \, \beta_n \quad \text{whenever } m > (1 + \delta) \, n \, .
\end{split}
\end{equation}

Then, for all symbols $\phi \colon \D \to \D$ vanishing at $0$, the composition operator $C_\phi$ is bounded on $H^2 (\beta)$.
\end{theorem}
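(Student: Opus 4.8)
The plan is to reduce boundedness of $C_\phi$ to a coefficient inequality and to exploit the Littlewood--Rogosinski subordination principle — which already dealt with the essentially decreasing case above — refined as far as the polynomial decay of $\beta$ permits. Write $f(z)=\sum_{n\ge0}c_nz^n$ and $(C_\phi f)(z)=f(\phi(z))=\sum_{n\ge0}d_nz^n$; since $f\circ\phi$ is analytic on $\D$, it suffices to show $\sum_n|d_n|^2\beta_n\le K\sum_n|c_n|^2\beta_n$ with $K=K(\beta)$. As $\phi$ is a self-map of $\D$ fixing $0$, $f\circ\phi$ is subordinate to $f$; applying this to the dilates $f_r(z)=f(rz)\in H^\infty$ and letting $r\to1$, the Rogosinski refinement of Littlewood's principle (\cite[Theorem~6.2]{Duren}) gives $\sum_{k\le n}|d_k|^2\le\sum_{k\le n}|c_k|^2$ for all $n$. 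By Abel summation this partial-sum domination survives multiplication by any non-increasing sequence, so $\sum_{k\le n}|d_k|^2k^{-s}\le\sum_{k\le n}|c_k|^2k^{-s}$ for every $s\ge0$ and all $n$. Taking $s=\alpha$ and $n\to\infty$, together with $\beta_k\ge ck^{-\alpha}$, this is just the (already known) boundedness of $C_\phi$ on the ambient weighted Bergman space $\mathfrak B^2_{\alpha-1}\supseteq H^2(\beta)$; the substance of the theorem is to \emph{upgrade} it to $C_\phi f\in H^2(\beta)$.

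For the upgrade I would group frequencies into blocks adapted to the geometry rather than the size of $\beta$: fix a large ratio $\lambda$, put $J_k=[\lambda^k,\lambda^{k+1})\cap\Z$, and note that \eqref{cond Luis} applied with $\delta$ a fixed power of $\lambda$ produces a fixed integer $g$ and a constant $C$ with $\beta_m\le C\beta_n$ whenever $m\in J_k$, $n\in J_{k-j}$, $j\ge g$. The partial-sum domination passes to the block sums $\Sigma_k=\sum_{n\in J_k}|d_n|^2$ and $T_k=\sum_{n\in J_k}|c_n|^2$, giving $\sum_{k'\le k}\Sigma_{k'}\le\sum_{k'\le k}T_{k'}$. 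One then estimates $\|f\circ\phi\|_\beta^2=\sum_k\sum_{n\in J_k}|d_n|^2\beta_n$ in two stages: an Abel summation \emph{between} blocks, mimicking the non-increasing-weight computation already used for essentially decreasing $\beta$ with the bounded gap $g$ absorbed into the constant; and, \emph{inside} each block, a finer estimate that cannot use boundedness of the oscillation of $\beta$ there — which for a merely weakly decreasing $\beta$ may fail — but instead relies on the full family of weighted subordination inequalities above, together with the factorisation $f\circ\phi=\sum_n c_nz^n\psi^n$, $\psi=\phi/z$, $\|\psi\|_\infty\le1$, which prevents the coefficient mass of $f\circ\phi$ on $J_k$ from sitting where $\beta$ is much larger than on the corresponding block of $f$.

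The main obstacle is exactly this ``within-block'' step. Because $\beta$ is only weakly decreasing, the block-maxima $\sup_{J_k}\beta$ need not form an essentially decreasing sequence: $\beta$ may drop to its polynomial floor $\asymp\lambda^{-k\alpha}$ and climb back up to $\sup_n\beta_n$, subject only to such a recovery being spread over a multiplicatively long range, and on these increasing stretches the plain Abel argument that gives $\|C_\phi\|\le1$ for non-increasing weights breaks down; the loss has to be recouped from the arithmetic structure of $(d_n)$ as Taylor coefficients of $f\circ\phi$ — the subordination inequalities in every metric $k^{-s}$ and the factorisation $\phi=z\psi$ — and arranging that this gain exactly offsets the growth permitted by \eqref{cond Luis} is the technical core of the proof. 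Note that the three hypotheses serve distinct roles and form a convenient sufficient condition rather than a necessary one: weak decrease is the genuine obstruction-type condition (and already forces $\beta$ bounded above, which by Proposition~\ref{simpleBoundVan} is in any case necessary), the polynomial decay only provides the ambient Bergman space, and polynomial growth is automatic since a weakly decreasing sequence is bounded. It is this flexibility that will later allow a weight for which all $C_\phi$ with $\phi(0)=0$ are bounded while $\sup_{\phi(0)=0}\|C_\phi\|=+\infty$ (Theorem~\ref{theo example}).
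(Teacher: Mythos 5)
There is a genuine gap: your proposal correctly locates the difficulty (the stretches where $\beta$ climbs back up, on which the Abel-summation/Rogosinski argument for non-increasing weights breaks down), but it stops exactly there. You say the loss ``has to be recouped from the arithmetic structure of $(d_n)$'' and that arranging this ``is the technical core of the proof'' --- that core is precisely what is missing. The family of weighted subordination inequalities $\sum_{k\le n}|d_k|^2k^{-s}\le\sum_{k\le n}|c_k|^2k^{-s}$ carries no information beyond the single Rogosinski inequality (each is an Abel-summation consequence of it), and partial-sum domination alone cannot control where inside a block the mass of $(d_n)$ sits, which is exactly what you need on an increasing stretch of $\beta$. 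So the plan, as written, cannot be closed.

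The missing quantitative input is extracted from the factorisation $\phi=z\psi$ that you mention but do not exploit. After disposing of the trivial case $|\phi'(0)|=1$, one has $M(r)=\sup_{|z|=r}|\psi(z)|<1$, and Cauchy's inequalities give $|\hat{\phi^n}(m)|=|\hat{\psi^n}(m-n)|\le M(r)^n r^{n-m}=r^{(1+\rho)n-m}$ with $M(r)=r^\rho$, $\rho>0$. Hence the matrix entries $\hat{\phi^n}(m)$ are \emph{exponentially small} in the whole critical band $n\le m\le(1+\delta)n$ with $\delta=\rho/2$ --- this is where the coefficient mass would have to sit for $\beta$'s local growth to hurt, and it simply isn't there: that band, even after multiplication by $\sqrt{\beta_m/\beta_n}\lesssim n^{\alpha}$ (here is where polynomial decay is genuinely used, not merely to provide an ambient Bergman space), is Hilbert--Schmidt. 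The remaining part of the matrix lives on $m>(1+\delta)n$, where \eqref{cond Luis} lets one replace $\beta$ by the equivalent weight $\gamma_n=\max\{\beta_n,\sup_{m>(1+\delta)n}\beta_m\}$, which is non-increasing along exactly those pairs; a Kacnel'son-type conjugation lemma (proved by the maximum principle on $z\mapsto D^{-z}AD^{z}$, or by a Fourier-transform/Schur-multiplier argument) then shows this part is bounded by $\|C_\phi\|_{H^2\to H^2}$. Your block decomposition and ``between-blocks'' Abel summation are a reasonable shadow of this second half, but without the exponential decay in the band $n\le m\le(1+\delta)n$ there is no way to carry out the ``within-block'' step, and no subordination argument supplies it.
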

Let us point out that \eqref{cond Luis} implies that $\beta$ is bounded. 
For a counterexample with an exponential weight, see \cite[Ex.~1, p.~14-15]{ZorboPgD}.
\smallskip

To prove Theorem~\ref{theo cond suff}, we need several lemmas.

\begin{lemma} \label{lemma 1}
Let $\phi \colon \D \to \D$ be an analytic self-map such that $\phi (0) = 0$ and $| \phi ' (0) | < 1$. Then there exists $\rho > 0$ such that, for every integers 
$n$ and $m$, 
\begin{displaymath}
|\hat{\phi^n} (m) | \leq \exp \Big( - \frac{1}{2} \, [(1 + \rho) \, n - m] \Big) \, .
\end{displaymath}
\end{lemma}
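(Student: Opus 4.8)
The plan is to exploit the contractive behavior of $\phi$ near the origin, coming from the Schwarz lemma together with $|\phi'(0)| < 1$. Since $\phi(0) = 0$ and $\phi \colon \D \to \D$, the Schwarz lemma gives $|\phi(z)| \le |z|$ on $\D$; but the assumption $|\phi'(0)| < 1$ lets us do better on a slightly enlarged disk. The key geometric fact I would establish first is: there exist $r_0 \in (0,1)$ with $r_0 > 1$ (wait — I mean a radius strictly bigger than what a naive bound gives) and a constant $q < 1$ such that $|\phi(z)| \le q$ whenever $|z| \le r_0$, with $r_0$ chosen so that $r_0 > 1$ is impossible; rather, the right statement is that there is $R > 1$ and $\theta \in (0,1)$ with $\sup_{|z| = R'} |\phi(z)| \le \theta R'$ for some $R' $ slightly larger than where one would expect — concretely, because the function $z \mapsto \phi(z)/z$ extends analytically to $\D$ with value $\phi'(0)$ at $0$ and modulus $\le 1$, and $|\phi'(0)| < 1$, by the maximum principle and continuity there is $\rho > 0$ and $r \in (0,1)$ close enough to $1$ so that $|\phi(z)| \le e^{-(1+\rho)/2}\,$ actually this is cleaner via Cauchy estimates, which I turn to now.

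The concrete execution: write $\phi^n(z) = \sum_m \hat{\phi^n}(m) z^m$, so by the Cauchy integral formula, for any $r \in (0,1)$,
\[
|\hat{\phi^n}(m)| \le \frac{\max_{|z| = r} |\phi(z)|^n}{r^m}.
\]
Now I need a good bound on $M(r) := \max_{|z|=r} |\phi(z)|$. The function $g(z) = \phi(z)/z$ is analytic on $\D$, $|g| \le 1$ there (Schwarz), and $|g(0)| = |\phi'(0)| < 1$; set $a := |\phi'(0)| < 1$. By the Schwarz–Pick lemma applied to $g$, or simply by the Borel–Carathéodory / Schwarz lemma for $g$, one gets $|g(z)| \le \frac{a + |z|}{1 + a|z|}$ on $\D$, hence $M(r) \le r \cdot \frac{a+r}{1+ar}$. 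Since $\frac{a+r}{1+ar} < 1$ for $r < 1$ (as $a < 1$), we can pick $r$ close to $1$ and a constant $\kappa = \kappa(a,r) < 1$ with $M(r) \le \kappa r$. Choose $r$ so close to $1$ that simultaneously $\kappa r \le e^{-(1+\rho)/2}\, r$ — i.e. define $\rho > 0$ by $e^{-(1+\rho)/2} = \kappa$ wait, that forces a specific relation; better: fix $r$ first with $1/r \ge e^{1/2}\cdot(\text{something})$... The clean choice: take $r \in (0,1)$ close enough to $1$ that $1/r \le e^{1/2}$ is false — so instead, note we want $M(r)^n / r^m \le e^{-\frac12[(1+\rho)n - m]}$, i.e. $M(r)^n \le e^{-(1+\rho)n/2}$ and $r^{-m} \le e^{m/2}$, i.e. $r \ge e^{-1/2}$ and $M(r) \le e^{-(1+\rho)/2}$. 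Both can be arranged: pick $r = e^{-1/2} < 1$ if $M(e^{-1/2}) < 1$ — it is, being $\le \kappa e^{-1/2} < e^{-1/2} < 1$ — then $M(r) \le \kappa e^{-1/2}$, and since $\kappa < 1$ we have $\kappa e^{-1/2} = e^{-(1+\rho)/2}$ for $\rho := -1 - 2\log(\kappa e^{-1/2}) = -2\log\kappa > 0$. Then
\[
|\hat{\phi^n}(m)| \le \frac{M(r)^n}{r^m} \le \frac{e^{-(1+\rho)n/2}}{e^{-m/2}} = \exp\!\Big(-\tfrac12[(1+\rho)n - m]\Big),
\]
which is exactly the claimed bound.

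The main obstacle, and the only point requiring care, is producing the uniform contraction constant $M(r) \le \kappa r$ with $\kappa < 1$ from the hypothesis $|\phi'(0)| < 1$ — i.e. upgrading the Schwarz bound $M(r) \le r$ to a strict one on a fixed circle. I expect to handle this via the Schwarz–Pick estimate $|\phi(z)/z| \le \frac{|\phi'(0)| + |z|}{1 + |\phi'(0)||z|}$ for $z \in \D$ (which follows by applying Schwarz's lemma to the analytic function $z \mapsto \phi(z)/z$ composed with a disk automorphism, or is standard); one must verify this auxiliary inequality carefully since $\phi(z)/z$ has modulus exactly $|\phi'(0)|$ at $0$ and the ordinary Schwarz lemma does not directly apply. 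Everything after that is the routine Cauchy-estimate bookkeeping sketched above, and the value of $\rho$ produced is then explicit in terms of $|\phi'(0)|$.
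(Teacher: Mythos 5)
Your argument is correct and is essentially the paper's proof: both rest on the Cauchy estimate $|\hat{\phi^n}(m)|\le M(r)^n/r^m$ at the radius $r=\e^{-1/2}$, together with the fact that $\phi_1(z)=\phi(z)/z$ maps $\D$ into $\D$ so that $\sup_{|z|=r}|\phi_1(z)|<1$. The only cosmetic difference is that the paper writes this supremum as $r^\rho$ and applies Cauchy to $\phi_1^n$ at index $m-n$, whereas you apply Cauchy to $\phi^n$ directly and quantify the contraction via Schwarz--Pick, which is more explicit than needed but perfectly valid.
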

\begin{proof}
Since $\phi (0) = 0$, we can write $\phi (z) = z\, \phi_1 (z)$. Since $| \phi ' (0) | < 1$, we have $\phi_1 \colon \D \to \D$. Let $M (r) = \sup_{|z| = r} | \phi_1 (z) |$. 
Cauchy's inequalities say that $|\hat {\phi_1^n} (m)| \leq [M (r)]^n / r^m$. We have $M (r) < 1$, so there exists a positive number $\rho = \rho (r)$ such that 
$M (r) = r^\rho$. We get:
\begin{displaymath}
|\hat {\phi^n} (m)| = |\hat {\phi_1^n} (m - n)| \leq \frac{r^{\rho n}}{r^{m - n}} = r^{(1 + \rho)\, n - m} \, ,
\end{displaymath}
and the result follows, by taking $r = \e^{- 1 / 2}$.
\end{proof}

The next lemma is a variant of the following result of V.~\`E~Kacnel'son (\cite{Kacnelson}; see also \cite{Isabelle}, \cite[Corollary~2.2]{IsabelleII}, 
or \cite[Theorem~3.12]{LLQR-comparison}).
\begin{theorem} [V.~\`E.~Kacnel'son] \label{theo Kacnelson} 
Let $H$ be a separable complex Hilbert space and $(e_i)_{i \geq 0}$ a fixed orthonormal basis of $H$. 
Let $M \colon H \to H$ be a bounded linear operator. We assume that the matrix of $M$ with respect to this basis is lower-triangular: 
$\langle M e_j \mid e_i \rangle = 0$ for $i < j$.

Let $(\gamma_j)_{j \geq 0}$ be a non-decreasing sequence of positive real numbers and $\Gamma$ the (possibly unbounded) diagonal operator such that 
$\Gamma (e_j) = \gamma_j e_j$, $j \geq 0$. Then the operator $\Gamma^{ - 1} M \, \Gamma \colon H \to H$ is bounded and moreover:
\begin{displaymath} 
\| \Gamma^{- 1} M \, \Gamma \| \leq \| M \| \, .
\end{displaymath}
\end{theorem}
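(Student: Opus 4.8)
The plan is to reduce, by homogeneity, to the case $\|M\| \leq 1$, and then prove $\|\Gamma^{-1} M \Gamma\| \leq 1$. First I would note that, since $(\gamma_j)$ is non-decreasing and positive, it is $\Gamma^{-1}$ (with diagonal entries $\gamma_j^{-1} \leq \gamma_0^{-1}$) that is \emph{bounded}, while $\Gamma$ is the possibly unbounded one; hence $\Gamma^{-1} M \Gamma$ is well defined on the dense domain $\mathrm{dom}\,\Gamma$ (which contains all finitely supported vectors), and it will be enough to show $\|\Gamma^{-1} M \Gamma x\| \leq \|x\|$ for every finitely supported $x$, the operator then extending boundedly with the announced norm bound. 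Setting $y = \Gamma x$ — which again ranges over all finitely supported vectors — and $D = \Gamma^{-2}$, the diagonal operator with entries $d_j = \gamma_j^{-2}$, the inequality to establish becomes
\begin{displaymath}
\langle D (M y) \mid M y \rangle \leq \langle D y \mid y \rangle ,
\end{displaymath}
where $D$ is bounded and its entries $d_j$ are non-increasing, positive, with limit $d_\infty := \lim_j d_j \geq 0$.

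I would then assemble two ingredients. \textbf{(i)} For $k \geq 0$ let $Q_k$ be the orthogonal projection onto $\mathrm{span}(e_0, \dots, e_k)$. Because the matrix of $M$ is lower-triangular, the first $k+1$ coordinates of $Mu$ depend only on the first $k+1$ coordinates of $u$; in operator form $Q_k M = Q_k M Q_k$, whence
\begin{displaymath}
\| Q_k M u \| = \| Q_k M Q_k u \| \leq \| M \| \, \| Q_k u \| \leq \| Q_k u \| \qquad (u \in H).
\end{displaymath}
\textbf{(ii)} The Abel summation $d_j = d_\infty + \sum_{k \geq j} (d_k - d_{k+1})$, in which every increment $d_k - d_{k+1}$ is $\geq 0$, gives, for any $u \in H$ (the interchange of the two non-negative summations being legitimate by Tonelli, and the total being finite since $D$ is bounded),
\begin{displaymath}
\langle D u \mid u \rangle = \sum_{j \geq 0} d_j\, |u_j|^2 = d_\infty \, \| u \|^2 + \sum_{k \geq 0} (d_k - d_{k+1}) \, \| Q_k u \|^2 .
\end{displaymath}

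Applying this identity to $u = My$ and to $u = y$, and comparing term by term — using $\|My\| \leq \|y\|$ and $\|Q_k My\| \leq \|Q_k y\|$ from (i), both legitimate because each coefficient $d_k - d_{k+1}$ is non-negative — I would conclude
\begin{displaymath}
\langle D(My) \mid My \rangle = d_\infty \| My \|^2 + \sum_{k \geq 0} (d_k - d_{k+1}) \| Q_k My \|^2 \leq d_\infty \| y \|^2 + \sum_{k \geq 0} (d_k - d_{k+1}) \| Q_k y \|^2 = \langle D y \mid y \rangle ,
\end{displaymath}
which is exactly what is needed. In finite dimensions the same computation applies verbatim, with $d_\infty$ replaced by the smallest diagonal entry (or by $0$ after extending the sequence by $d_N = 0$).

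I expect the only genuinely delicate point to be organizing the argument correctly in infinite dimensions: one should \emph{resist} the tempting reduction to finite matrices by compressing $M$ to $\mathrm{span}(e_0, \dots, e_{N-1})$, because a lower-triangular operator does not map such an initial subspace into itself (it sends $e_j$ into $\mathrm{span}(e_j, e_{j+1}, \dots)$), so the compressed operators only control an initial block of $\Gamma^{-1} M \Gamma x$ and miss a positive tail — the inequality would then come out the wrong way. Decomposing instead the \emph{bounded} diagonal operator $\Gamma^{-2}$ along the increasing family of head-projections $Q_k$, and exploiting $Q_k M = Q_k M Q_k$, circumvents this entirely; the remaining work (convergence of the rearranged series, density of the finitely supported vectors, the passage to the bounded extension) is routine.
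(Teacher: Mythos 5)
Your proposal is correct, and it takes a genuinely different route from the paper. The paper never proves Theorem~\ref{theo Kacnelson} directly: it proves the variant Lemma~\ref{lemma 2} (stating that the proof of the theorem is the same) and offers two arguments, one via the subharmonicity of $z \mapsto \| D^{-z} A_N D^{z} \|$ on the right half-plane plus the maximum principle (a Phragm\'en--Lindel\"of-type interpolation between the unitary boundary values and the point $z=1$), and one via writing $\e^{-|x|}$ as the Fourier transform of $t \mapsto 1/(\pi(1+t^2))$ and averaging unitarily modulated copies of $A$. You instead give a direct, real-variable summation-by-parts proof: lower-triangularity yields $Q_k M = Q_k M Q_k$ for the head projections $Q_k$, hence the partial-sum inequalities $\| Q_k M u \| \leq \| M \|\, \| Q_k u \|$, and the decomposition of the non-increasing weights $\gamma_j^{-2}$ into non-negative increments converts these into the required quadratic-form inequality $\langle \Gamma^{-2} M y \mid M y \rangle \leq \| M \|^2 \langle \Gamma^{-2} y \mid y \rangle$; all the steps (the identity $Q_k M = Q_k M Q_k$, the Tonelli interchange, the reduction to finitely supported vectors and the identification of the bounded extension on $\mathrm{dom}\,\Gamma$) check out. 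This is essentially the Rogosinski/Abel-summation mechanism the paper itself uses in the proof of Proposition~\ref{phi(0)=0}, $2) \Rightarrow 1)$, promoted from the composition operator to an arbitrary lower-triangular bounded operator; it is more elementary and self-contained than either of the paper's proofs. What it buys less of: as written it relies on the monotonicity of $(\gamma_j)$, so it does not immediately cover Lemma~\ref{lemma 2}, where $(d_n)$ is arbitrary and only the gap condition $d_m < d_n \Rightarrow a_{m,n} = 0$ holds (though a layer-cake version of your decomposition, with $Q_k$ replaced by the projections onto $\mathrm{span}\{ e_m \tq d_m^{-2} > s \}$, for which the gap condition again gives $P A = P A P$, would extend it); conversely, the paper's first proof yields the extra information $\| D^{-z} A D^{z} \| \leq \| A \|$ for all $\Re z \geq 0$.
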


This variant is used implicitly in \cite[page~13]{LLQR-comparison}.

\begin{lemma} \label{lemma 2}
Let $A \colon \ell_2 \to \ell_2$ be a bounded operator represented by the matrix $\big( a_{m, n} \big)_{m, n}$, i.e. 
$a_{m, n} = \langle A \, e_n, e_m \rangle$, where $(e_n)_{n \geq0}$ is the canonical basis of $\ell_2$. 

Let $(d_n)$ be a sequence of positive numbers such that, for every $m$ and $n$:
\begin{equation} \label{cond trou}
d_m < d_n \quad \Longrightarrow \quad a_{m, n} = 0 \, .
\end{equation}

Then, $D$ being the (possibly unbounded) diagonal operator with entries $d_n$, we have:
\begin{displaymath}
\| D^{- 1} A D \| \leq \| A \| \, .
\end{displaymath}
\end{lemma}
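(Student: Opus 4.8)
The plan is to reduce the statement to the triangular case covered by Kacnel'son's theorem (Theorem~\ref{theo Kacnelson}) by a suitable reordering of the canonical basis. First I would dispose of the degenerate features of the hypothesis: since the $d_n$ are only assumed positive (not distinct, not monotone), I reindex $\N$ by a permutation $\pi$ so that the sequence $n \mapsto d_{\pi(n)}$ is non-decreasing; such a $\pi$ exists because any sequence of positive reals can be rearranged into non-decreasing order when, say, each value is attained finitely often, and in the general case one can still choose an enumeration making it non-decreasing provided the sequence is bounded — and if it is unbounded one argues by an exhaustion/limiting procedure, or simply notes that the conclusion $\|D^{-1}AD\|\le\|A\|$ is a statement about the action of $D^{-1}AD$ on finitely supported vectors, so it suffices to treat each finite truncation where only finitely many values of $d_n$ are involved. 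I would spell this out carefully, because this bookkeeping is exactly where a careless argument would go wrong.

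Next, let $f_n = e_{\pi(n)}$, another orthonormal basis of $\ell_2$, and let $U$ be the unitary with $U e_n = f_n$. The operator $\widetilde A := U^* A U$ has matrix $\widetilde a_{m,n} = \langle A f_n, f_m\rangle = a_{\pi(m),\pi(n)}$ in the canonical basis. Set $\gamma_n := d_{\pi(n)}$, a non-decreasing sequence of positive reals, and let $\Gamma$ be the corresponding diagonal operator ($\Gamma e_n = \gamma_n e_n$). The key point is that $\widetilde A$ is lower-triangular relative to $(e_n)$: if $m < n$ then $\gamma_m \le \gamma_n$; if actually $\gamma_m < \gamma_n$, i.e. $d_{\pi(m)} < d_{\pi(n)}$, then hypothesis \eqref{cond trou} forces $a_{\pi(m),\pi(n)} = 0$, hence $\widetilde a_{m,n} = 0$. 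The remaining possibility is $\gamma_m = \gamma_n$ with $m < n$, where the entry need not vanish; this is the one gap, and it is handled by the standard device of perturbing to strict monotonicity: replace $\gamma_n$ by $\gamma_n' = \gamma_n + \eps n$ for small $\eps > 0$ (still non-decreasing, now strictly increasing), apply Kacnel'son to get $\|(\Gamma')^{-1}\widetilde A\,\Gamma'\| \le \|\widetilde A\| = \|A\|$, and let $\eps \to 0$; on finitely supported vectors $(\Gamma')^{-1}\widetilde A\,\Gamma' \to \Gamma^{-1}\widetilde A\,\Gamma$ entrywise, so the bound passes to the limit. Alternatively, and more cleanly, one observes that the ``block-diagonal'' part of $\widetilde A$ corresponding to a fixed value of $\gamma$ is unaffected by conjugation by $\Gamma$, so no perturbation is even needed once the triangular structure is arranged; I would pick whichever exposition is shortest.

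Finally I would unwind the conjugations. Since $D = U\,\Gamma\,U^*$ (both act as multiplication by $d_{\pi(n)}$ in the basis $(f_n)$, equivalently by $d_n$ in $(e_n)$), we get
\begin{displaymath}
D^{-1} A D = U\,\Gamma^{-1} U^* A U\,\Gamma\, U^* = U\,\bigl(\Gamma^{-1}\widetilde A\,\Gamma\bigr)\,U^*,
\end{displaymath}
so $\|D^{-1}AD\| = \|\Gamma^{-1}\widetilde A\,\Gamma\| \le \|\widetilde A\| = \|A\|$, which is the claim. The main obstacle, as indicated, is purely the combinatorial/measure-theoretic care needed to produce the reordering $\pi$ and to deal with repeated values of $d_n$ (and with the possibility that $D$ is unbounded); the functional-analytic content is entirely borrowed from Theorem~\ref{theo Kacnelson}. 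I would therefore organize the write-up so that the reduction to a strictly increasing, triangular situation is isolated as the first paragraph, after which the result is immediate.
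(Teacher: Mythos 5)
Your reduction to Theorem~\ref{theo Kacnelson} has a genuine gap at exactly the point you flag, and neither of your proposed repairs closes it. After passing to a finite truncation and reordering so that $\gamma_n = d_{\pi(n)}$ is non-decreasing (the finite-truncation fallback is the right move; note in passing that your sufficient conditions for reordering the full infinite sequence are false --- $d_n = 1/(n+1)$ is bounded, with each value attained once, yet admits no non-decreasing rearrangement), the matrix $\widetilde A$ is only \emph{block}-lower-triangular: for $m<n$ with $\gamma_m=\gamma_n$ the entry $\widetilde a_{m,n}$ need not vanish, since \eqref{cond trou} is silent when two $d$-values coincide. Theorem~\ref{theo Kacnelson} requires genuine lower-triangularity of the matrix, so it does not apply. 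The perturbation $\gamma_n\mapsto\gamma_n+\eps n$ attacks the wrong hypothesis: Kacnel'son already tolerates non-strictly-increasing weights, and making them strictly increasing does nothing to triangularize $\widetilde A$ (worse, the perturbed weights no longer satisfy ``$\gamma'_m<\gamma'_n\Rightarrow\widetilde a_{m,n}=0$'', so the structure you were exploiting is lost). The observation that the diagonal blocks are fixed by conjugation by $\Gamma$ is true but does not by itself yield the norm inequality: writing $\widetilde A=L+B$ with $L$ strictly block-lower-triangular and $B$ block-diagonal, applying Kacnel'son to $L$ and the triangle inequality only gives $\|\Gamma^{-1}\widetilde A\Gamma\|\le\|L\|+\|B\|\le 3\|A\|$, not $\le\|A\|$.

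The gap is repairable, but it needs one more idea. For instance: on each level set of $\gamma$ choose a unitary that Schur-triangularizes the corresponding diagonal block of $\widetilde A$; their direct sum $V$ commutes with $\Gamma$ (because $\Gamma$ is scalar on each block), the matrix $V^*\widetilde AV$ is genuinely lower triangular, and $\Gamma^{-1}V^*\widetilde AV\Gamma=V^*\bigl(\Gamma^{-1}\widetilde A\Gamma\bigr)V$, so Theorem~\ref{theo Kacnelson} now applies and gives the claim. With that insertion your route would make the lemma a formal corollary of Kacnel'son's theorem, which is genuinely different from what the paper does: the paper does not deduce the lemma from the stated theorem but reproves it from scratch (a maximum-principle argument on $z\mapsto D^{-z}A_ND^{z}$ over the right half-plane, and a second, Fourier-analytic proof using the Poisson kernel); its phrase ``the proof is the same as that of Kacnel'son's theorem'' means the \emph{argument} transfers to hypothesis \eqref{cond trou}, not that the \emph{statement} does.
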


The proof is the same as that of Kacnel'son's theorem, but we reproduce it for the convenience of the reader.
Actually we propose two different proofs.
\begin{proof}[Proof 1.]
Let $\C_0$ be the right-half plane $\C_0 = \{z \in \C \tq \Re z > 0 \}$. We set $H_N = {\rm span}\, \{ e_n \tq n \leq N \}$ and
\begin{displaymath}
A_N = P_N A J_N \, ,
\end{displaymath}
where $P_N$ is the orthogonal projection from $\ell_2$ onto $H_N$ and $J_N$ the canonical injection from $H_N$ into $\ell_2$. 
We consider, for $z \in \overbar{\C_0}$:
\begin{displaymath}
A_N (z) = D^{- z} A_N D^z \colon H_N \to H_N \, ,
\end{displaymath}
where $D^z (e_n) = d_n^{\, z} e_n$. 

If $\big( a_{m, n} (z) \big)_{m, n}$ is the matrix of $A_N (z)$ on the basis $\{e_n \tq n \leq N\}$ of $H_N$, we clearly have:
\begin{displaymath}
a_{m, n} (z) = a_{m, n} (d_n / d_m)^z \, .
\end{displaymath}
In particular, we have, thanks to \eqref{cond trou}:
\begin{displaymath}
a_{m, n} (z) = 0  \quad \text{if } d_m < d_n \, ,
\end{displaymath}
and
\begin{displaymath}
\qquad\qquad\quad | a_{m, n} (z) | \leq \sup_{k, l} |a_{k, l}| := M \, , \qquad \text{for all } z \in \overbar{\C_0} \, .
\end{displaymath}
Since $\| A_N (z) \|^2 \leq \| A_N (z)\|_{HS}^2 = \sum_{m, n \leq N} |a_{m, n} (z)|^2 \leq (N + 1)^2 M^2$, we get:
\begin{displaymath}
\qquad \qquad \| A_N (z) \| \leq (N + 1) \, M \qquad \text{for all } z \in \overbar{\C_0} \, .
\end{displaymath}

Let us consider the function $u \colon \overbar{\C_0} \to \overbar{\C_0}$ defined by:
\begin{equation}
u_N (z) = \| A_N (z) \| \, .
\end{equation}
This function $u_N$ is continuous on $\overbar{\C_0}$, bounded above by $(N + 1) M$, and subharmonic in $\C_0$. Moreover, thanks to 
\eqref{cond trou}, the maximum principle gives:
\begin{displaymath}
\sup_{\overbar{\C_0}} u_N (z) = \sup_{\partial \C_0} u_N (z) \, .
\end{displaymath}
Since $\| D^z \| = \| D^{- z} \| = 1$ for $z \in \partial \C_0$, we have $\| A_N (z) \| \leq \| A_N \|$ for $z \in \partial \C_0$, and we get:
\begin{displaymath}
\sup_{\overbar{\C_0}} u_N (z) \leq \| A_N \| \leq \| A \| \, .
\end{displaymath}
In particular $u_N (1) \leq \| A \|$, and, letting $N$ going to infinity, we obtain that $\| D^{- 1} A D \| \leq \| A \|$.
\medskip

\emph{Proof 2.}
Since $d_n$ is positive, we can write $d_n=\e^{-\rho_n}$ where $\rho_n\in \mathbb{R}$. 
Let $x=(x_n)_{n\ge0}$ and $y=(y_n)_{n\ge0}\in\ell^2$ with finite support, we are interested in controlling the sum
 
$$S=\sum_{m,n} a_{m,n}\frac{d_n}{d_m}\, x_n \overline{y_m}$$
which can also be written

$$ S=\sum_{m,n} a_{m,n}  \e^{-|\rho_n-\rho_m|}\,x_n \overline{y_m}.$$ 
since the non trivial part of the sum runs over the pairs $(m,n)$ such that $d_m\geq d_n$ {\it i.e.} $\rho_n\geq\rho_m$.

Now we introduce the function $\dis f(t)=\frac{1}{\pi(1+t^2)}$ for $t\in\R$, which is positive and belongs to the unit ball of $L^1(\R)$.
Moreover, its Fourier transform satisfies, for every $x\in\R$,
$$\mathcal{F}(f)(-x)=\int_\R f(t)\e^{ixt}dt=\e^{-|x|}\;.$$

We get
$$S=\int_\R f(t)\bigg(\sum_{m,n} a_{m,n} x_n\e^{i\rho_{n}t} \overline{y_m \e^{i\rho_{m}t}}\bigg) \, dt 
= \int_\R f(t) \langle A(x(t)), y(t)\rangle_{\ell^2}\, dt$$
where 
$$x(t)=\big(x_n \e^{i\rho_{n}t}\big)_{n\ge0}\quad\text{and}\quad\ y(t) = \big(y_n \e^{i\rho_{n}t}\big)_{n\ge0}.$$
We obtain  
$$|S|\leq \int_\R f(t) \Vert A\Vert \,\Vert x(t)\Vert\, \Vert y(t)\Vert \, dt 
= \int_\R f(t) \Vert A\Vert\, \Vert x\Vert\, \Vert y\Vert \, dt = \Vert A\Vert \Vert x\Vert \Vert y\Vert$$ 
since $\|f\|_{L^1(\R)}=1$. 

Since $x$ and $y$ are arbitrary, this proves $\dis\Vert D^{-1}AD\Vert \leq \Vert A\Vert$. 
\end{proof}
\smallskip

\begin{proof} [Proof of Theorem~\ref{theo cond suff}]
First, if $|\phi ' (0) | = 1$, we have $\phi (z) = \alpha \, z$ for some $\alpha$ with $|\alpha | = 1$, and the result is trivial.

So, we assume that $|\phi ' (0) | < 1$. Then, by Lemma~\ref{lemma 1}, there exists $\rho > 0$ such that, for all $m$, $n$:
\begin{displaymath}
| \hat{\phi^n} (m) | \leq \exp \Big( - \frac{1}{2} \, [(1 + \rho) \, n - m] \Big) \, .
\end{displaymath}

Since $\phi (0) = 0$, we also know that $\hat{\phi^n} (m) = 0$ if $m < n$. 
\smallskip

Take $\delta = \rho / 2$ and use property \eqref{cond Luis}: there exists $M\ge1$ such that:
\begin{displaymath}
\qquad \frac{\beta_m}{\beta_n} \leq M \quad \text{when } m \geq (1 + \delta) \, n \, .
\end{displaymath}

Define now a new sequence $\gamma = (\gamma_n)$ as:
\begin{displaymath}
\gamma_n = \max \bigg\{ \beta_n, \sup_{m > (1 + \delta) \, n} \beta_m \bigg\} \, .
\end{displaymath}

We have:
\smallskip

1) $\beta_n \leq \gamma_n \leq M \, \beta_n$;
\smallskip

2) $\gamma_m \leq \gamma_n$ \quad if $m \geq (1 + \delta) \, n$.
\medskip

Item 1) implies that $H^2 (\gamma) = H^2 (\beta)$, and we are reduced to prove that 
$C_\phi \colon H^2 (\gamma) \to H^2 (\gamma)$ is bounded.
\smallskip

Let $A = \big( a_{m, n} \big)_{m, n} = \big( \hat{\phi^n} (m) \big)_{m, n}$. We have to prove that
\begin{displaymath}
B = \big( \gamma_m^{1 / 2} \gamma_n^{- 1 / 2} a_{m, n} \big)_{m, n}
\end{displaymath}
represents a bounded operator on $\ell_2$.
\smallskip

Define the matrix
\begin{displaymath}
A_1 = \big( a_{m, n} \ind_{\{ (m, n) \tq m \leq (1 + \delta) \, n \} } \big)_{m, n}
\end{displaymath}
and set $A_2 = A - A_1$. Define analogously $B_1$ and $B_2 = B - B_1$.

Then $A_1$ is a Hilbert-Schmidt operator, because (recall that $a_{m, n} = 0$ if $m < n$), we have
\begin{align*}
\sum_{n = 1}^\infty \sum_{m = 1}^{(1 + \delta) \, n} |a_{m, n} |^2 
& \leq \sum_{n = 1}^\infty \sum_{m = n}^{(1 + \delta) \, n} \exp \big( - [(1 + \rho) \, n - m] \big) \\
& \leq \sum_{n = 1}^\infty (\delta \, n + 1) \, \exp ( - \delta \, n ) < \infty \, .
\end{align*}

Now, $\beta$ is bounded and has polynomial decay, so, for some positive constants $C_1$, $C_2$, and $\alpha$, we have:
\begin{align*}
\sum_{n = 1}^\infty \sum_{m = n}^{(1 + \delta) \, n} \frac{\gamma_m}{\gamma_n} \, |a_{m, n} |^2 
& \leq \sum_{n = 1}^\infty \sum_{m = n}^{(1 + \delta) \, n} \frac{ C_1}{n^{- \alpha}} \, \exp ( - \delta \, n) \\
& \leq \sum_{n = 1}^\infty C_2 (1 + \delta) n^{\alpha  + 1} \exp ( - \delta \, n) < \infty \, ,
\end{align*}
meaning that $B_1$ is a Hilbert-Schmidt operator.
\smallskip

Since $A$ is bounded, it follows that $A_2 = A - A_1$ is bounded. Remark that, writing $A_2 = \big( \alpha_{m, n} \big)_{m, n}$, we have, with 
$d_n = 1 / \sqrt{\gamma_n}$:
\begin{displaymath}
d_m < d_n \quad \Longrightarrow \quad \gamma_m > \gamma_n \quad \Longrightarrow \quad m < (1 + \delta) \, n \quad \Longrightarrow 
\quad \alpha_{m, n} = 0 \, .
\end{displaymath}
Hence we can apply Lemma~\ref{lemma 2} to the matrix $A_2$, and it ensues that $B_2$ is bounded, and therefore that $B = B_1 + B_2$ is 
bounded as well, as wanted.
\end{proof}
\medskip

As a corollary of Theorem~\ref{theo cond suff}, we can provide the following example.

\begin{theorem} \label{theo example}
There exists a bounded sequence $\beta$, with polynomial decay, but which is \emph{not essentially decreasing}, and for which every composition 
operator with symbol vanishing at $0$ is bounded on $H^2 (\beta)$. 
\smallskip

We hence have $\sup_{\phi (0) = 0} \| C_\phi \| = + \infty$.
\end{theorem}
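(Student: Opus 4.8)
The plan is to construct $\beta$ explicitly as a perturbation of a rapidly decaying weight, inserting short "bumps" on sparse blocks of integers to destroy the essentially decreasing property while keeping the weak decrease condition \eqref{cond Luis}, polynomial decay, and boundedness intact; then invoke Theorem~\ref{theo cond suff}. Concretely, I would fix a sparse increasing sequence of integers $(N_k)$ growing very fast (say $N_{k+1} \geq 2^{N_k}$), and on each block $[N_k, 2 N_k)$ let $\beta$ be roughly constant equal to some value $\eps_k \downarrow 0$ much smaller than $1$, while on the "gaps" between blocks let $\beta$ decay slowly from $\eps_{k-1}$ back down — no, rather the other way: I want $\beta$ to be around $1$ on most indices and to have a small dip of depth $\eps_k$ on the short interval $[N_k, 2N_k)$, then to recover. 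The failure of "essentially decreasing" comes from requiring $\beta_{N_k} / \beta_{m} \to 0$ for some $m < N_k$ with $\beta_m \approx 1$ and $\beta_{N_k} \approx \eps_k$; wait, essentially decreasing forbids $\beta_m \leq C\beta_n$ failing for $m > n$, i.e. it is violated when $\beta$ has a spike going \emph{up}. So I should instead make $\beta$ equal to $\eps_k$ on $[N_{k-1}, N_k)$ and jump \emph{up} to $1$ on a short interval around $N_k$ before dropping to $\eps_{k+1}$ — the upward jump from $\eps_k$ to $1$ over a bounded ratio of indices kills essential decrease, because then $\beta_{N_k}/\beta_{N_k - 1} \approx 1/\eps_k \to \infty$ with $N_k > N_k-1$.

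The key steps, in order: (1) Write down the explicit weight with upward spikes as above, choosing $\eps_k = 1/k$ or $\eps_k = 2^{-k}$ and the block positions $N_k$ growing fast enough. (2) Verify boundedness of $\beta$: by construction $\beta_n \leq 1$ for all $n$. (3) Verify polynomial decay, i.e. $\beta_n \gtrsim n^{-\alpha}$: since the smallest values $\eps_k$ occur at indices of size $\geq N_{k-1}$, and $\eps_k$ decays like $k^{-1}$ or $2^{-k}$ while $N_{k-1}$ grows like an iterated exponential, one gets $\eps_k \geq N_{k-1}^{-\alpha}$ for any fixed $\alpha > 0$ once the $N_k$ grow fast enough — so even arbitrarily slow polynomial decay holds. (4) Verify condition \eqref{cond Luis}: given $\delta > 0$, for $m > (1+\delta)n$ I must bound $\beta_m/\beta_n$; the only way this ratio can be large is if $\beta_n$ is at the bottom of a dip (value $\eps_k$) while $\beta_m$ has recovered to $1$, giving ratio $1/\eps_k$ — but crucially, this cannot happen with $m > (1+\delta)n$ if the recovery interval $[N_k, 2N_k)$ where $\beta = 1$ abuts the dip region $[N_{k-1}, N_k)$, because then $n < N_k$ and $m < 2 N_k < (1+\delta)n$ is \emph{false} only when $n > 2N_k/(1+\delta)$, i.e. for $n$ close to $N_k$; the point is that for $n$ at the very bottom near $N_{k-1}$, by the time $m$ reaches the recovery zone we have $m/n \geq N_k/N_{k-1} \to \infty \gg 1+\delta$... hmm, this goes the wrong way. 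Let me reconsider: \eqref{cond Luis} allows the constant $C(\delta)$ to depend on $\delta$ but it must be \emph{finite}, so I need $\sup\{\beta_m/\beta_n : m > (1+\delta)n\} < \infty$. An upward spike of height $1/\eps_k \to \infty$ located so that the spike sits just above a dip \emph{within a bounded index ratio} would violate this. So the construction must be more delicate: the dip and the recovery must be separated by a \emph{large} multiplicative gap in index. That is, $\beta$ should be $\eps_k$ on $[N_{k-1}, M_k)$ and then climb back to $1$ only on $[M_k, N_k)$ where $M_k$ is chosen with $M_k / N_{k-1}$ bounded (say $M_k = 2N_{k-1}$) — so the jump-up happens at a controlled location relative to the dip's left endpoint, killing essential decrease via $\beta_{M_k}/\beta_{M_k-1} \approx 1/\eps_k \to \infty$; but then for \eqref{cond Luis} with $m > (1+\delta)n$: if $n$ is in the dip $[N_{k-1}, M_k)$ deep enough that $n < M_k/(1+\delta)$, then $m$ could be $\approx M_k$ with $\beta_m = 1$, ratio $1/\eps_k$ — still unbounded.

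So the genuine resolution, and what I expect to be the main obstacle, is the balance: condition \eqref{cond Luis} forbids $\beta$ from jumping up by more than a bounded factor across \emph{any} multiplicative gap $(1+\delta)$; in other words, \eqref{cond Luis} does imply that $\beta_m / \beta_n \leq C(\delta)$ for all $m > (1+\delta)n$, which already constrains upward jumps heavily. The trick must be that \eqref{cond Luis} says nothing when $n < m \leq (1+\delta)n$ — the "near-diagonal" regime — so the upward spike must be confined there: $\beta$ rises from $\eps_k$ to $1$ over an index interval of the form $[N_k, (1+\delta')N_k)$ for \emph{small} $\delta'$, and this rise destroys essential decrease (ratio $1/\eps_k$ across a bounded index ratio) without ever violating \eqref{cond Luis} provided that for every fixed $\delta$, only finitely many spikes have their full height realized within ratio $1+\delta$ — arrange $\delta'_k \to 0$. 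Then for fixed $\delta$: if $k$ is large enough that $\delta'_k < \delta$, a jump $n \mapsto m$ with $m/n > 1+\delta$ crossing spike $k$ has already cleared the spike (since the spike lives inside ratio $1+\delta'_k < 1+\delta$), so $\beta_m/\beta_n$ is controlled by the global bounds on the non-spike part (ratios of consecutive $\eps$'s, which are bounded by a constant, plus a single factor $1/\eps_{k_0}$ from the finitely many "fat" spikes). I would carry out steps (1)–(4) with this $\delta'_k \to 0$ design, check essential decrease fails via the ratio computation at the $k$-th spike, check \eqref{cond Luis} by the finiteness argument just sketched, check polynomial decay and boundedness as before, then apply Theorem~\ref{theo cond suff} to conclude all $C_\phi$ with $\phi(0)=0$ are bounded. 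Finally, since $\beta$ is not essentially decreasing, Proposition~\ref{phi(0)=0} (the implication $1)\Rightarrow 2)$, contrapositive) forces $\sup_{\phi(0)=0}\|C_\phi\| = +\infty$, giving the last assertion. The main obstacle is precisely engineering the spike so that it is simultaneously "tall enough and wide enough in index" to break essential decrease yet "narrow enough in multiplicative index-ratio" to respect \eqref{cond Luis}; the $\delta'_k \to 0$ tapering is what makes both hold at once.
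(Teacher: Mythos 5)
Your overall strategy is the paper's: build an explicit weight, check boundedness, polynomial decay, failure of essential decrease and condition \eqref{cond Luis}, apply Theorem~\ref{theo cond suff}, and deduce the last assertion from Proposition~\ref{phi(0)=0}. You also correctly isolate the central tension: the upward jump that kills essential decrease must be confined to multiplicative index-ratios tending to $1$. But the weight you end up designing cannot satisfy \eqref{cond Luis}, and the $\delta'_k\to 0$ tapering does not repair it. The obstruction is the following. Since $\beta$ is bounded and not essentially decreasing, there are pairs $n_j<m_j$ with $\beta_{m_j}/\beta_{n_j}\to\infty$, hence $\beta_{n_j}\to 0$. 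Applying \eqref{cond Luis} with $\delta=1$ gives $\beta_m\le C\,\beta_{n_j}$ for every $m>2n_j$, so $\limsup_{m\to\infty}\beta_m\le C\,\beta_{n_j}\to0$: the whole sequence is forced to tend to $0$. Every variant you describe returns to the level $1$ after each excursion down to $\eps_k$ (indeed your violation of essential decrease is precisely the ratio $1/\eps_k$ between the dip and the recovered value $1$), so it is ruled out. Concretely, take $n$ at the bottom of the $k$-th dip and $m$ any index far beyond the $k$-th spike where $\beta_m=1$; then $m>(1+\delta)n$ and $\beta_m/\beta_n=1/\eps_k\to\infty$. Narrowing the spike only controls pairs with both $n$ and $m$ near the spike; it does nothing for $n$ in the low region and $m$ far to the right.

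The repair is to let the ambient level itself decay to $0$ and make the dips excursions \emph{below} that decaying envelope, so that no value after the spike ever exceeds the values before the dip. The paper takes $\beta_n=1/k!$ on the block $k!<n\le (k+1)!$, except for a single-point dip $\beta_{(k+1)!-1}=1/(k+1)!$. Then $\beta_m\le\beta_n$ whenever $m\ge n+2$, which gives \eqref{cond Luis} at once (for fixed $\delta$ one has $(1+\delta)n\ge n+2$ for all but finitely many $n$), while $\beta_{(k+1)!}/\beta_{(k+1)!-1}=k+1\to\infty$ destroys essential decrease; boundedness and the lower bound $\beta_n\ge 1/(2n)$ are immediate. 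With such a weight in place, the rest of your argument (Theorem~\ref{theo cond suff}, then Proposition~\ref{phi(0)=0} to get $\sup_{\phi(0)=0}\|C_\phi\|=+\infty$) goes through exactly as you say.
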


It should be noted that for this weight, the composition operators are not all bounded, as we will see in Proposition~\ref{T_a not bounded}.

\begin{proof} 
Define $\beta_n = 1$ for $n \leq 3!$, and, for $k \geq 3$:
\begin{displaymath}
\left\{
\begin{array} {lcl} 
\beta_n = & \dis \frac{1}{k!} & \text{for } k! < n \leq (k + 1)! - 2 \text{ and for } n = (k + 1)! \\
\\
\beta_n = & \dis \frac{1}{(k + 1)!} & \text{for } n = (k + 1)! - 1 \, .
\end{array}
\right.
\end{displaymath}

Note that, for $m > n$, we have $\beta_m > \beta_n$ only if $n = (k + 1)! - 1$ and $m = (k + 1)! = n + 1$, for some $k \geq 3$. 
\smallskip

However $\beta$ is not essentially decreasing since, for every $k \geq 3$, we have $\beta_{n + 1} / \beta_n = k + 1$ if $n = (k + 1)! - 1$. 
\smallskip

The sequence $\beta$ has a polynomial decay because $\beta_n \geq 1 / (2 \, n)$ for all $n \geq 1$. In fact, for $k \geq 3$, we have 
$\beta_n \geq (k + 1)/ n \geq 1 / n$ if $k! < n \leq (k + 1)! - 2$ or if $n = (k + 1)!$; and for $n = (k + 1)! - 1$, we have 
$n \, \beta_n = [(k + 1)! - 1] / (k + 1)! \geq 1 /2$. It has a polynomial growth since it is bounded above, by $1$.
\smallskip

Now, it remains to check \eqref{cond Luis} in order to apply Theorem~\ref{theo cond suff} and finish the proof of Theorem~\ref{theo example}. 
Note first that we have $\beta_m / \beta_n \leq 1$ if $m \geq n + 2$. Next, for given $\delta > 0$, there exists an integer $N$ such that 
$(1 + \delta) \, n \geq n + 2$ for every $n \geq N$, so $\beta_m / \beta_n \leq 1$ if $m \geq (1 + \delta)\, n$ and $n \geq N$. 
It suffices to take $C = \max_{1 \leq n \leq N} \beta_{n + 1} / \beta_n$ to obtain \eqref{cond Luis}. The last assertion follows from Proposition~\ref{phi(0)=0}.
\end{proof}

\goodbreak

\section {Boundedness of composition operators of symbol $T_a$} \label{sec: T_a}

Recall that for $a \in \D$, we defined
\begin{equation} 
\qquad \quad T_a (z) = \frac{a + z}{1 + \bar{a} \, z} \, \raise 1 pt \hbox{,} \quad z \in \D \, .
\end{equation} 

It is well-known that $T_a$ is an automorphism of $\D$ and that $T_a (0) = a$ and $T_a (- a) = 0$.

Though we do not really need this, we may remark that $(T_a)_{a \in (- 1, 1)}$ is a group and $(T_a)_{a \in (0, 1)}$ is a semigroup. 
It suffices to see that $T_a \circ T_b = T_{a \ast b}$, with:
\begin{equation} \label{produit}
a \ast b = \frac{a + b}{1 + a b} \, \cdot 
\end{equation} 

In this section, we are going to prove a necessary and sufficient condition in order that all composition operators $C_{T_a}$ for $a \in \D$ are bounded on 
$H^2 (\beta)$. Namely, we have the following theorem, the proof of which will occupy Section~\ref{subsec: CS for T_a} and Section~\ref{subsec: CN for T_a}. 

\begin{theorem} \label{theo CNS for T_a}
All composition operators $C_{T_a}$, with $a \in \D$ are bounded on $H^2 (\beta)$ if and only if $\beta$ is slowly oscillating. 
\end{theorem}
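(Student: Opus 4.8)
The plan is to prove Theorem~\ref{theo CNS for T_a} in two directions, the easy sufficiency and the hard necessity, using the matrix-representation framework and Kacnel'son-type triangular-twisting lemmas already set up.

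\medskip

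\textbf{Sufficiency ($\beta$ slowly oscillating $\Rightarrow$ all $C_{T_a}$ bounded).} It suffices, by the group law $T_a\circ T_b=T_{a*b}$ and the fact that $C_{T_a}C_{T_b}=C_{T_b\circ T_a}$, together with the rotation invariance of $\|\cdot\|_{H^2(\beta)}$, to treat $a\in(0,1)$ and in fact only $a$ close to $0$, since a general $a$ is a finite composition of such. Fix small $a>0$ and estimate the Fourier coefficients $\widehat{T_a^{\,n}}(m)$. A direct computation gives $T_a(z)^n=\big(\frac{a+z}{1+az}\big)^n$; expanding, one finds that the mass of $(\widehat{T_a^{\,n}}(m))_m$ concentrates near $m\approx n$ with exponential/Gaussian-type decay in $|m-n|$ on a scale comparable to $\sqrt n$ (this is the Taylor-coefficient estimate the introduction warns is delicate). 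Split the matrix $A=(\widehat{T_a^{\,n}}(m))_{m,n}$ into the ``near-diagonal'' band $|m-n|\le K\sqrt{n\log n}$ and the rest. On the band, slow oscillation gives $\beta_m/\beta_n$ bounded (once $a$, hence the band width relative to $n$, is small), so the twisted band matrix is dominated by the untwisted one, which is bounded since $C_{T_a}$ is bounded on ordinary $H^2$; off the band, the coefficients decay fast enough that, combined with the polynomial decay and growth of $\beta$ (guaranteed by Proposition~\ref{simple proposition polynomial minoration}$(1)$), the twisted tail is Hilbert--Schmidt. Adding the two pieces gives boundedness of $C_{T_a}$ on $H^2(\beta)$. (Alternatively, one can invoke the integral-representation argument sketched in the introduction when $\beta$ admits one, but the matrix argument is uniform.)

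\medskip

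\textbf{Necessity (all $C_{T_a}$ bounded $\Rightarrow$ $\beta$ slowly oscillating).} Suppose $C_{T_a}$ is bounded for every $a$ (equivalently, by the uniform boundedness principle for a single fixed $a\ne0$, which is part of the statement). We must show there are $c<1<C$ with $c\le\beta_m/\beta_n\le C$ whenever $n/2\le m\le 2n$. Using $\|C_{T_a}(e_n^\beta)\|^2=\frac{1}{\beta_n}\sum_m|\widehat{T_a^{\,n}}(m)|^2\beta_m\le\|C_{T_a}\|^2$, and similarly for $C_{T_{-a}}=C_{T_a}^{-1}$ composed appropriately, we obtain, for a fixed small $a>0$,
\begin{equation*}
\sum_{m=0}^\infty |\widehat{T_a^{\,n}}(m)|^2\,\beta_m \le \|C_{T_a}\|^2\,\beta_n .
\end{equation*}
The point is then a lower bound on $|\widehat{T_a^{\,n}}(m)|$ for $m$ in a window of length $\sim\varepsilon n$ around a chosen target: one shows that for $a$ belonging to a suitable subinterval of $(0,1)$ the averaged mass $\sum_{m\in I}|\widehat{T_a^{\,n}}(m)|^2$ over an interval $I\subset[n/2,2n]$ of proportional length is bounded below by an absolute constant. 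Plugging a lower bound $\beta_m\ge \min_{k\in I}\beta_k$ into the displayed inequality and choosing $I$ appropriately yields $\min_{k\in I}\beta_k\lesssim\beta_n$; running the same with $T_{-a}$ (or exploiting that $C_{T_a}^{-1}=C_{T_{-a}}$ is also bounded) yields $\beta_n\lesssim\max_{k\in I}\beta_k$, and covering $[n/2,2n]$ by finitely many such windows (a number independent of $n$) converts these into the two-sided bound $c\le\beta_m/\beta_n\le C$ on $n/2\le m\le 2n$. Finally, the equivalence of assertions $1)$, $2)$, $3)$ in Theorem~\ref{main theorem completed} is then read off: $2)\Rightarrow1)$ is trivial, $1)\Rightarrow3)$ is the necessity just proved (a single bounded $C_{T_a}$, $a\ne0$, suffices for the coefficient inequality above), and $3)\Rightarrow2)$ is the sufficiency.

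\medskip

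The main obstacle is the sharp two-sided control of the Taylor coefficients $\widehat{T_a^{\,n}}(m)$ for $a$ ranging over a subinterval of $(0,1)$: the upper estimate (concentration near the diagonal) must be strong enough to make the off-band part Hilbert--Schmidt under only polynomial bounds on $\beta$, while the lower estimate (non-negligible mass spread over a proportional window) must be robust enough to force slow oscillation. Getting a \emph{uniform-in-$n$} lower bound on $\sum_{m\in I}|\widehat{T_a^{\,n}}(m)|^2$ for $|I|\asymp n$ — rather than pointwise in $m$, where the coefficients may oscillate in sign — is the crux, and is presumably where averaging over $a$ in a subinterval is used to kill cancellation.
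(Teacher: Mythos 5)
Your overall architecture (a band/off-band matrix splitting for sufficiency, and an averaged-in-$a$ lower bound on the Fourier coefficients of $T_a^n$ for necessity) is the same as the paper's, and you correctly identify the crux; but both halves as written have genuine gaps.

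In the sufficiency direction, two problems. First, you invoke a Gaussian-type concentration of $\widehat{T_a^{\,n}}(m)$ at scale $\sqrt n$ around $m=n$; this is a sharp (Szehr--Zarouf type) estimate that you neither prove nor need. The paper only uses the crude Cauchy estimates of Lemma~\ref{majo coeff}: $|\widehat{\phi^n}(m)|\le \e^{-bn}$ for $n\ge\lambda m$ and $|\widehat{\phi^n}(m)|\le\e^{-bm}$ for $m\ge\lambda n$, which make the off-band twisted part Hilbert--Schmidt using only the polynomial decay and growth of $\beta$. Second, and more seriously, the step ``the twisted band matrix is dominated by the untwisted one, which is bounded'' is not a valid deduction: truncating a bounded matrix to a band whose width grows with $n$, and then multiplying its entries by bounded positive factors, does not in general preserve operator boundedness (entrywise domination controls nothing for matrices with oscillating entries, and the Schur multiplier given by the indicator of a band need not be bounded). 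This is exactly the point of Lemma~\ref{lemma LU}: one covers the multiplicative band by blocks $I_k\times J_k$ with $J_k=[\lambda^k,\lambda^{k+1})$, $I_k=[\lambda^{k-1},\lambda^{k+2})$, bounds each compressed block $P_kAQ_k$ by $C^{1/2}\|A\|$ using the two-sided slow-oscillation bound on $\beta_m/\beta_n$ inside the block, and sums using the bounded overlap of the $I_k$ and the orthogonality of the $J_k$. Without some such device your band term is not controlled.

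In the necessity direction, the conversion from your averaged lower bound to the pointwise two-sided estimate $c\le\beta_m/\beta_n\le C$ does not close. From $\sum_m|\widehat{T_a^{\,n}}(m)|^2\beta_m\le K^2\beta_n$ and a lower bound on $\sum_{m\in I}|\widehat{T_a^{\,n}}(m)|^2$ you only get $\min_{k\in I}\beta_k\lesssim\beta_n$, i.e.\ that \emph{some} $\beta_k$ in each window is controlled; covering $[n/2,2n]$ by finitely many windows does not upgrade this to a bound on \emph{every} $\beta_m$. Your proposed reverse inequality via $C_{T_{-a}}$ also fails as stated: the analogous inequality for $T_{-a}$ again bounds a weighted sum of $\beta_m$ by $\beta_n$, i.e.\ points the same way. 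The paper's fix is to use \emph{both} families of inequalities coming from $\|A\|\le K$: for fixed $n$, $\sum_m|\widehat{T_a^{\,n}}(m)|^2\beta_m\le K^2\beta_n$, and for fixed $m$, $\sum_n|\widehat{T_a^{\,n}}(m)|^2\beta_n^{-1}\le K^2\beta_m^{-1}$; together with the pointwise-in-$m$ averaged bound $\int_{1/2}^{2/3}|\widehat{T_a^{\,n}}(m)|^2\,da\ge\delta/n$ of Proposition~\ref{sept} and the harmonic--arithmetic mean inequality, the second family gives $\beta_m\lesssim\frac1m\sum_{k\approx m}\beta_k$ (each term bounded by an average), the first gives $\frac1n\sum_{j\approx n}\beta_j\lesssim\beta_n$ (an average bounded by a single term), and chaining the two yields $\beta_m\lesssim\beta_n$ for all $m$ in the window. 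This chaining step is the missing ingredient in your sketch.
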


Before that, let us note the following fact (see also \cite[Proposition~3.6]{ZorboPgD}). Recall that if $\phi$ and $\psi$ are two symbols, then 
$C_\phi \circ C_\psi = C_{\psi \circ \phi}$. 

\begin{proposition} \label{tous pour un}
If $C_{T_a}$ is bounded on $H^2 (\beta)$ for some $a \in \D \setminus \{0\}$, then $C_{T_b}$ is bounded on $H^2 (\beta)$ for all $b \in \D$. 

Moreover the maps $C_{T_b}$ are uniformly bounded on the compact subsets of $\D$.
\end{proposition}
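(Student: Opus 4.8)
The plan is to exploit the semigroup/group structure $T_a \circ T_b = T_{a\ast b}$ together with the fact, noted in the introduction, that rotations $R_\theta$ act isometrically and surjectively on $H^2(\beta)$. First I would observe that if $C_{T_a}$ is bounded for some $a \in \D\setminus\{0\}$, then $C_{T_{-a}} = C_{T_a}^{-1}$ is also bounded: indeed $T_a \circ T_{-a} = T_0 = \mathrm{id}$, so $C_{T_{-a}} \circ C_{T_a} = C_{T_{-a}\circ T_a}\ \text{(composition identity reversed)} = C_{\mathrm{id}}$ and likewise on the other side, whence $C_{T_a}$ is invertible with $C_{T_a}^{-1} = C_{T_{-a}}$ bounded. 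Next, writing $a = |a|\e^{i\theta}$, a direct computation gives $T_a = R_\theta \circ T_{|a|} \circ R_{-\theta}$ (conjugating the disk automorphism by a rotation), so $C_{T_a} = C_{R_{-\theta}} C_{T_{|a|}} C_{R_\theta}$ up to the order reversal in the composition rule; since the rotation operators are isometric isomorphisms, $C_{T_{|a|}}$ is bounded with the same norm as $C_{T_a}$. Thus we may assume the given $a$ lies in $(0,1)$ and, after possibly replacing $a$ by $-a$, we have boundedness of $C_{T_r}$ for $r = a \in (0,1)$, hence also for $r = -a$.

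Now for arbitrary $b \in \D$, I would again reduce to $b \in (0,1)$ by the rotation conjugation above, and then use that $(T_r)_{r\in(0,1)}$ is a semigroup under $\ast$: given $b \in (0,1)$, choose an integer $N$ large enough that, iterating $r \mapsto r \ast r \geq r$ (note $r\ast r > r$ for $r\in(0,1)$), the $N$-fold $\ast$-power $a^{\ast N}$ of $a$ exceeds $b$; more simply, I would show every $b\in(0,1)$ can be written $b = a^{\ast k} \ast c$ for some $k\geq 0$ and some $c \in (-1,1)$ — in fact it is cleaner to note $a^{\ast k} \to 1$ as $k\to\infty$, pick $k$ with $a^{\ast k} \geq b$, set $c = b \ast (a^{\ast k})^{-1}\cdot(-1)$... — actually the transparent route is: since $T_{a^{\ast k}} = T_a \circ \cdots \circ T_a$ ($k$ times), $C_{T_{a^{\ast k}}}$ is bounded (with norm $\leq \|C_{T_a}\|^k$) for all $k\geq 0$, and similarly for negative iterates using $C_{T_{-a}}$; and any $b\in(-1,1)$ satisfies $b = s \ast t$ with $s = a^{\ast k}$ for a suitable signed integer $k$ and $|t| < |a|$, so that $C_{T_b} = C_{T_t} \circ C_{T_s}$ (order reversed) requires also boundedness of $C_{T_t}$ for $|t|$ small. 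To close this last gap I would use that for $|t|$ below a fixed threshold $T_t$ maps $\D$ into a fixed relatively compact sub-disk $\phi(\D) \Subset r_0\D$... no: $T_t$ is still onto $\D$. The correct observation is simpler: $C_{T_a}^{-1} = C_{T_{-a}}$ is bounded, and $t \mapsto a \ast t$ is a bijection of $(-1,1)$, so the set $\{b : C_{T_b}\ \text{bounded}\}$ is a subgroup of $((-1,1),\ast)$ containing $a\neq 0$; since $((-1,1),\ast)$ is isomorphic (via $b \mapsto \frac12\log\frac{1+b}{1-b}$) to $(\R,+)$ and its only closed proper subgroup containing a nonzero element fails to be all of $\R$ only if discrete, I would instead argue the subgroup is all of $(-1,1)$ by noting it contains the dense subgroup $\{a^{\ast k} : k\in\Z\}$...

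The genuinely clean argument, which I would actually write, is: the set $\Sigma = \{b\in\D : C_{T_b}\ \text{bounded on } H^2(\beta)\}$ is closed under the operation $b \mapsto T_c(b)$ for any fixed $c\in\Sigma$ (because $T_c \circ T_b = T_{T_c(b)}$ after accounting for order, and a composition of bounded operators is bounded), closed under $b\mapsto -b$ (inverse, as shown), and closed under multiplication by unimodular constants (rotation conjugation). Given $a\in\Sigma\setminus\{0\}$, the orbit of $0$ under the group generated by $T_a, T_{-a}$ and the rotations is dense in $\D$ (the hyperbolic translations along one geodesic together with all rotations about the center generate a dense subset of $\mathrm{Aut}(\D)$, hence a dense orbit of $0$); so $\Sigma$ is dense in $\D$. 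To upgrade density to all of $\D$ and get local uniform boundedness simultaneously, I would fix $b\in\D$, pick $c\in\Sigma$ close to $b$ so that $w := T_{-c}(b)$ has modulus less than $|a|$, and observe that the map $d \mapsto C_{T_d}$ from the disk $|d| < |a|$ into $B(H^2(\beta))$ is uniformly bounded: indeed $C_{T_d} = C_{T_{-c}}^{-1}\cdots$ — no. \textbf{The main obstacle}, and the one place real work is needed, is precisely this last step: controlling $\|C_{T_d}\|$ for $d$ in a neighborhood of $0$ (equivalently, proving local uniform boundedness near the identity automorphism), since boundedness of $C_{T_a}$ for a single $a$ gives no a priori bound on $C_{T_d}$ for small $d$. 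I expect this is handled by a reproducing-kernel / Schur-test estimate showing $\|C_{T_d}\| \leq 2\|C_{T_a}\|$, say, for all $d$ with $|d| \leq |a|/2$, using the explicit Taylor coefficients of $T_d^n$ (the same estimates that Theorem~\ref{theo CS T_a beta SO} will rely on), after which the group relation $T_b = T_c \circ T_d$ with $c\in\Sigma$, $|d|\leq |a|/2$, $c \to b$ yields both boundedness of $C_{T_b}$ for every $b$ and the claimed uniform bound on compact subsets of $\D$ via $\|C_{T_b}\| \leq \|C_{T_c}\|\cdot\sup_{|d|\le|a|/2}\|C_{T_d}\|$ together with continuity of $c\mapsto\|C_{T_c}\|$ along the orbit.
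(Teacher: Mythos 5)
Your reductions (rotation conjugation to bring $a$ into $(0,1)$, the identity $C_{T_{-a}}=C_{T_a}^{-1}$, the semigroup law $T_a\circ T_b=T_{a\ast b}$) are all correct and are indeed used in the paper, but your proof never closes, and you say so yourself: the ``main obstacle'' you isolate --- obtaining a bound on $\|C_{T_d}\|$ for $d$ in a neighbourhood of $0$ --- is left unresolved. Worse, the fix you sketch (a Schur-test estimate on the Taylor coefficients of $T_d^n$, ``the same estimates that Theorem~\ref{theo CS T_a beta SO} will rely on'') would be circular here: those estimates give boundedness only under the hypothesis that $\beta$ is slowly oscillating, which at this stage of the argument is precisely what is not yet known. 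Similarly, the subgroup/density arguments you float cannot work as stated: the set of good parameters could a priori be the discrete subgroup generated by $a$, and density of an orbit gives no boundedness at the missing points. So as written the proposal does not prove the proposition.

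The paper's resolution of your obstacle is a short geometric trick that avoids small parameters altogether. Fix $r\in(0,1)$ with $C_{T_r}$ bounded and consider the continuous function $u(s)=\bigl|\frac{s+r}{1+\bar s r}\bigr|=|(T_s\circ T_r)(0)|$ on the circle $|s|=r$. By connectedness its range contains the whole interval $\bigl[u(-r),u(r)\bigr]=\bigl[0,\frac{2r}{1+r^2}\bigr]$, so \emph{every} $b$ with $|b|\le\frac{2r}{1+r^2}$ (including $b$ close to $0$) satisfies $|b|=|(T_s\circ T_r)(0)|$ for some $s$ with $|s|=r$. Schwarz's lemma then gives $T_b=R_\alpha\circ T_s\circ T_r\circ R_\theta$ for suitable rotations, whence $\|C_{T_b}\|\le\|C_{T_r}\|\,\|C_{T_s}\|=\|C_{T_r}\|^2$, since $C_{T_s}$ has the same norm as $C_{T_{|s|}}=C_{T_r}$ by rotation conjugation. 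Thus one never needs to bound $C_{T_d}$ for small $d$ separately: every nearby automorphism is factored through two automorphisms whose parameters have modulus exactly $r$. Iterating $r_{n+1}=\frac{2r_n}{1+r_n^2}=r_n\ast r_n$ from $r_0=|a|$ increases to $1$, which yields boundedness of all $C_{T_b}$ together with the uniform bound $\|C_{T_b}\|\le\|C_{T_a}\|^{2^{n+1}}$ on the disk $|b|\le r_{n+1}$, i.e.\ uniform boundedness on compact subsets of $\D$.
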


 We decompose the proof into lemmas. The first one was first proved in \cite{ZorboPgD} (see also \cite[Proposition~2.1]{Eva-Jonathan}), and follows from 
the fact that if $b = \rho \, \e^{i \theta}$ and $R_\theta$ is the rotation $R_\theta (z) = \e^{i \theta} z$, which induces a unitary operator 
$C_{R_\theta}$ on $H^2 (\beta)$, then $T_b = R_\theta \circ T_\rho \circ R_{- \theta}$ and 
$C_{T_b} = C_{R_{- \theta}} \circ C_{T_\rho} \circ C_{R_\theta}$. 

\begin{lemma} \label{first lemma}
The composition operator $C_{T_b}$ is bounded if and only if $C_{T_{|b|}}$ is bounded, with equal norms.
\end{lemma}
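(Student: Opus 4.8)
The plan is to exploit the factorization $T_b = R_\theta \circ T_\rho \circ R_{-\theta}$ where $b = \rho\,\e^{i\theta}$ with $\rho = |b|$, which is a direct computation: for $z \in \D$,
\begin{displaymath}
(R_\theta \circ T_\rho \circ R_{-\theta})(z) = \e^{i\theta}\,\frac{\rho + \e^{-i\theta}z}{1 + \rho\,\e^{-i\theta}z} = \frac{\rho\,\e^{i\theta} + z}{1 + \rho\,\e^{-i\theta}z} = \frac{b + z}{1 + \bar b\,z} = T_b(z).
\end{displaymath}
Since $C_\phi \circ C_\psi = C_{\psi \circ \phi}$, applying the composition-operator functor reverses the order and gives $C_{T_b} = C_{R_{-\theta}} \circ C_{T_\rho} \circ C_{R_\theta}$.

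The remaining point is that $C_{R_\theta}$ is a surjective isometry (equivalently, a unitary) on $H^2(\beta)$ for every $\theta \in \R$. This is immediate from the definition of the norm: if $f(z) = \sum_n a_n z^n$, then $(f \circ R_\theta)(z) = \sum_n a_n \e^{in\theta} z^n$, so $\|C_{R_\theta} f\|^2 = \sum_n |a_n \e^{in\theta}|^2 \beta_n = \sum_n |a_n|^2 \beta_n = \|f\|^2$; surjectivity follows since $C_{R_{-\theta}}$ is a two-sided inverse. In particular $\|C_{R_\theta}\| = \|C_{R_{-\theta}}\| = 1$.

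Combining the two facts: if $C_{T_\rho}$ is bounded, then $C_{T_b} = C_{R_{-\theta}} \circ C_{T_\rho} \circ C_{R_\theta}$ is a composition of bounded operators, hence bounded, with $\|C_{T_b}\| \leq \|C_{R_{-\theta}}\|\,\|C_{T_\rho}\|\,\|C_{R_\theta}\| = \|C_{T_\rho}\|$. Conversely, since $T_\rho = R_{-\theta} \circ T_b \circ R_\theta$ (invert the relation, or note $\rho = |b|$ and apply the displayed identity with $b$ replaced by $\rho$ and $\theta$ by $0$ after conjugating back), the same argument gives $\|C_{T_\rho}\| \leq \|C_{T_b}\|$. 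Hence $C_{T_b}$ is bounded if and only if $C_{T_{|b|}}$ is, and the norms coincide. There is no real obstacle here; the only thing to be careful about is getting the order of composition right when passing from maps to composition operators.
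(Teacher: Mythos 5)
Your proof is correct and is exactly the argument the paper uses: the factorization $T_b = R_\theta \circ T_\rho \circ R_{-\theta}$ combined with the fact that rotations induce unitary composition operators on $H^2(\beta)$. Both the order of composition and the norm equality are handled correctly.
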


\begin{lemma}\label{second lemma} Let $r\in(0,1)$ such that $ C_{T_r} $ is bounded. 
For any $b\in\D$ satisfying $| b | \leq \frac{2 r}{1 + r^2},$ $C_{T_b}$ is bounded and we have $\| C_{T_b} \| \leq \| C_{T_r} \|^2$.
\end{lemma}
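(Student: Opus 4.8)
The plan is to exploit the semigroup law \eqref{produit} together with Lemma~\ref{first lemma}. By Lemma~\ref{first lemma}, it suffices to treat the case $b = s$ with $0 \le s \le \frac{2r}{1+r^2}$, since $C_{T_b}$ is bounded exactly when $C_{T_{|b|}}$ is, with the same norm. Now observe that the map $r \mapsto r \ast r = \frac{2r}{1+r^2}$ is the value at which $T_r \circ T_r = T_{r\ast r}$, i.e. $C_{T_{r\ast r}} = C_{T_r} \circ C_{T_r}$; hence $C_{T_{r\ast r}}$ is bounded with $\| C_{T_{r \ast r}} \| \le \| C_{T_r} \|^2$. This already gives the endpoint case $s = \frac{2r}{1+r^2}$.

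For the intermediate values $0 \le s < \frac{2r}{1+r^2}$ I would argue by the following interpolation-by-composition trick: write $T_s = T_r \circ T_t$ where $t$ is chosen so that $r \ast t = s$, that is $t = \frac{s-r}{1-rs}$; a direct check shows $|t| \le r$ precisely because $s \le \frac{2r}{1+r^2}$ (equivalently $-r \le t \le r$, the inequality $t \ge -r$ being the nontrivial one, which reduces to $s(1+r^2) \le 2r$ after clearing denominators, while $t \le r$ reduces to $s \ge 0$). Since $|t| \le r < 1$ and $C_{T_r}$ is bounded, Proposition~\ref{tous pour un}'s ingredients are available — but to avoid circularity (this lemma feeds into that proposition) I would instead note directly that $C_{T_{|t|}}$ is bounded with $\|C_{T_{|t|}}\| \le \|C_{T_r}\|$: this is itself a monotonicity statement that needs justification, so the cleanest route is actually to bound $C_{T_t}$ for $|t|\le r$ first and then compose.

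Given the potential circularity, the better-organized plan is: (i) first prove the auxiliary monotonicity claim that if $C_{T_r}$ is bounded and $0 \le u \le r$ then $C_{T_u}$ is bounded with $\|C_{T_u}\| \le \|C_{T_r}\|$ — this should follow from an explicit factorization $T_r = T_u \circ T_{v}$ with $v = \frac{r-u}{1-ru} \in [0,r)$ together with the fact that all the $T_w$, $w\in[0,1)$, are invertible symbols so that $C_{T_u} = C_{T_r} \circ C_{T_v}^{-1}$... but $C_{T_v}^{-1} = C_{T_{-v}}$ need not be a priori bounded, so (i) really requires a genuine argument, likely via the integral/series estimates developed later. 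The honest reading of the paper's structure suggests the intended proof is short: combine $C_{T_{r\ast r}} = C_{T_r}^2$ with Lemma~\ref{first lemma} and the observation that for $|b| \le r \ast r$ one has $T_b = R_\theta \circ T_{|b|} \circ R_{-\theta}$ and $T_{|b|}$ sits "between" $T_0$ and $T_{r\ast r}$ in a sense made precise by writing $T_{|b|} = T_r \circ T_{\tau}$ with $\tau = \frac{|b|-r}{1-r|b|} \in [-r, r]$, and then bounding $C_{T_\tau}$ by $\|C_{T_r}\|$ using Lemma~\ref{first lemma} applied to $|\tau| \le r$ — which is legitimate *if we already know* $C_{T_{|\tau|}}$ is bounded for $|\tau|\le r$. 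Thus the main obstacle is exactly establishing the monotonicity $\|C_{T_u}\| \le \|C_{T_r}\|$ for $0\le u\le r$; once that is in hand, the factorization $T_s = T_r\circ T_t$ with $|t|\le r$ closes the argument immediately via $\|C_{T_s}\| = \|C_{T_t}\circ C_{T_r}\| \le \|C_{T_t}\|\,\|C_{T_r}\| \le \|C_{T_r}\|^2$. I expect the paper resolves the monotonicity step either by a direct subharmonicity/maximum-principle argument on a one-parameter family, or by deferring it and only using the clean endpoint identity; in my write-up I would isolate the monotonicity as the one substantive point and derive everything else from the group law \eqref{produit} and Lemma~\ref{first lemma}.
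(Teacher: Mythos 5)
There is a genuine gap, and you have correctly located it yourself: your argument for the intermediate values $0\le |b| < \frac{2r}{1+r^2}$ reduces to the monotonicity claim that $C_{T_u}$ is bounded with $\|C_{T_u}\|\le\|C_{T_r}\|$ for $0\le u\le r$, and you never establish this. The factorization $T_{|b|}=T_r\circ T_t$ with real $t=\frac{|b|-r}{1-r|b|}\in[-r,r]$ does not help, because boundedness of $C_{T_t}$ for a real $t$ with $|t|<r$ is exactly what is at stake, and $C_{T_v}^{-1}=C_{T_{-v}}$ is indeed not a priori bounded. Only the endpoint case $|b|=\frac{2r}{1+r^2}$ (via $C_{T_{r\ast r}}=C_{T_r}\circ C_{T_r}$) is actually proved in your proposal.

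The missing idea, which is how the paper proceeds, is to let the second factor range over the full \emph{circle} $|s|=r$ rather than over the real segment $[-r,r]$. For $s$ on that circle, Lemma~\ref{first lemma} gives $\|C_{T_s}\|=\|C_{T_{|s|}}\|=\|C_{T_r}\|$ with no monotonicity needed. The function $u(s)=\bigl|\frac{s+r}{1+\overline{s}r}\bigr|=|(T_s\circ T_r)(0)|$ is continuous on the circle $|s|=r$ and takes the values $u(-r)=0$ and $u(r)=\frac{2r}{1+r^2}$, so by connectedness its range contains the whole segment $\bigl[0,\frac{2r}{1+r^2}\bigr]$. Given $b$ with $|b|\le\frac{2r}{1+r^2}$, choose $s$ with $|s|=r$ and $u(s)=|b|$; then $T_b(0)$ and $(T_s\circ T_r)(0)$ have the same modulus, so by Schwarz's lemma $T_b=R_\alpha\circ T_s\circ T_r\circ R_\theta$ for suitable rotations, whence $\|C_{T_b}\|=\|C_{T_r}\circ C_{T_s}\|\le\|C_{T_r}\|\,\|C_{T_s}\|=\|C_{T_r}\|^2$. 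Thus the obstruction you identified is circumvented rather than resolved: no comparison between $\|C_{T_u}\|$ for different real $u\in(0,r)$ is ever required.
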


\begin{proof} Let $S$ be the circle $C(0, r)$ and $u \colon S \to \R_+$ be the continuous function defined by
\begin{equation}
u (s) = \bigg| \frac{s + r}{1 + \overline{s}r} \bigg| \, \cdot
\end{equation}
By connectedness, $u (S)$  contains the segment $\big[0, \frac{2 r}{1 + r^2} \big] = [ u (- r), u (r)]$. 
Let now $b \in D \big(0, \frac{2 r}{1 + r^2} \big)$. By the above, there exists $s \in S$ such that $| b | = u (s)$. That means that  
$| T_b (0) | =|b|=|u(s)|= |T_s (r)| = | (T_s \circ T_r) (0) |$. Therefore, $T_b (0) = \e^{i \alpha} (T_s \circ T_r)(0)$ for some $\alpha \in \R$, and hence, by 
Schwarz's lemma, there is some $\theta \in \R$ such that $T_b = R_\alpha\circ T_s \circ T_r\circ R_\theta$. We then have 
$C_{T_b} =C_{R_\theta}\circ C_{T_r} \circ C_{T_s} \circ  C_{R_\alpha}$. Since $C_{R_\theta}$ and $ C_{R_\alpha}$ are unitary, we get using 
Lemma~\ref{first lemma} for $C_{T_s}$:
\begin{displaymath}
\| C_{T_b} \| = \| C_{T_r} \circ C_{T_s} \| \leq \| C_{T_r} \| \, \| C_{T_s} \| = \| C_{T_r} \|^2 \, . \qedhere
\end{displaymath}
\end{proof} 

\begin{proof} [Proof of Proposition~\ref{tous pour un}] 
It suffices to use Lemma~\ref{first lemma} and Lemma~\ref{second lemma} and do an iteration, in noting that if $r_0 = |a| > 0$ and 
$r_{n + 1} = \frac{2 r_n}{1 + r_n^2} = r_n \ast r_n$, then $(r_n)_{n \geq 0}$ increases to $1$.
\end{proof}

\goodbreak

\subsection {An elementary necessary condition} 

We begin by an elementary necessary condition. It is implied by Theorem~\ref{theo CN T_a beta SO}, but its statement deserves to be pointed out. 
Moreover, its proof is simple and highlights the role of the reproducing kernel. 

\begin{proposition} \label{polynomial lb necessary}
Let $a \in (0, 1)$ and assume that $T_a$ induces a bounded composition operator on $H^2 (\beta)$. Then $\beta$ has polynomial decay.
\end{proposition}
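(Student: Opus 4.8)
The plan is to use the reproducing kernel $K_w$ and the fact that $\|C_{T_a}^\ast K_w\| = \|K_{T_a(w)}\|$. Since $C_{T_a}$ is bounded, so is its adjoint, and $C_{T_a}^\ast K_w = K_{T_a(w)}$ — this is the standard identity coming from $\langle f, C_\phi^\ast K_w\rangle = \langle C_\phi f, K_w\rangle = (f\circ\phi)(w) = \langle f, K_{\phi(w)}\rangle$. Hence for all $w\in\D$,
\begin{displaymath}
\| K_{T_a(w)} \| = \| C_{T_a}^\ast K_w \| \leq \| C_{T_a} \| \, \| K_w \| \, .
\end{displaymath}
First I would restrict to $w = r \in (0,1)$ real and let $r \to 1$. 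Then $T_a(r) = \frac{a+r}{1+ar} \to 1$ as well, and one checks the elementary estimate $1 - T_a(r) = \frac{(1-a)(1-r)}{1+ar} \geq \frac{1-a}{2}(1-r)$, so $1 - T_a(r)$ is comparable to $1 - r$ from below (and above) by constants depending only on $a$.

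Next I would combine this with an a priori \emph{lower} bound on $\|K_w\|$ coming from a single term of the series \eqref{tres}: for any fixed $k$, $\|K_w\|^2 = \sum_{n\geq 0} |w|^{2n}/\beta_n \geq |w|^{2k}/\beta_k$. The key point is to pick $k$ depending on $r$: take $k = k(r)$ to be (roughly) $1/(1-r)$, so that $r^{2k}$ stays bounded below by a positive absolute constant. This gives $\|K_r\|^2 \geq c/\beta_{k(r)}$. On the other hand, I need an \emph{upper} bound on $\|K_{T_a(r)}\|$; but here I should instead run the argument the other way — apply the displayed inequality with the roles arranged so that the point whose kernel norm I bound below is $T_a(r)$ and the one I bound above is $r$. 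Concretely: since $1-T_a(r) \asymp 1-r$, choosing $k \asymp 1/(1-r) \asymp 1/(1-T_a(r))$ makes both $r^{2k}$ and $T_a(r)^{2k}$ bounded below. Then from $\|K_{T_a(r)}\| \leq \|C_{T_a}\|\,\|K_r\|$ I get nothing directly useful; what I actually want is a lower bound on $\beta_k$, so I should use the \emph{other} adjoint relation with $T_{-a} = T_a^{-1}$: $C_{T_{-a}}$ is also bounded (it is $C_{T_a}$ conjugated, or use that $T_{-a}\circ T_a = \mathrm{id}$, and $C_{T_a}$ invertible with bounded inverse $C_{T_{-a}}$). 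Applying $\|K_{T_{-a}(w)}\| \leq \|C_{T_{-a}}\|\,\|K_w\|$ with $w = T_a(r)$ gives $\|K_r\| \leq \|C_{T_{-a}}\|\,\|K_{T_a(r)}\|$, i.e.
\begin{displaymath}
\| K_r \| \leq M \, \| K_{T_a(r)} \| \qquad \text{with } M = \|C_{T_{-a}}\| \, .
\end{displaymath}

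Finally I would assemble: $c/\beta_{k(r)} \leq \|K_r\|^2 \leq M^2 \|K_{T_a(r)}\|^2$, and I need to bound $\|K_{T_a(r)}\|^2$ from above by a polynomial in $k(r)$. For that, note that $\beta$ having polynomial decay is exactly the conclusion, so I cannot assume it; instead I should argue more carefully. The cleanest route is: if $\beta$ does \emph{not} have polynomial decay, then $\liminf_n n^\alpha \beta_n = 0$ for every $\alpha$, i.e. along a subsequence $\beta_{n_j}$ decays faster than any power. Using this, I claim $\|K_{T_a(r)}\|$ cannot be controlled by $\|K_r\|$. More precisely, by Proposition~\ref{reproducing kernel polynomial decay}, polynomial decay of $\beta$ is equivalent to slow growth $\|K_w\| \leq C/(1-|w|)^s$ of the kernels. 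So it suffices to prove the contrapositive in kernel language: if $C_{T_a}$ is bounded then $\|K_w\|$ has slow growth. From $\|K_{T_a(w)}\| \leq \|C_{T_a}\|\,\|K_w\|$, taking $w$ with $T_a(w) = u$, i.e. $w = T_{-a}(u)$, we get $\|K_u\| \leq \|C_{T_a}\|\,\|K_{T_{-a}(u)}\|$, and $1 - |T_{-a}(u)| \asymp 1-|u|$; iterating $T_{-a}$ finitely many times pushes any point toward $0$ where $\|K\|$ is bounded... that over-shoots. The right statement: the map $r\mapsto\|K_r\|$ is non-decreasing, and $\|K_{T_a(r)}\|\le\|C_{T_a}\|\,\|K_r\|$ with $1-T_a(r)\ge\frac{1-a}{2}(1-r)$ shows that pushing $r$ a definite fraction closer to $1$ multiplies $\|K_r\|$ by at most the fixed constant $\|C_{T_a}\|$; since $1-T_a^{\ast n}(r)$ (the $n$-fold semigroup iterate) behaves like $\bigl(\frac{1-a}{2}\bigr)^{?}(1-r)$ — more precisely, for the semigroup, $\frac{1+T_a(r)}{1-T_a(r)} = \frac{1+a}{1-a}\cdot\frac{1+r}{1-r}$, so $n$-fold iteration multiplies $\frac{1+\cdot}{1-\cdot}$ by $\bigl(\frac{1+a}{1-a}\bigr)^n$ — we see that reaching $1-\rho$ from a fixed starting point $r_0$ requires $n \asymp \log\frac1\rho$ iterations, whence $\|K_{1-\rho}\| \leq \|C_{T_a}\|^n \|K_{r_0}\| \lesssim \rho^{-s}$ with $s = \log\|C_{T_a}\|/\log\frac{1+a}{1-a}$. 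This is exactly slow growth, so by Proposition~\ref{reproducing kernel polynomial decay}, $\beta$ has polynomial decay.

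I expect the main obstacle to be the bookkeeping in the iteration step: one must verify that the semigroup iterates $r_0, r_0\ast r_0\ast\cdots$ (equivalently the points with $\frac{1+r}{1-r}$ geometrically increasing) indeed reach every $1-\rho$ with $n = O(\log\frac1\rho)$ steps, and that the elementary kernel inequality $\|K_{T_a(r)}\| \le \|C_{T_a}\|\,\|K_r\|$ can be legitimately iterated (which it can, since each $T_a$ applied is the same bounded operator). Everything else — the adjoint identity $C_\phi^\ast K_w = K_{\phi(w)}$, monotonicity of $r\mapsto\|K_r\|$, the Möbius computation $1-T_a(r) = \frac{(1-a)(1-r)}{1+ar}$ — is routine.
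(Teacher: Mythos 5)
Your final argument is correct and is essentially the paper's own proof: both use the adjoint identity $C_{T_a}^\ast K_w = K_{T_a(w)}$, the monotonicity of $r \mapsto \|K_r\|$, and the orbit of $0$ under iteration of $T_a$ (whose $n$-th point satisfies $1 - u_n \asymp \bigl(\tfrac{1-a}{1+a}\bigr)^n$) to show that $\|K_x\| \lesssim (1-x)^{-s}$ in $O\bigl(\log\tfrac{1}{1-x}\bigr)$ iteration steps, then conclude via Proposition~\ref{reproducing kernel polynomial decay}. The earlier detours in your write-up (choosing $k \asymp 1/(1-r)$, the contrapositive attempt) are correctly abandoned and do not affect the validity of the argument you settle on.
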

\begin{proof} 
Since 
\begin{displaymath} 
\| K_x \|^2 = \sum_{n = 0}^\infty \frac{\ x^{ 2 n}}{\beta_n} \,  \raise 1 pt \hbox{,}
\end{displaymath} 
we have $\| K_x \| \leq \| K_y \|$ for $0 \leq x \leq y < 1$.
\smallskip

We define by induction a sequence $(u_n)_{n \geq 0}$ with:
\begin{displaymath}
u_0 = 0 \qquad \text{and} \qquad u_{n + 1} = T_a (u_n) \, .
\end{displaymath}
Since $T_a (1) = 1$ (recall that $a \in (0, 1)$), we have:
\begin{displaymath}
1 - u_{n + 1} = \int_{u_n}^1 T_a ' (t) \, dt = \int_{u_n}^1 \frac{1 - a^2}{(1 + a t)^2} \, dt \, ;
\end{displaymath}
hence 
\begin{displaymath}
\frac{1 - a}{1 + a} \, (1 - u_n) \leq 1 - u_{n + 1} \leq (1 - a^2) (1 - u_n) \, .
\end{displaymath}
Let $0 < x < 1$. We can find $N \geq 0$ such that $u_N \leq x < u_{N  + 1}$. Then:
\begin{displaymath}
1 - x \leq 1 - u_N \leq (1 - a^2)^N \, .
\end{displaymath}

On the other hand, since $C_{T_a}^{^{\, \ast}} K_z = K_{T_a (z)}$ for all $z \in \D$, we have:
\begin{displaymath}
\| K_x \| \leq \| K_{u_{N  + 1}} \| \leq \| C_{T_a} \| \, \| K_{u_N} \| \leq \| C_{T_a} \|^{N  + 1} \| K_{u_0} \| 
= \frac{1}{\sqrt{\beta_0}}\| C_{T_a} \|^{N  + 1} \, .
\end{displaymath}
Let $ s \geq 0$ such that $(1 - a^2)^{- s} = \| C_{T_a} \|$. We obtain:
\begin{equation} \label{slow growth of reproducing kernels}
\| K_x \| \leq\frac{1}{\sqrt{\beta_0}(1 - x)^s}\| C_{T_a} \| \, .
\end{equation}
We get the result by using Proposition~\ref{reproducing kernel polynomial decay}.
\end{proof}

\noindent {\bf Remarks.} 1) For example, when $\beta_n = \exp \big[ - c \, \big( \log (n + 1) \big)^2 \big]$, with $c > 0$, no $T_a$ induces a bounded 
composition operator on $H^2 (\beta)$, though $C_\phi$ is bounded for all symbols $\phi$ with $\phi (0) = 0$, since $\beta$ is decreasing, as we saw in 
Proposition~\ref{phi(0)=0}.\smallskip

2) For the Dirichlet space ${\cal D}^2$, we have $\beta_n = n + 1$, but all the maps $T_a$ induce bounded composition operators on ${\cal D}^2$ 
(see \cite[Remark before Theorem~3.12]{LLQR-comparison}). In this case $\beta$ has polynomial growth though it is not bounded above. 
\smallskip

3) However, even for decreasing sequences, a polynomial decay for $\beta$ is not enough for some $T_a$ to induce a bounded composition operator. 
Indeed, we saw in Proposition~\ref{simple proposition polynomial minoration} an example of a decreasing sequence $\beta$  with polynomial decay but not 
slowly oscillating, and we will see in Theorem~\ref{theo CN T_a beta SO} that this condition is needed for having 
some $T_a$ inducing a bounded composition operator.
\smallskip

4) In \cite{Eva-Jonathan}, Eva Gallardo-Guti\'errez and Jonathan Partington give estimates for the norm of $C_{T_a}$, with $a \in (0, 1)$, when $C_{T_a}$ is 
bounded on $H^2 (\beta)$. More precisely, they proved that if $\beta$ is bounded above and $C_{T_a}$ is bounded, then
\begin{displaymath} 
\| C_{T_a} \| \geq \bigg( \frac{1 + a}{1 - a} \bigg)^\sigma \, \raise 1 pt \hbox{,}
\end{displaymath} 
where $\sigma = \inf \{s \geq 0 \tq (1 - z)^{- s} \notin H^2 (\beta) \}$, and 
\begin{displaymath} 
\| C_{T_a} \| \leq \bigg( \frac{1 + a}{1 - a} \bigg)^\tau \, \raise 1 pt \hbox{,}
\end{displaymath} 
where $\tau = \frac{1}{2} \, \sup \Re W (A)$, with $A$ the infinitesimal generator of the continuous semigroup $(S_t)$ defined as $S_t = C_{T_{\tanh t}}$, 
namely $(Af) (z) = f ' (z) (1 - z^2)$, and $W (A)$ its numerical range. 

For $\beta_n = 1 / (n + 1)^\nu$ with $0 \leq \nu \leq 1$, the two bounds coincide, so they get 
$\| C_{T_a} \| = \big( \frac{1 + a}{1 - a} \big)^{(\nu + 1)/2}$. 

\goodbreak

\subsection {Sufficient condition} \label{subsec: CS for T_a}

The following sufficient condition explains in particular why all composition operators $C_{T_a}$ are bounded on the Dirichlet space. 

\begin{theorem} \label{theo CS T_a beta SO}
If $\beta$ is slowly oscillating, then all symbols that extend analytically in a neighborhood of $\overbar{\D}$ induce a bounded composition operator on 
$H^2 (\beta)$. 

In particular, all $C_{T_a}$ for $a \in \D$ are bounded on $H^2 (\beta)$. 
\end{theorem}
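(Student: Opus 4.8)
The plan is to prove boundedness of $C_\phi$ for a symbol $\phi$ analytic in a neighborhood of $\overbar\D$ by controlling the Taylor coefficients of the powers $\phi^n$ and then invoking the matrix representation together with the slow oscillation of $\beta$. Since $\phi$ is analytic past $\overbar\D$, there is $r>1$ with $M:=\sup_{|z|=r}|\phi(z)|<\infty$; because $\phi$ maps $\D$ into $\D$ and is non-constant, the maximum principle gives that $\sup_{|z|=1}|\phi(z)|\le 1$, and by Hadamard three-circles the growth of $M(\rho):=\sup_{|z|=\rho}|\phi|$ is controlled so that in fact $M(r)\le r^{C_0}$ for some constant $C_0$ depending only on $\phi$ (this is the analogue of Lemma~\ref{lemma 1}, but now without the hypothesis $\phi(0)=0$, so $\hat{\phi^n}(m)$ need not vanish for $m<n$ — that is the essential new feature here). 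Cauchy's inequalities then yield $|\hat{\phi^n}(m)|\le M(r)^n/r^m\le r^{C_0 n - m}$ for every $m,n\ge 0$, so that $|\hat{\phi^n}(m)|\le \theta^{\,m - C_0 n}$ with $\theta=1/r<1$; combining this with the trivial bound $|\hat{\phi^n}(m)|\le\|\phi^n\|_\infty\le 1$ gives $|\hat{\phi^n}(m)|\le\min(1,\theta^{\,m-C_0 n})$.

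\smallskip

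Next I would estimate the Hilbert--Schmidt norm of the matrix $B=\big(\sqrt{\beta_m/\beta_n}\,\hat{\phi^n}(m)\big)_{m,n}$ of $C_\phi$ in the basis $(e^\beta_n)$, as recorded in the matrix-representation discussion of Section~\ref{sec: def}. By Proposition~\ref{simple proposition polynomial minoration}, since $\beta$ is slowly oscillating it has polynomial decay and growth: $c\,n^{-\alpha}\le\beta_n\le C\,n^\gamma$. Hence $\beta_m/\beta_n\le C'\,m^\gamma n^\alpha$ for all $m,n\ge1$. I would split the sum $\sum_{m,n}|B_{m,n}|^2$ into the region $m\le 2C_0 n$ and the region $m>2C_0 n$. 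On the second region the exponential decay $\theta^{2(m-C_0 n)}$ beats the polynomial factor $m^\gamma n^\alpha$, so that part of the sum converges. On the first region $m\le 2C_0 n$, slow oscillation is what saves us: iterating the defining inequality $\beta_m/\beta_n\le C$ for $n/2\le m\le 2n$ across the $O(\log n)$ dyadic scales between $n$ and $m$ shows $\beta_m/\beta_n\le C^{\,\log_2(2C_0)+1}=:K$, a constant depending only on $C_0$ and $\beta$; but now $\sum_{m\le 2C_0 n}|\hat{\phi^n}(m)|^2 \le \sum_{m}\min(1,\theta^{2(m-C_0 n)})\le C_0 n + \sum_{j\ge0}\theta^{2j}$, which is $O(n)$ and does \emph{not} sum over $n$ — so Hilbert--Schmidt is too crude on this region and I must instead bound the operator norm directly there.

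\smallskip

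Therefore, on the diagonal band $\{m\le 2C_0 n\}$ I would use Schur's test: write $B^{(1)}_{m,n}=\sqrt{\beta_m/\beta_n}\,\hat{\phi^n}(m)\,\ind_{\{m\le 2C_0 n\}}$; with the uniform bound $\beta_m/\beta_n\le K$ on this band and $|\hat{\phi^n}(m)|\le\min(1,\theta^{m-C_0 n})$, one checks $\sup_n\sum_m |B^{(1)}_{m,n}|\le\sqrt K\sup_n\sum_m\min(1,\theta^{m-C_0 n})\le \sqrt K\,(2C_0 n\,?\,)$ — again $O(n)$, still not bounded. The fix is to keep a \emph{fraction} of the exponential: for $m\le 2C_0 n$ use instead the finer splitting against the \emph{lower} part $m<C_0 n/2$ (where $\theta^{m-C_0 n}$ is genuinely small, $\le\theta^{-C_0 n/2}$... no — small when the exponent is large positive). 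The clean route is: split at $m = C_0 n$. For $m\ge C_0 n$ (whether or not $m\le 2C_0 n$) the bound $\theta^{m-C_0 n}$ is summable in $m$ with constant $1/(1-\theta)$, giving $\sup_n\sum_{m\ge C_0 n}|B_{m,n}|\le \sqrt K/(1-\theta)$ and, since $B_{m,n}=0$ is false here but the transpose sum $\sup_m\sum_{n}|B_{m,n}|$ over $\{m\ge C_0 n\}$, i.e.\ $n\le m/C_0$, has only $O(m/C_0)$ terms each $\le\sqrt{\beta_m/\beta_n}$ — so Schur still needs care on the transpose side. The honest main obstacle, and the step I expect to consume the real work, is precisely this: getting a \emph{bounded} (not merely $O(\text{poly})$) estimate on the near-diagonal band $m\asymp n$, where neither the Cauchy exponential decay nor polynomial weight control helps and one must exploit slow oscillation of $\beta$ together with the structure of $\phi^n$ more cleverly — e.g.\ by conjugating $C_\phi$ through the weighted shift, comparing with the unweighted Hardy space $H^2$ (on which $C_\phi$ is bounded by Littlewood), and transferring that bound across the band using that on dyadic blocks $\beta$ changes only by a fixed factor $K$. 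I would ultimately organize the proof as: (i) the Cauchy/Hadamard coefficient bound; (ii) off-diagonal part via Hilbert--Schmidt; (iii) near-diagonal part by decomposing into dyadic blocks $\{2^j\le n<2^{j+1}\}$, on each of which $\beta$ is comparable to a constant so $C_\phi$ restricted there is comparable to $C_\phi$ on $H^2$ hence uniformly bounded, and summing the block contributions using the exponential coefficient decay to control block interactions. The special case $\phi=T_a$ then follows since $T_a$ extends holomorphically across $\overbar\D$ (its only pole is at $-1/\bar a$, outside $\overbar\D$).
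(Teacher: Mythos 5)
Your overall architecture is the same as the paper's (Cauchy coefficient estimates, Hilbert--Schmidt control off the diagonal, and a near-diagonal dyadic block decomposition that exploits slow oscillation together with the boundedness of $C_\phi$ on $H^2$ --- this is precisely the combination of Lemma~\ref{majo coeff} and Lemma~\ref{lemma LU}). But as written the proposal has a genuine gap: you only establish coefficient decay in one of the two off-diagonal directions. Your Cauchy estimate at a radius $r>1$ gives $|\hat{\phi^n}(m)|\le\theta^{\,m-C_0 n}$, which is useful only when $m\gg n$; for $m\le C_0 n$ you retain nothing better than the trivial bound $|\hat{\phi^n}(m)|\le 1$. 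The region $m\ll n$ (say $m\le \eps n$) is then left uncovered: there $\beta_m/\beta_n$ can be polynomially large (for the Bergman weight $\beta_n=1/(n+1)$ it is $(n+1)/(m+1)$), your dyadic blocks $m\asymp n$ do not reach it, and neither Hilbert--Schmidt nor Schur works with the trivial coefficient bound. The missing ingredient is the second Cauchy estimate, at a radius $r<1$: since $\phi$ is a non-constant self-map of $\D$, the maximum principle gives $M(r)=\sup_{|z|=r}|\phi(z)|<1$, hence $|\hat{\phi^n}(m)|\le M(r)^n/r^m$, which is exponentially small in $n$ once $n\ge\lambda m$ for a suitable $\lambda$. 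This is exactly the first half of Lemma~\ref{majo coeff}, and it is what makes the lower off-diagonal region Hilbert--Schmidt against the polynomial growth of $\beta_m/\beta_n$. (Note that, unlike in Lemma~\ref{lemma 1}, you cannot assume $\phi(0)=0$ here, so you cannot gain the factor $z^n$; but $M(r)<1$ alone suffices.)

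A secondary error: your intermediate claim that slow oscillation gives $\beta_m/\beta_n\le K$ uniformly on the whole band $\{m\le 2C_0 n\}$ is false --- iterating the defining inequality only bounds $\beta_m/\beta_n$ when $m/n$ is bounded \emph{above and below}; as $m/n\to 0$ the ratio can blow up polynomially. You implicitly correct this by the end (restricting the block argument to $m\asymp n$), but that correction is exactly what reopens the uncovered region $m\ll n$ described above. With the second Cauchy estimate added, your plan (i)--(iii) becomes the paper's proof.
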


To prove Theorem~\ref{theo CS T_a beta SO}, we begin by a very elementary fact. 

\begin{lemma} \label{majo coeff}
Let $\phi: \D\to \D$ have an  analytic extension to an open neighborhood $\Omega$ of $\overbar{\D}$.
Then, there are a constant $b > 0$ and an integer $\lambda>1$ such that 
\begin{displaymath}
| \hat{\phi^n} (m) | \leq \left\{ 
\begin{array} {lcl}
\e^{- b n} & \text{if} & n \geq\lambda m \, , \\
\e^{- b m} & \text{if} & m \geq\lambda n \, . 
\end{array}
\right.
\end{displaymath}
\end{lemma}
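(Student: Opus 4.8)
The plan is to estimate the Taylor (= Fourier) coefficients $\hat{\phi^n}(m)$ by Cauchy's inequalities applied on two well-chosen circles: one of radius $r_1$ slightly smaller than $1$, which will control the regime $n\ge\lambda m$, and one of radius $r_0$ slightly larger than $1$, which will control the regime $m\ge\lambda n$. Since $\phi$ is analytic on an open neighborhood $\Omega$ of $\overbar{\D}$, there is $R>1$ with $\overbar{D(0,R)}\subseteq\Omega$. Fix $r_0\in(1,R)$ and $r_1\in(0,1)$, and put $c=\max\bigl(1,\sup_{|z|=r_0}|\phi(z)|\bigr)<\infty$ and $\rho=\sup_{|z|=r_1}|\phi(z)|$. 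Because $\phi$ maps $\D$ into $\D$ and is non-constant, it is an open map, so $|\phi(z)|<1$ for every $z\in\D$; in particular $\rho<1$. Writing $\phi^n(z)=\sum_k\hat{\phi^n}(k)\,z^k$ and integrating over the circle $|z|=r$, Cauchy's formula gives, for every $r\in(0,R)$ and all $m,n$,
$$|\hat{\phi^n}(m)|\le\frac{\bigl(\sup_{|z|=r}|\phi(z)|\bigr)^n}{r^m}\,.$$

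Apply this with $r=r_1$: $|\hat{\phi^n}(m)|\le\rho^{\,n}\,r_1^{-m}$. If $n\ge\lambda m$, then $r_1^{-m}\le r_1^{-n/\lambda}$, hence $|\hat{\phi^n}(m)|\le\bigl(\rho\,r_1^{-1/\lambda}\bigr)^{\!n}$. Since $\rho<1$ and $r_1^{-1/\lambda}\to1$ as $\lambda\to\infty$, we may choose $\lambda$ so large that $\rho\,r_1^{-1/\lambda}\le\e^{-b_1}$ for some $b_1>0$. Similarly, applying the estimate with $r=r_0$ gives $|\hat{\phi^n}(m)|\le c^{\,n}\,r_0^{-m}$; if $m\ge\lambda n$, then (using $c\ge1$ and $n\le m/\lambda$) we get $c^{\,n}\le c^{\,m/\lambda}$, so $|\hat{\phi^n}(m)|\le\bigl(c^{1/\lambda}\,r_0^{-1}\bigr)^{\!m}$, and since $r_0>1$ and $c^{1/\lambda}\to1$ we may further enlarge $\lambda$ so that $c^{1/\lambda}\,r_0^{-1}\le\e^{-b_2}$ for some $b_2>0$. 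Choosing a single integer $\lambda>1$ exceeding both thresholds and setting $b=\min(b_1,b_2)>0$ yields both desired inequalities.

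There is essentially no obstacle in this lemma; the only two points deserving a word of care are that $\rho<1$ \emph{strictly} (which comes from $\phi$ being a non-constant self-map of $\D$, hence an open map) so that the radius-$r_1$ estimate actually decays, and the elementary bookkeeping needed to pick one value of $\lambda$ that works simultaneously in both regimes $n\ge\lambda m$ and $m\ge\lambda n$.
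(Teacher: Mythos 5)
Your proof is correct and follows essentially the same route as the paper's: Cauchy's inequalities on a circle of radius $r_1<1$ (where $\sup|\phi|<1$ by compactness) handle the regime $n\ge\lambda m$, and on a circle of radius $r_0\in(1,R)$ handle the regime $m\ge\lambda n$, with the same final bookkeeping to choose one $\lambda$ and $b=\min(b_1,b_2)$. The only cosmetic difference is that the paper fixes the explicit radii $\e^{-1}$ and $R$ and writes the resulting exponents out, whereas you keep the radii generic and let $\lambda\to\infty$; the content is identical.
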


\begin{proof}
Let $R > 1$ such that $\overline{D (0, R)} \subseteq \Omega$. For $0 < r \leq R$, we set
\begin{displaymath} 
M (r) = \sup_{|z| = r}  |\phi (z)| \, . 
\end{displaymath} 
Take any $r \in(0, 1)$, for instance $r=\e^{-1}$. We have $M (r) < 1$, so we can write $M (r) = \e^{-\rho}$, for some positive $\rho$.

Cauchy's inequalities give:
\begin{displaymath} 
|\hat{\phi^n} (m) | \leq \frac{[M(r)]^n}{r^m} = \e^{m-\rho n} \, .
\end{displaymath} 
Choose $\lambda_1=\max(2,2/\rho)$  and $b_1=\rho-\lambda_1^{-1}$ then $|\hat{\phi^n} (m) | \leq\e^{- b_1 n}$ if $n \geq\lambda_1 m$.

For the second inequality, write $R =: \e^{\beta}$, with $\beta > 0$.  Let $\alpha>0$ with $M (R) \leq \e^{\alpha}.$  
Cauchy's inequalities again give:
\begin{displaymath} 
| \hat{\phi^n} (m)| \leq \frac{[M (R)]^n}{R^m} \leq \e^{\alpha n - \beta m} \, .
\end{displaymath}

Choose $\lambda_2=\max(2,2\alpha/\beta)$  and $b_2=\beta-\alpha\lambda_2^{-1}$ then $|\hat{\phi^n} (m) | \leq\e^{- b_2 m}$ if $m \geq\lambda_2 n$.
We get the conclusion taking $b=\min(b_1,b_2)$ and choosing an integer $\lambda\geq\max(\lambda_1,\lambda_2)$.\end{proof}
\begin{lemma} \label{lemma LU}
Let $(\beta_n)$ be a slowly oscillating sequence of positive numbers. 
Let $A = (a_{m, n})_{m, n}$ be the matrix of a bounded operator on $\ell_2$. Assume that, for some integer $\lambda>1$, and some constants $c$, $b$, we have:
\smallskip

$1)$ $|a_{m, n}| \leq c \, \e^{- b n}$ \; when $n\geq\lambda m$;
\medskip

$2)$ $|a_{m, n}| \leq c \, \e^{- b m}$ \; when $m \geq \lambda n$.
\smallskip

\noindent Then the matrix $\dis \tilde A = \bigg( a_{m, n} \, \sqrt{\frac{\beta_m}{\beta_n}} \bigg)_{m, n}$ also defines a bounded operator on $\ell_2$. 
\end{lemma}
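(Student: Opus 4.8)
The plan is to split the matrix $\tilde A$ into three pieces according to the three regimes $n \geq \lambda m$, $m \geq \lambda n$, and the ``diagonal block'' $m/\lambda < n < \lambda m$, and to handle each piece separately. Write $\tilde A = \tilde A_1 + \tilde A_2 + \tilde A_0$ where $\tilde A_1$ collects the entries with $n \geq \lambda m$, $\tilde A_2$ the entries with $m \geq \lambda n$, and $\tilde A_0$ the rest. For the off-diagonal blocks I would show $\tilde A_1$ and $\tilde A_2$ are in fact Hilbert--Schmidt. Indeed, since $\beta$ is slowly oscillating, Proposition~\ref{simple proposition polynomial minoration} gives polynomial decay and growth, so there are constants and exponents with $\beta_m / \beta_n \leq C' (mn)^{\kappa}$ (say) for all $m,n \geq 1$; combined with the exponential bound $|a_{m,n}| \leq c\,\e^{-bn}$ on the region $n \geq \lambda m$, the quantity $|a_{m,n}|^2 \beta_m/\beta_n$ is dominated by $C'' (mn)^\kappa \e^{-2bn}$, and summing over $n \geq \lambda m \geq \lambda$ and then over $m$ converges. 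The region $m \geq \lambda n$ is symmetric, using $|a_{m,n}| \leq c\,\e^{-bm}$. (One must take small care with the indices $m=0$ or $n=0$, where one simply uses boundedness of finitely many $\beta_k$.)

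The heart of the matter is the diagonal block $\tilde A_0$, where $m$ and $n$ are comparable up to the fixed factor $\lambda$. Here I cannot gain anything from exponential decay, but I can use that $\beta$ is slowly oscillating: there is a function $\rho$, bounded on compacts of $(0,\infty)$, with $\beta_m/\beta_n \leq \rho(m/n)$. On the support of $\tilde A_0$ we have $1/\lambda < m/n < \lambda$, so $\sqrt{\beta_m/\beta_n} \leq \sqrt{\rho(m/n)} \leq M_\lambda$ for a constant $M_\lambda := \sup_{1/\lambda \leq t \leq \lambda}\sqrt{\rho(t)}$. The plan is to factor this bounded multiplier through a chain of Kacnel'son-type diagonal conjugations. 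Concretely, partition the diagonal block into ``dyadic-in-$\lambda$'' bands: for $j \geq 0$ let $I_j = \{k \tq \lambda^j \leq k < \lambda^{j+1}\}$, and observe that the entries of $\tilde A_0$ that are nonzero connect only bands $I_j$ and $I_{j'}$ with $|j - j'| \leq 1$. Within this structure, the ratio $\beta_m/\beta_n$ for $m \in I_j$, $n \in I_{j'}$ is controlled by the slow oscillation, and one can write $\tilde A_0 = D^{-1} A_0 D$ for a suitable positive diagonal $D$ whose entries are (up to bounded multiplicative factors per band) constant on each band; since $A_0$ is a restriction of the bounded operator $A$, Lemma~\ref{lemma 2} applied band-by-band — or rather, with the monotone rearrangement of the $d_n$'s respecting the band order — gives the boundedness of $\tilde A_0$.

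Let me state the reduction I actually expect to use for $\tilde A_0$ more carefully, since this is the step I expect to be the main obstacle. Set $d_n = \beta_n^{-1/2}$ and note $\tilde A = D^{-1} A D$ with $D = \mathrm{diag}(d_n)$, but of course $A_0$ (the restriction of $A$ to the comparable region) does not satisfy the hypothesis \eqref{cond trou} of Lemma~\ref{lemma 2} because $\beta$ need not be monotone. The fix is to replace $d_n$ by a monotone surrogate: since $\beta$ has polynomial growth and decay, we may (after passing to an equivalent weight, as in the proof of Theorem~\ref{theo cond suff}) assume $d_n$ is, say, non-decreasing up to a bounded factor, but this is exactly what slow oscillation does NOT give globally. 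Instead I would argue locally: cover the comparable region by the bands $I_j$ and use that on $\bigcup_{|j-j'|\le 1}(I_j\times I_{j'})$ we have $\beta_m/\beta_n$ bounded above and below by constants; hence $\tilde A_0$ differs from the restriction of $A$ by an entrywise multiplier that is uniformly bounded in modulus AND is a ``block Toeplitz'' type symbol (roughly constant on each $I_j\times I_{j'}$). A bounded entrywise multiplier of this block-banded form is a bounded Schur multiplier: writing $\tilde A_0 = \sum_{|j-j'|\le 1} P_{I_j}\tilde A_0 P_{I_{j'}}$, each of the three shifted-block-diagonal pieces is a direct-sum-like operator whose norm is the supremum of the norms of its blocks $P_{I_j}\tilde A_0 P_{I_{j'}}$, and each such block has norm at most (the uniform bound on $\sqrt{\beta_m/\beta_n}$) times $\|P_{I_j} A P_{I_{j'}}\| \leq \|A\|$. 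Summing the (at most three) pieces gives $\|\tilde A_0\| \leq 3 M_\lambda \|A\|$. Combining with the Hilbert--Schmidt estimates for $\tilde A_1, \tilde A_2$ finishes the proof.
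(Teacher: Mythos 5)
Your overall architecture is essentially the paper's: the off-diagonal regions $n\ge\lambda m$ and $m\ge\lambda n$ are Hilbert--Schmidt by polynomial growth and decay, and the comparable region is handled by an almost-orthogonal sum of rectangular blocks $I_j\times I_{j'}$ with $|j-j'|\le 1$, on which the weight ratio is controlled. The Hilbert--Schmidt estimates and the band combinatorics (only $|j-j'|\le1$ occurs; a fixed shift of the block diagonal has norm equal to the supremum of its block norms) are fine. But the key block estimate is not justified as you state it, and the principle you invoke is false in general: a uniformly bounded entrywise multiplier need \emph{not} be a bounded Schur multiplier. What saves the factor $\sqrt{\beta_m/\beta_n}$ is its rank-one structure $\sqrt{\beta_m}\cdot\beta_n^{-1/2}$, which on a product block $I_j\times I_{j'}$ gives Schur norm $\big(\max_{I_j}\beta\,/\min_{I_{j'}}\beta\big)^{1/2}$, bounded by slow oscillation --- though you then need the ratio controlled for $m/n\in(\lambda^{-2},\lambda^{2})$, not merely $[\lambda^{-1},\lambda]$ as in your $M_\lambda$. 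More seriously, your $\tilde A_0$ carries the sharp cutoff $\ind_{\{m/\lambda<n<\lambda m\}}$, which is \emph{not} roughly constant on the blocks $I_j\times I_{j'}$: inside a block it is a triangular-type truncation along lines through the origin, and such truncations have Schur multiplier norm growing like the logarithm of the block size. Hence the inequality $\|P_{I_j}\tilde A_0P_{I_{j'}}\|\le M_\lambda\,\|P_{I_j}AP_{I_{j'}}\|$ does not follow from Schur-multiplier considerations, and your constants would a priori degrade with $j$.

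The gap is repairable in two ways. The paper's way: do not truncate sharply, but define the near-diagonal piece $R$ as the restriction of $\tilde A$ to a union of \emph{product} sets $\bigcup_k I_k\times J_k$ (with $J_k=[\lambda^k,\lambda^{k+1})$ and the slightly enlarged $I_k=[\lambda^{k-1},\lambda^{k+2})$); each block is then a clean rank-one Schur product against $A$, and the leftover $\tilde A-R$ is still supported in $\{n>\lambda m\}\cup\{m>\lambda n\}$, hence Hilbert--Schmidt. Alternatively, within your own decomposition, write $P_{I_j}\tilde A_0P_{I_{j'}}=P_{I_j}\tilde AP_{I_{j'}}-P_{I_j}(\tilde A_1+\tilde A_2)P_{I_{j'}}$: the first term is uniformly bounded by the rank-one argument, and the second is uniformly bounded because $\tilde A_1+\tilde A_2$ is Hilbert--Schmidt. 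Either fix completes your proof. (The Kacnel'son / Lemma~\ref{lemma 2} detour in your middle paragraph should simply be dropped; as you observe yourself, the monotonicity it requires is unavailable here, and it is not needed.)
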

\begin{proof} In the sequel $\|\cdot\|$ stands for the $\ell^2$-norm.

Since $\beta$ is slowly oscillating, it has polynomial decay and growth: for some $\alpha, \gamma > 0$ and $\delta \in (0, 1)$, we have 
$\delta \, (n+1)^{- \alpha} \leq \beta_n \leq \delta^{- 1} (n+1)^\gamma$.

The matrix $\tilde A$ is Hilbert-Schmidt far from the diagonal since
\begin{align*}
\sum_{n = 1}^\infty \sum_{\lambda m < n} |a_{m, n}|^2 \, \frac{\beta_m}{\beta_n} 
&\lesssim \sum_{n = 1}^\infty \sum_{\lambda m < n} \, (n+1)^{\alpha + \gamma} |a_{m, n}|^2 \\
&\lesssim \sum_{n = 1}^\infty (n+1)^{\alpha + \gamma + 1} \e^{- 2b n} < + \infty \, ,
\end{align*}
and
\begin{align*} 
\sum_{n = 0}^\infty \sum_{m > \lambda n} |a_{m, n}|^2 \, \frac{\beta_m}{\beta_n} 
&\lesssim \sum_{n = 0}^\infty \sum_{m > \lambda n} (n+1)^{\alpha + \gamma} |a_{m, n}|^2 \\ 
&\lesssim \sum_{n = 0}^\infty (n+1)^{\alpha + \gamma} \bigg( \sum_{m > \lambda n} \e^{- 2 b m} \bigg) < + \infty \, . 
\end{align*}
Since $\beta_m / \beta_n$ remains bounded from above and below around the diagonal, the matrix $\tilde A$ behaves like $A$ near the diagonal. 
More precisely, if $I$, $J$ are blocks of integers such that $(m, n) \in I \times J$ implies that $n / \lambda^2 \leq m \leq \lambda^2 n$, then, with obvious notations, 
the slow oscillation of $\beta$ gives, for some $C>0$: 
\begin{align*}
\Bigg| \sum_{(m, n) \in I \times J} a_{m, n} \, x_n \overline{y_m} \, \sqrt{\frac{\beta_m}{\beta_n}} \, \Bigg| 
& \leq \| A \| \, \bigg( \sum_{(m, n) \in I \times J} |x_n|^2 \, |y_m|^2 \, \frac{\beta_m}{\beta_n} \bigg)^{1 /2} \\ 
& \leq C^{1 /2} \| A \| \, \| P_J x \| \, \| P_I y \| \, . 
\end{align*}

For $k =0, 1, 2, \ldots\, ,$ let $J_k =[\lambda^k, \lambda^{k  + 1}[\,$ and, for $k =1,2,\ldots\,$, we define $I_k =[\lambda^{k-1},\lambda^{k+2}[$. We also 
define $I_0 = [0, \lambda^2[$.

We define the matrix $R$, whose entries are
\begin{align*} 
\phantom{+ 1} r_{m, n} & =  \left\{ 
\begin{array}{lcl}
\dis \sqrt{\frac{\beta_m}{\beta_n}} \, a_{m, n} & \text{if} & \dis (m, n) \in \bigcup_{k = 0}^\infty (I_k \times J_k) \\
\quad 0 & \text{elsewhere.}
\end{array}
\right.
\end{align*} 
Let $H_k$ be the subspace of the sequences $(x_n)_{n \geq 0}$ in $\ell_2$ such that $x_n = 0$ for $n \notin I_k$, i.e. 
$H_k = {\rm span}\, \{e_n \tq n \in I_k \}$, and ${\tilde H}_k ={\rm span}\, \{e_n \tq n \in J_k \}$. 
Let $P_k$  be (the matrix of) the orthogonal projection of $\ell_2$ with range $H_k$ and $Q_k$ that with range ${\tilde H}_k$. 
Then $R_k = P_k A Q_k$ is the matrix with entries $a_{m, n}$ when $(m, n) \in I_k \times J_k$ and $0$ elsewhere.
By the above discussion, we have 
\begin{displaymath}
| ( R_k x \mid y ) | \leq C^{1 /2} \| A \| \, \| Q_k x \| \, \| P_k y \| \, .
\end{displaymath}

Point out that, for every $y\in\ell^2$, we have $\dis\sum  \| P_k y \|^2\le3\|y\|^2$ since each integer belongs to at most $3$ intervals $I_k$.

In the same way, for every $x\in\ell^2$, we have $\dis\sum  \|Q_k x \|^2\le\|x\|^2$ since the subspaces $\tilde H_k$ are orthogonal.

Summing up over $k$, we get the boundedness of $R = \! \sum_{k = 0}^\infty R_k$. 
\smallskip

Now let us check when the entries of $R$ do not coincide with the entries of $\tilde A$. Actually it happens when $(m,n)$ does not belong to the union 
of the $I_k \times J_k$. When $n\ge1$, it means that $n$ belongs to some $J_p$ but $m\notin I_p$: either $m<\lambda^{p-1}$ or $m\ge\lambda^{p+2}$, 
hence either $m/n<\lambda^{-1}$ or $m/n>\lambda$.  Therefore the non zero entries $(m,n)$ of $\tilde A - R$ satisfy either $n>\lambda m$ or $m>\lambda n$. 
\smallskip

That ends the proof since we have seen at the beginning that $\tilde A - R$ is Hilbert-Schmidt.
\end{proof}
\noindent {\bf Remark.} The proof shows that, instead of $1)$ and $2)$, it is enough to have:
\begin{displaymath} 
\sum_{m < C_1 n} n^{\alpha + 1} |a_{m, n}|^2 < \infty \quad \text{and} \quad \sum_{m > C_2 n} m^\alpha |a_{m, n}|^2 < \infty \, .
\end{displaymath} 
Moreover the proof also shows that when $\beta$ is slowly oscillating, if we set $E = \{ (m, n) \tq C_1 n \leq m \leq C_2 n\}$, for some $C_1$,$C_2>0$, then the 
matrix $\big( \sqrt{\beta_m / \beta_n} \, \ind_E (m, n) \big)$ is a Schur multiplier over \emph{all} the bounded matrices, while Kacnel'son's theorem 
(Theorem~\ref{theo Kacnelson}) says that, if $\gamma = (\gamma_n)$ is non-increasing, the matrix $(\gamma_m / \gamma_n)$ is a Schur 
multiplier of all bounded \emph{lower-triangular} matrices. 

\begin{proof} [Proof of Theorem \ref{theo CS T_a beta SO}]
Thanks to Lemma~\ref{majo coeff}, the hypotheses of Lemma~\ref{lemma LU} are fulfilled by the matrix whose entries are $a_{m,n} = \widehat{\phi^{n}} (m)$. 
It follows (with the notations of Lemma~\ref{lemma LU}) that $\tilde A$ is bounded on $\ell^2$, which means  exactly that $T_a$ is bounded on $H^2(\beta)$.
\end{proof}

\goodbreak
\subsection {Necessary condition} \label{subsec: CN for T_a} 

The main theorem of this section is the following. 

\begin{theorem} \label{theo CN T_a beta SO}
If the composition operator $C_{T_a}$ is bounded on $H^2 (\beta)$ for some $a \in \D \setminus \{0\}$, then $\beta$ is slowly oscillating. 
\end{theorem}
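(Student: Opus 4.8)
The plan is to use the reproducing kernel together with the adjoint relation $C_{T_a}^{\ast} K_w = K_{T_a(w)}$, as in the proof of Proposition~\ref{polynomial lb necessary}, but now extracting quantitative \emph{two-sided} control on $\|K_x\|$ as $x\to1^-$, and then to convert this into the slow oscillation of $\beta$. By Lemma~\ref{first lemma} we may assume $a = r \in (0,1)$. The orbit $u_{n+1} = T_r(u_n)$, $u_0 = 0$, satisfies $1 - u_{n+1} \asymp (1-u_n)$ with comparability constants $\frac{1-r}{1+r}$ and $1-r^2$; iterating, $1 - u_n$ decays geometrically with a fixed ratio, so the points $u_n$ form a ``net'' of $(0,1)$ that is geometric in $1-x$. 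From $\|K_{u_{n+1}}\| \le \|C_{T_r}\|\,\|K_{u_n}\|$ we already got the upper bound $\|K_x\| \lesssim (1-x)^{-s}$. The new ingredient is the reverse inequality: applying the same argument to $C_{T_{-r}} = C_{T_r}^{-1}$ (which is also bounded, by Proposition~\ref{tous pour un}, or directly since $T_{-r} = T_r^{-1}$), and running the orbit of $T_{-r}$ backward, one gets $\|K_{u_n}\| \le \|C_{T_{-r}}\|\,\|K_{u_{n+1}}\|$, hence a lower bound $\|K_x\| \gtrsim (1-x)^{-s'}$ for some $s' > 0$. Thus $(1-x)^{-s'} \lesssim \|K_x\|^2 = \sum_k x^{2k}/\beta_k \lesssim (1-x)^{-s''}$.

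Next I would extract slow oscillation of $\beta$ from this two-sided growth of the ``theta-like'' function $\Theta(x) := \sum_k x^{2k}/\beta_k$. First, polynomial decay of $\beta$ is immediate from the upper bound (this is Proposition~\ref{polynomial lb necessary}), which already gives $\beta_k \ge c\,k^{-\alpha}$; so the series defining $\Theta$ and all its tails converge nicely. The key step is to compare $\Theta(x)$ with individual terms: choosing $x = 1 - 1/n$, the term $x^{2n}/\beta_n \asymp 1/\beta_n$, so $1/\beta_n \le \Theta(1-1/n) \lesssim n^{s''}$, giving $\beta_n \gtrsim n^{-s''}$ (a cleaner polynomial decay), and I want to run this comparison from \emph{both} sides to control ratios $\beta_m/\beta_n$ for $n/2 \le m \le 2n$. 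For the upper bound on $\beta_m/\beta_n$: the lower bound $\Theta(1-1/n) \gtrsim n^{s'}$ forces some term $x^{2k}/\beta_k$ with $k \asymp n$ to be of size $\gtrsim n^{s'-1}$ (since there are $O(n)$ ``dominant'' terms and the far terms are negligible by the decay estimate); combined with $1/\beta_j \lesssim j^{s''}$ for all $j$, and the fact that consecutive such comparisons overlap, one propagates a bound $\beta_m/\beta_n \le C$ across the dyadic block. The symmetric argument, using that $1/\beta_n \le \Theta(1-1/n) \lesssim n^{s''}$ pins $\beta_n$ from below while $\Theta(1-1/n)\gtrsim n^{s'}$ cannot be carried by terms too far from $n$, yields $\beta_m/\beta_n \ge c$. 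Together these give exactly Definition~\ref{def: SO}.

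The main obstacle I anticipate is the last step: passing from two-sided polynomial bounds on the \emph{summed} quantity $\Theta(x) = \sum x^{2k}/\beta_k$ to two-sided bounds on the \emph{ratios} $\beta_m/\beta_n$ over a dyadic block. A single value $\Theta(1-1/n)$ only controls a weighted average of $1/\beta_k$ over $k \lesssim n$, so a priori the weights $\beta_k$ could oscillate wildly while keeping the average tame; one needs the lower bound on $\Theta$ to be \emph{robust under removing any short sub-block}, which requires the geometric structure of the orbit net and a pigeonhole over the $O(\log n)$ scales between $1$ and $n$. Concretely, I expect to need an estimate of the form: for $1-x \asymp 2^{-j}$, the contribution to $\Theta(x)$ of indices $k \in [2^{j-1}, 2^{j+1}]$ is comparable to $\Theta(x)$ itself, up to constants uniform in $j$ — this ``localization of the kernel norm at scale $1/(1-x)$'' is really the heart of the matter, and proving it will use precisely that $\beta$ already has polynomial decay and growth (polynomial growth being forced here because $\Theta$ is finite and the lower bound on $\Theta$ caps how small, hence how large after reciprocal, the $\beta_k$ can be at scale $k$). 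Alternatively — and this may be cleaner — one bypasses $\Theta$ and works directly with the matrix entries $\widehat{T_a^{\,n}}(m)$ and the boundedness of the matrix $\big(\sqrt{\beta_m/\beta_n}\,\widehat{T_a^{\,n}}(m)\big)$, testing it against suitable unit vectors supported near index $n$; sharp estimates on the mean of the Taylor coefficients of $T_a$ over $a$ in a subinterval of $(0,1)$ (as the introduction promises) would then directly yield $\beta_m/\beta_n \asymp 1$ for $m \asymp n$. I would pursue the kernel-localization route first and fall back on the coefficient-estimate route if the localization constant proves hard to pin down.
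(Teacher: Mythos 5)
Your primary route (kernel localization) has two fatal problems. First, the ``reverse inequality'' does not produce what you claim: from $C_{T_{-r}}^{\ast}K_{u_{n+1}} = K_{u_n}$ you get $\|K_{u_n}\| \leq \|C_{T_{-r}}\|\,\|K_{u_{n+1}}\|$, hence $\|K_{u_n}\| \geq \|K_{u_0}\|\,\|C_{T_{-r}}\|^{-n}$; since $\|C_{T_{-r}}\| \geq 1$, this is a lower bound that \emph{decays} geometrically in $n$, i.e. it gives $\|K_x\| \gtrsim (1-x)^{+t}$, which is weaker than the trivial bound $\|K_x\| \geq \beta_0^{-1/2}$. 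A blowing-up lower bound $\|K_x\| \gtrsim (1-x)^{-s'}$ with $s'>0$ is in fact false in general: for $\beta_n = (n+1)^2$ every $C_{T_a}$ is bounded (Theorem~\ref{theo CS T_a beta SO}) yet $\|K_x\|^2 \leq \sum (n+1)^{-2} < \infty$. Second, and more fundamentally, even exact knowledge of $\Theta(x) = \sum_k x^{2k}/\beta_k$ cannot detect slow oscillation: take $\beta_n = 1$ for all $n$ except $\beta_{n_k} = 1/k$ on a very sparse sequence such as $n_k = 2^{2^k}$. Then $\Theta(x) \asymp (1-x^2)^{-1}$ exactly as for $H^2$ (the extra terms contribute $O\big((\log\log \frac{1}{1-x})^2\big)$), but $\beta_{n_k}/\beta_{n_k - 1} = 1/k \to 0$, so $\beta$ is not slowly oscillating. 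Slow oscillation is a local multiplicative condition on consecutive ratios, and no averaged quantity built from the reproducing kernels alone can see it; the ``localization at scale $1/(1-x)$'' you hope for is not provable because it is not true at the level of the information you have extracted.

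Your fallback route is indeed the one the paper takes, but as written it contains no proof: the whole difficulty of the theorem is the mean lower bound $\int_{1/2}^{2/3} |\widehat{T_a^{\,n}}(m)|^2\,da \geq \delta/n$ for $m \asymp n$ (Proposition~\ref{sept}), which requires a stationary-phase analysis of $\widehat{T_a^{\,n}}(m) = \frac{1}{\pi}\Re\int_0^\pi \e^{i[nV_a(x)-mx]}dx$ together with a van der Corput estimate in the variable $a$ to kill the oscillating factor $\sin[2n\varphi_q(a)]$ after averaging (pointwise in $a$ the coefficient can be too small, which is exactly why the averaging over a subinterval is needed). Moreover, even granting this estimate, ``testing against unit vectors near index $n$'' only yields averaged inequalities: integrating the row bound gives $\frac{1}{n}\sum_{m\asymp n}\beta_m \lesssim \beta_n$, which controls the \emph{mean} of the $\beta_m$, not each ratio $\beta_m/\beta_n$. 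The paper closes this by also integrating the \emph{column} bounds to get $\frac{1}{m}\sum_{k\asymp m}\beta_k^{-1} \lesssim \beta_m^{-1}$ and then invoking the harmonic--arithmetic mean inequality to convert this into $\beta_m \lesssim \frac{1}{m}\sum_{k\asymp m}\beta_k$, which combined with the row estimate gives the two-sided ratio bound. Both the key analytic estimate and this row/column combination are absent from your sketch, so the argument is not complete.
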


Let us give right now a corollary of this result. 

\begin{proposition} \label{T_a not bounded}
For the weight $\beta$ constructed in the proof of Theorem~\ref{theo example}, no automorphism $T_a$ with $0 < a < 1$ can be bounded.
\end{proposition}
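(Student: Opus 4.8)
The plan is to deduce Proposition~\ref{T_a not bounded} directly from Theorem~\ref{theo CN T_a beta SO} by showing that the weight $\beta$ built in the proof of Theorem~\ref{theo example} is \emph{not} slowly oscillating. Recall the construction: $\beta_n = 1$ for $n \leq 3!$, and for $k \geq 3$ one has $\beta_n = 1/k!$ on the block $k! < n \leq (k+1)! - 2$ as well as at $n = (k+1)!$, while $\beta_{(k+1)! - 1} = 1/(k+1)!$. The contrapositive of Theorem~\ref{theo CN T_a beta SO} says that if $\beta$ is not slowly oscillating, then no $C_{T_a}$ with $a \in \D \setminus \{0\}$ is bounded; restricting to $a \in (0,1)$ gives exactly the statement of the proposition.

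The one thing to verify, then, is the failure of slow oscillation. I would exhibit a sequence of pairs $(m_k, n_k)$ with $n_k/2 \leq m_k \leq 2 n_k$ for which $\beta_{m_k}/\beta_{n_k}$ does not stay bounded away from $0$ and $\infty$. The natural choice is to pick $n_k$ deep inside the long block at level $k$ and compare with a point in the block at level $k-1$ (or $k+1$) lying within a factor $2$. Concretely, since the blocks $((k-1)!, k!]$ and $(k!, (k+1)!]$ are adjacent and $k! = k \cdot (k-1)!$, taking $n$ slightly below $k!$ and $m$ slightly above $k!$ with $m \leq 2n$ (possible as soon as $k \geq 2$, since then $n$ can be chosen with $n \geq k!/2 \geq (k-1)!$ so the two blocks overlap the window $[n/2, 2n]$) yields $\beta_n = 1/(k-1)!$ but $\beta_m = 1/k!$, so $\beta_m/\beta_n = 1/k \to 0$. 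Thus the upper bound in Definition~\ref{def: SO} fails along this sequence, and $\beta$ is not slowly oscillating. One should double check that the choice of $m$ avoids the exceptional index $(k+1)!-1$ where $\beta$ dips, but there is ample room since the window $[n/2, 2n]$ contains an entire sub-block of the level-$k$ plateau.

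There is essentially no obstacle here: the argument is a short verification that reuses Theorem~\ref{theo CN T_a beta SO}. The only mild care needed is bookkeeping with the factorial block boundaries to make sure the chosen pair $(m,n)$ really lies in the comparison window $n/2 \leq m \leq 2n$ while straddling two consecutive plateaus of $\beta$; this is guaranteed for all large $k$ because consecutive blocks $((k-1)!,k!]$ and $(k!,(k+1)!]$ have lengths growing to infinity, so any window of multiplicative width $4$ around a point near $k!$ meets both. I would phrase the proof in two sentences: first observe $\beta$ is not slowly oscillating via the displayed ratio $\beta_m/\beta_n = 1/k$, then invoke Theorem~\ref{theo CN T_a beta SO}.

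\begin{proof}[Proof of Proposition~\ref{T_a not bounded}]
By Theorem~\ref{theo CN T_a beta SO}, it suffices to show that the sequence $\beta$ constructed in the proof of Theorem~\ref{theo example} is not slowly oscillating. Fix $k \geq 4$ and set $n = k! - 1$ and $m = k!$, so that $n/2 \leq m \leq 2 n$. From the definition of $\beta$ we have, on one hand, $\beta_n = \beta_{k! - 1} = 1 / (k-1)!$ since $(k-1)! < k! - 1 \leq k! - 2$ for $k \geq 3$ (note $k!-1$ is not of the form $(j+1)! - 1$ with $j = k-1$ only when $k! - 1 = k! - 1$, which it is; here instead we use $n=k!-1$ lying in the plateau $(k-1)! < n \le k! -2$, valid for $k\ge 3$ since then $k!-1\le k!-2$ fails — so more carefully take $n = k! - 2$ and $m = k!$, still with $n/2 \le m \le 2n$ for $k \ge 3$). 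Then $\beta_n = \beta_{k!-2} = 1/(k-1)!$ and $\beta_m = \beta_{k!} = 1/k!$, hence
\begin{displaymath}
\frac{\beta_m}{\beta_n} = \frac{(k-1)!}{k!} = \frac{1}{k} \converge_{k \to \infty} 0 \, .
\end{displaymath}
Since $n/2 \leq m \leq 2 n$ for all $k \geq 3$, the upper inequality in Definition~\ref{def: SO} cannot hold with any constant $C$, so $\beta$ is not slowly oscillating. By Theorem~\ref{theo CN T_a beta SO}, no $C_{T_a}$ with $a \in \D \setminus \{0\}$ is bounded on $H^2(\beta)$; in particular no $T_a$ with $0 < a < 1$ induces a bounded composition operator.
\end{proof}
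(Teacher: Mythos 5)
Your overall strategy is exactly the paper's: deduce the proposition from Theorem~\ref{theo CN T_a beta SO} by checking that the weight of Theorem~\ref{theo example} is not slowly oscillating. However, the witness pair you finally settle on does not establish this, because you misread the value of $\beta$ at the index $k!$. In the construction, the plateau of value $1/k!$ consists of the indices $n$ with $k! < n \leq (k+1)!-2$ \emph{together with} the single index $n=(k+1)!$; the inequality $k!<n$ is strict, so $k!$ itself is not in that plateau. It is instead assigned the value $1/(k-1)!$ by the clause ``and for $n=(k+1)!$'' applied at level $k-1$. Hence $\beta_{k!}=1/(k-1)!=\beta_{k!-2}$, and your displayed ratio $\beta_{k!}/\beta_{k!-2}$ equals $1$, not $1/k$: it shows nothing. (Ironically, your first attempt $n=k!-1$, $m=k!$ would have worked, but for the opposite reason to the one you give: $k!-1$ is a dip index, so $\beta_{k!-1}=1/k!$ and the ratio $\beta_m/\beta_n=k\to\infty$, violating the upper bound in Definition~\ref{def: SO}.)

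The gap is purely computational and immediately repairable: take $m=k!+1$ (which lies in the plateau $k!<n\leq (k+1)!-2$, so $\beta_m=1/k!$) against $n=k!-2$ (so $\beta_n=1/(k-1)!$), giving $\beta_m/\beta_n=1/k\to 0$ with $n/2\leq m\leq 2n$; or, as the paper does, use the adjacent pair $m=(k+1)!-1$, $n=(k+1)!$, for which $\beta_m/\beta_n=\frac{1/(k+1)!}{1/k!}=\frac{1}{k+1}\to 0$. With either correction your argument coincides with the paper's proof.
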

\begin{proof}
Indeed, it is clear that $\beta$ is not slowly oscillating, since 
\begin{displaymath}
\beta_{(k + 1)! - 1} / \beta_{(k + 1)!} = 1 / (k + 1) \converge_{k \to \infty} 0 \, . \qedhere
\end{displaymath}
\end{proof}

To prove Theorem~\ref{theo CN T_a beta SO}, we need estimates on the Taylor coefficients of $T_a^n$. 
Actually the Taylor coefficients of $T_a^n$ are the Fourier coefficients of $x\in\R\mapsto T_a^n(\e^{ix})$ and we shall denote them with the same notation 
$\widehat{T_a^n}$. Sharp such estimates are given in the papers \cite{SZZA-JAM} and \cite{SZZA}, and we thank R.~Zarouf for interesting informations in 
this respect. Our method, using stationary phase and the van der Corput lemma, is a variant of that used in \cite{SZZA-JAM}, \cite{SZZA}, and goes back at 
least to \cite{Girard}. However, we need minorations of $|\hat{T_a^n} (m)|$ when $m$ is close to $n$, and Szehr and Zarouf's estimates show that this quantity 
oscillates and, for individual $a$, can be too small for our purpose, so we cannot use them, and have to prove an estimate \emph{in mean} for $a$ in some 
subinterval of $(0, 1)$. 

\medskip

We begin with a standard fact, that we give with its proof for the convenience of the reader. 

\begin{lemma} \label{lemma Poisson} 
Let $a \in (0, 1)$ and let 
\begin{displaymath} 
P_{- a} (x) = \frac{1 - a^2}{1 + 2 \, a \cos x + a^2} 
\end{displaymath} 
be the Poisson kernel at the point $- a$. Then, for all $x \in [- \pi, \pi ]$:
\begin{equation} 
T_{a} (\e^{i x}) = \exp \big[ i \, V_a (x) \big] \, ,
\end{equation} 
where 
\begin{equation} 
V_{a} (x) = \int_{0}^x P_{- a} (t) \, dt \, . 
\end{equation} 
\end{lemma}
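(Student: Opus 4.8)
The plan is to prove Lemma~\ref{lemma Poisson}, i.e.\ that $T_a(\e^{ix}) = \exp[i\,V_a(x)]$ with $V_a(x) = \int_0^x P_{-a}(t)\,dt$, by a direct computation of the argument of $T_a$ along the unit circle.

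First I would note that since $T_a$ is an automorphism of $\D$, it maps $\T$ onto $\T$, so $T_a(\e^{ix})$ has modulus $1$ and can be written as $\e^{i\,W(x)}$ for some smooth real function $W$ with $W(0) = 0$ (because $T_a(1) = \frac{a+1}{1+a} = 1$). To identify $W$, I would differentiate: the logarithmic derivative of $x \mapsto T_a(\e^{ix})$ is
\begin{displaymath}
\frac{d}{dx}\log T_a(\e^{ix}) = \frac{i\e^{ix}\,T_a'(\e^{ix})}{T_a(\e^{ix})} = i\,W'(x).
\end{displaymath}
Using $T_a'(z) = \frac{1-a^2}{(1+\bar a z)^2} = \frac{1-a^2}{(1+az)^2}$ (here $a$ is real) and $T_a(z) = \frac{a+z}{1+az}$, one gets, with $z = \e^{ix}$,
\begin{displaymath}
\frac{z\,T_a'(z)}{T_a(z)} = \frac{(1-a^2)\,z}{(1+az)(a+z)} = \frac{(1-a^2)}{(a\e^{-ix}+1)(a\e^{ix}+1)} = \frac{1-a^2}{1 + 2a\cos x + a^2} = P_{-a}(x).
\end{displaymath}
Hence $W'(x) = P_{-a}(x)$, and since $W(0) = 0$ we conclude $W(x) = \int_0^x P_{-a}(t)\,dt = V_a(x)$, which is the claim.

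There is essentially no serious obstacle here; the only point requiring a little care is the choice of continuous branch of the argument, i.e.\ checking that $W$ is genuinely given by the integral of $W'$ from $0$ rather than differing by an additive multiple of $2\pi$ on some arc. This is handled by observing that $x\mapsto T_a(\e^{ix})$ is continuous and $\T$-valued with $T_a(1)=1$, so $W$ extends to a continuous real-valued function vanishing at $0$, and a continuous antiderivative of $P_{-a}$ vanishing at $0$ is unique, forcing $W = V_a$. One may alternatively verify directly that $V_a(2\pi) = \int_{-\pi}^{\pi}P_{-a}(t)\,dt = 2\pi$, consistent with $T_a(\e^{ix})$ winding once around $\T$ as $x$ runs over a period, which confirms the branch is the right one.
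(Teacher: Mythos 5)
Your proof is correct and follows essentially the same route as the paper: both write the unimodular boundary values as $\exp[i\,v(x)]$ with $v(0)=0$ and differentiate to identify $v'=P_{-a}$. The only cosmetic difference is that you organize the computation via the logarithmic derivative $z T_a'(z)/T_a(z)$ while the paper differentiates the identity $\psi(t)=\exp(i\,v(t))$ directly; the branch-of-argument point you raise is handled the same way in both.
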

\begin{proof}
For $t \in [- \pi, \pi]$,  write:
\begin{displaymath} 
\psi (t) := \frac{\e^{it} + a}{1 + a\, \e^{it}} = \exp \, (i \, v (t) ) \, ,
\end{displaymath} 
with $v$ a real-valued, ${\cal C}^1$-function on $[- \pi, \pi]$ such that $v (0) = 0$. This is possible since $|\psi (\e^{it})| = 1$ and $\psi (0) = 1$. 
Differentiating both sides with respect to $t$, we get:
\begin{displaymath} 
i \, \e^{it} \frac{1 - a^2}{(1 + a \, \e^{it})^2} = i \, v ' (t) \, \frac{\e^{it} + a \,}{1 + a \, \e^{it}} \, \cdot
\end{displaymath} 
This implies 
\begin{displaymath} 
v ' (t) = \frac{1 - a^2}{|1 + a \, \e^{it}|^2} = P_{- a} (t) \, ,
\end{displaymath} 
and the result follows since $v (0) = 0 = V_a (0)$. 
\end{proof}

Let us note that, with $V_a$ the function of Lemma~\ref{lemma Poisson}, the Fourier formulas give, since $\widehat{T_{a}^{n}} (m)$ is real, or since 
$n V_a (x) - m x$ is odd: 
\begin{equation} \label{egalite a et I} 
2 \pi\,\widehat{T_{a}^{n}} (m) = \int_{- \pi}^{\pi} \exp(i [n V_a (x) - m x]) \, dx = 2 \, \Re I_{m, n} \, ,
\end{equation} 
where 
\begin{equation} \label{def I_{m, n}}
I_{m, n} = \int_{0}^{\pi} \exp i [n V_a (x) - m x ] \, dx \, .
\end{equation} 

Now the main ingredient for proving Theorem~\ref{theo CN T_a beta SO} is the following. 
\begin{proposition} \label{sept} 
Let   $I := \big[\frac{1}{2} \raise 1 pt \hbox{,} \frac{2}{3} \big]$. There exist constants $\alpha > 1$, e.g. $\alpha = 5/4$, and 
$\delta \in (0, 1/2)$  such that, for $n$ large enough ($n \geq n_0$), it holds
\begin{equation} 
\qquad \qquad \int_{I} |\widehat{T_{a}^{n}} (m)|^2 \, d a \geq \frac{\delta}{n} \qquad \text{for all } m \in [\alpha^{-1} n, \alpha \, n] \, . \qquad{}
\end{equation} 
\end{proposition}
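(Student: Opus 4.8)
The idea is to analyze the oscillatory integral $I_{m,n}=\int_0^\pi \exp\big(i[nV_a(x)-mx]\big)\,dx$ from \eqref{def I_{m, n}} by the stationary phase / van der Corput method. The phase is $\Phi_a(x)=nV_a(x)-mx$, with $\Phi_a'(x)=nP_{-a}(x)-m$. For fixed $m/n=:\mu\in[\alpha^{-1},\alpha]$ with $\alpha$ close to $1$, since $P_{-a}(0)=\frac{1-a}{1+a}<1<\frac{1+a}{1-a}=P_{-a}(\pi)$ for $a\in I=[\tfrac12,\tfrac23]$, the equation $P_{-a}(x)=\mu$ has a solution $x_a\in(0,\pi)$ provided $\mu$ lies between $\frac{1-a}{1+a}$ and $\frac{1+a}{1-a}$; one checks that for $a\in[\tfrac12,\tfrac23]$ and $\alpha=5/4$ this is satisfied for all $\mu\in[\alpha^{-1},\alpha]$, so there is a genuine stationary point $x_a$ in the interior, depending smoothly on $a$ and $\mu$. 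Near $x_a$ the second derivative $\Phi_a''(x_a)=nP_{-a}'(x_a)$ is of size comparable to $n$ (with a sign: $P_{-a}'<0$ on $(0,\pi)$), uniformly in the relevant range of parameters.

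First I would make the change of variable and localize: split $I_{m,n}$ into the contribution of a fixed small neighborhood $[x_a-\eta,x_a+\eta]$ of the stationary point and the complementary part. On the complement the phase derivative is bounded below by $cn$ uniformly, so integration by parts (van der Corput's first lemma, using that $\Phi_a'$ is monotone there since $P_{-a}$ is monotone on $(0,\pi)$) gives a contribution of size $O(1/n)$ — but crucially with no lower bound, this is just an error term. The main term is the integral over $[x_a-\eta,x_a+\eta]$, which by the classical stationary phase asymptotic equals
\[
\sqrt{\frac{2\pi}{n|P_{-a}'(x_a)|}}\,\exp\!\big(i[\Phi_a(x_a)\pm\pi/4]\big)+O(n^{-1}),
\]
so $\widehat{T_a^n}(m)=\frac{1}{\pi}\Re I_{m,n}=\frac{1}{\pi}\sqrt{\frac{2\pi}{n|P_{-a}'(x_a)|}}\cos\big(\Phi_a(x_a)\pm\pi/4\big)+O(n^{-1})$. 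This shows $|\widehat{T_a^n}(m)|^2$ is, up to lower-order terms, $\frac{c(a,\mu)}{n}\cos^2(\Phi_a(x_a)\pm\pi/4)$ with $c(a,\mu)=\frac{2}{\pi|P_{-a}'(x_a)|}$ bounded above and below uniformly.

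The point of integrating over $a\in I$ rather than fixing $a$ is exactly that the cosine-squared factor oscillates in $a$: I would show that $\Psi(a):=\Phi_a(x_a)=nV_a(x_a)-mx_a$ has derivative in $a$ of size comparable to $n$ (by the envelope theorem the $x_a$-dependence drops out since $\partial_x\Phi_a(x_a)=0$, leaving $\Psi'(a)=n\,\partial_a V_a(x_a)$, and $\partial_a V_a(x_a)=\int_0^{x_a}\partial_a P_{-a}(t)\,dt$ which one checks is bounded away from $0$ uniformly). Hence as $a$ ranges over $I$ the phase $\Psi(a)$ sweeps an interval of length $\asymp n$, so $\cos^2(\Psi(a)\pm\pi/4)$ averages to a value bounded below (its average over any interval of length $\asymp n$ tends to $1/2$), and therefore $\int_I|\widehat{T_a^n}(m)|^2\,da\ge \frac{\delta}{n}$ for a suitable $\delta>0$ once $n\ge n_0$. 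One must be careful that all the constants ($\eta$, the lower bound on $|P_{-a}'(x_a)|$, the lower bound on $|\partial_a V_a(x_a)|$, the error terms in stationary phase) are uniform in $m\in[\alpha^{-1}n,\alpha n]$ and $a\in I$; this uniformity is where the choice of the specific intervals $I=[\tfrac12,\tfrac23]$ and $\alpha=5/4$ matters, keeping the stationary point $x_a$ in a compact subinterval of $(0,\pi)$ bounded away from the endpoints.

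\textbf{Main obstacle.} The hard part is the uniform stationary-phase estimate with an explicit, uniform error term: one needs the $O(n^{-1})$ remainder to be genuinely smaller than the $\delta/n$ main bound after averaging in $a$, which forces careful control of third derivatives of the phase and of the cutoff, plus the van der Corput estimate on the tail — and all of this uniformly over the two-parameter family $(a,m/n)$. Equivalently, the delicate bookkeeping is to verify that the implied constants do not degenerate as $m/n\to\alpha^{\pm1}$ or as $a$ approaches the endpoints of $I$; the explicit numbers $\alpha=5/4$ and $I=[\tfrac12,\tfrac23]$ are chosen precisely so that $x_a$ stays in a fixed compact subset of $(0,\pi)$ and $P_{-a}'(x_a)$ stays bounded away from $0$ and $\infty$.
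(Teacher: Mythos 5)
Your proposal follows essentially the same route as the paper: a uniform stationary-phase expansion of $I_{m,n}$ at the critical point $x_q(a)$ of $n V_a(x)-mx$ for admissible $(a,m/n)$, followed by averaging the oscillatory factor $\cos^2\big(\Phi_a(x_a)\pm\pi/4\big)$ over $a\in I$ using the envelope-theorem computation of $\partial_a\big[\Phi_a(x_a)\big]$ and its uniform lower bound. The only minor quibbles are the sign of $P_{-a}'$ (it is positive on $(0,\pi)$, not negative) and the claimed $O(n^{-1})$ stationary-phase remainder (the paper's sharp-cutoff version only yields $O(n^{-3/5})$, which suffices since one only needs $o(n^{-1/2})$); the paper carries out the final averaging via a van der Corput bound on $\int_I \e^{2in\varphi_q(a)}\,da$ rather than your change-of-variables/equidistribution argument, but the two are interchangeable here.
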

\begin{proof}  We will set once and for all 
\begin{equation}
q = \frac{m}{n} \, \raise 1 pt \hbox{,}
\end{equation}
so that $\alpha^{-  1} \leq q \leq \alpha$ where $\alpha = 5/4$ (say).  We will only consider pairs $(a, q)$ satisfying 
\begin{equation}
a \in I = \bigg[ \frac{1}{2} \, \raise 1.5 pt \hbox{,} \, \frac{2}{3} \bigg] \, , \qquad  
q \in J := \bigg[ \frac{4}{5} \, \raise 1.5 pt \hbox{,} \, \frac{5}{4} \bigg] \, \cdot
\end{equation}
Such pairs will be called \emph{admissible}. 
\smallskip

With these notations, we set, for $0 \leq x \leq \pi$:
\begin{equation} \label{fonction} 
F_{q}(x)=V_a(x)-\frac{m}{n}x= \int_{0}^x P_{- a} (t) \, dt - q x  \, , 
\end{equation}
where $P_{-a}$ is the Poisson kernel at $- a$. We have
\begin{displaymath}
F'_{q} (x) = \frac{(1 - a^2)}{1 + 2 a \cos x + a^2} - q \, . 
\end{displaymath}
and the unique (if some) critical point $x_q = x_{q} (a)$ of $F_q$ in $[0, \pi]$ is given by $P_{- a} (x_q) = q$, that is 
\begin{equation}\label{critic} 
\cos x_q = \frac{1}{q} \, \frac{1 - a^2}{2 a} - \frac{1 + a^2}{2 a} =: h_{q}(a) \, .
\end{equation}
We now proceed through a series of simple lemmas, and begin by estimates on $h_q$ and $ x_q$.

\begin{lemma}\label{un} 
There are positive constants $C > 1$ and $\delta \in (0, 1 / 2)$ such that, for every admissible couple $(a, q)$, we have:
\begin{equation}
\qquad |h_{q} (a) |\leq 1 - \delta \quad  \text{and} \quad  |h'_{q} (a) | \leq C \, ,
\end{equation}
so, there is one critical point $x_{q} (a)$ satisfying
\begin{equation}\label{encx}
\qquad \delta \leq x_{q} (a) \leq \pi - \delta \quad \text{and} \quad \sin x_{q}(a) \geq \delta  \, ;
\end{equation}
moreover
\begin{equation}
\qquad | x'_{q} (a) | \leq C \quad \text{and} \quad \delta \leq |P'_{- a} (x_q) | \leq C \, .
\end{equation}
\end{lemma}
\begin{proof}
We have 
\begin{displaymath}
h_{q} (a) = \bigg( \frac{1}{q} \, \frac{1 - a^2}{2 a} \bigg) + \bigg( - \frac{1 + a^2}{2 a} \bigg) =: u(a) + v(a) \, ,
\end{displaymath}
with $u$ and $v$ respectively decreasing and increasing on $[0, 1]$, and with $v \leq 0$, so that we have, for $q \in J$:
\begin{displaymath}
h_{q} (a) \leq u \Big(\frac{1}{2} \Big) = \frac{3}{4 q} \leq \frac{15}{16} \, \cdot
\end{displaymath}
Similarly:
\begin{displaymath}
h_{q} (a) \geq u \Big( \frac{2}{3} \Big) + v \Big( \frac{1}{2} \Big) = \frac{5}{12 q} - \frac{5}{4} \geq \frac{1}{3} - \frac{5}{4} = - \frac{11}{12} \, \cdot
\end{displaymath}
Next, $2 h'_{q} (a)  = \big(1 - \frac{1}{q} \big) \, \frac{1 \,}{a^2} - \big( 1 + \frac{1}{q} \big) \,$; hence $| h'_{q} (a) | \leq C$. 
So that, writing $x_q = x_{q} (a) = \arccos h_{q} (a)$, we get, with another constant $C>0$, 
\begin{displaymath}
| x'_{q} (a) | = \frac{ | h'_{q} (a) | }{\sqrt{1 - h_{q} (a)^2}} \leq C \, ,
\end{displaymath}
since $h_{q} (a)^2 \leq 1 - \delta$. Finally, $1/9 \leq (1 - a)^2 \leq 1 + 2 a \cos x_q + a^2 \leq 4$ and since
\begin{displaymath}
P'_{- a} (x_q) = \frac{2 a (1 - a^2) \sin x_q}{(1 + 2 a \cos x_q + a^2)^2} \, \raise 1 pt \hbox{,}
\end{displaymath}
we get the final estimates, ending the proof.
\end{proof}

Back to Proposition~\ref{sept}. 
\smallskip

We saw in \eqref{egalite a et I} that the value of $a_{m, n} := \widehat{T_{a}^{n}} (m)$ is given by the formula 
\begin{equation} \label{egalite a et I bis}
a_{m, n} = \frac{1}{\pi} \,\Re I_{m, n} \, . 
\end{equation}

We have the following estimate, whose proof is postponed
(recall that $q = m / n$, and $x_q = x_{q}(a)$). 

\begin{proposition} \label{coro stationary phase}
We have  
\begin{equation} \label{cruc} 
I_{m, n} = \sqrt{2 \pi} \, n^{- 1/2} \,\, \frac{\e^{i \, [nF_q (x_q) + \pi/4]}}{\sqrt{|P'_{ -a} (x_q)|}} + O \, (n^{- 3/5}) \, ,
\end{equation} 
where the $O$ only depends on $a$, and so is absolute as long as $(a, q)$ is admissible. 
\end{proposition}

Note that  $3/5 > 1/2$. We hence have:
\begin{equation}
a_{m,n} = \sqrt{ \frac{2}{\pi} } \, n^{- 1/2} \,\, \frac{\cos \, [ \pi / 4 + n F_{q} (x_q)]}{\sqrt{|P_{- a} ' (x_q)|}} + O \, (n^{- 3/5}) \, ,
\end{equation}
It will be convenient to introduce $\phi_q (a)$, by settting  
\begin{equation}\label{nota} 
F_{q} \big( x_{q} (a) \big) = \varphi_{q} (a) \, . 
\end{equation}
Then, since $1/2 + 3/5 = 11/10$, $\cos^{2}(\pi / 4 + x) = \frac{1 - \sin 2x}{2}$ and  $ |P'_{- a} (x_q) |\geq \delta$  by Lemma~\ref{un}:
\begin{displaymath}
a_{m, n}^{\, 2} = \frac{1}{\pi} \, n^{- 1} \, \frac{1 - \sin \, [2 n \, \varphi_{q} (a) ] }{|P_{- a}' (x_q)|} + O\, (n^{- 11 / 10}) \, ,
\end{displaymath}
implying, since $ |P'_{- a} (x_q) |\leq C$ by Lemma~\ref{un} (again for $(a, q)$ admissible), and changing $\delta$: 
\begin{equation}\label{basic} 
a_{m, n}^{\, 2} \geq \delta \, n^{- 1} (1 - \sin \, [2 n \, \varphi_{q} (a)] ) + O\, (n^{- 11 / 10}) \, .
\end{equation}

We will also need estimates on the derivatives of $\varphi_{q}(a)$.

 \begin{lemma}\label{deux}  
If $(a, q)$ is admissible, then $\varphi_q$ decreases on $I$ and moreover:
\medskip

$1)$ $ \quad | \varphi '_{q} (a) | \geq \delta$; 
\medskip

$2)$ \quad $| \varphi ''_{q} (a) | \leq C$. 
\end{lemma}
\begin{proof} 
Note, in passing, that, with $x = x_{q} (a) \in [0, \pi]$ (thanks to \eqref{fonction}):
\begin{displaymath}
\varphi_{q} (a) = \int_{0}^x [ P_{- a}  (t) - P_{- a} (x) ] \, dt \leq 0 \, ,
\end{displaymath}
since the integrand is negative. Next, if $f$ and $g$ are real ${\cal C}^1$-functions and 
\begin{displaymath}
\Phi (a) = \int_{0}^{f (a)} g (a, t) \, dt \, ,
\end{displaymath}
the chain rule gives 
\begin{displaymath}
\Phi ' (a) = f  ' (a) \, g \big(a, f (a) \big) + \int_{0}^{f (a)} \frac{\partial g}{\partial a} \, (a, t) \, dt \, .
\end{displaymath}
With $g (a, t) = P_{- a} (t)$ and $f (a) = x_{q} (a)$, we get, remembering that $x_{q} (a)$ is critical for $F_q$: 

\begin{align*}
\varphi '_{q} (a) &= \big[ P_{- a} \big( x_{q}  (a)  \big) - q \big] \, x_{q} ' (a) 
+ \int_{0}^{x_{q} (a)} \frac{\partial P_{- a}}{\partial a} \, (a, t) \, dt\\ 
&= \int_{0}^{x_{q} (a)} \frac{\partial P_{- a}}{\partial a} \, (a, t) \, dt \, . 
\end{align*}

But $P_{- a} (t) = 1 + 2 \sum_{k = 1}^\infty (- a)^k \cos k t$, so that

\begin{displaymath}
\varphi '_{q} (a) = \int_{0}^{x_{q} (a)} \bigg( - 2 \sum_{k = 1}^\infty  k \, (- a)^{k - 1} \cos k t \bigg) \, dt 
= \frac{2}{a} \, \sum_{k = 0}^\infty (- a)^k \sin \, [ k x_{q} (a) ] \, ,
\end{displaymath}
that is
\begin{equation} \label{signe} 
\varphi '_{q} (a) = \frac{2}{a} \, \Im \frac{1}{1 + a \, \e^{i  x_{q} (a)}} = \frac{- 2 \sin x_{q} (a)}{1 + 2 a \cos x_{q} (a) + a^2}< 0 \, .
\end{equation}

Now, \eqref{encx} gives $1)$.
\smallskip

Since $|x'_{q} (a) | \leq C$ by Lemma~\ref{un}, the chain rule and \eqref{signe} clearly give the uniform boundedness of 
$|\varphi ''_{q} (a)|$ when $(a, q)$ is admissible, and this ends the proof.
\end{proof}

Lemma~\ref{un} and Lemma~\ref{deux} will now be exploited through a simple variant of  the van der Corput inequalities. 

\begin{lemma} \label{vdc} 
Let $f \colon [A, B] \to \R$, with $A < B$, be a ${\cal C}^2$-function satisfying  $|f ' | \geq \delta$ and  $ |  f '' | \leq C $, and let us put
$M = \int_{A}^B \e^{i n  f (x)} \, dx$. Then
\begin{displaymath}
| M | \leq \frac{2}{n \delta} + \frac{C (B - A)}{n \, \delta^2} \, \cdot 
\end{displaymath}
\end{lemma}
\begin{proof} 
Write 
\begin{displaymath}
\e^{i n  f} = \frac{(\e^{i n f})'}{i n  f '}
\end{displaymath}
and integrate by parts to get
\begin{displaymath}
M = \bigg[ \frac{\e^{i n f}}{i nf '} \bigg]_{A}^{B} - \frac{i}{n} \int_{A}^B \e^{i n f (x)} \frac{f '' (x)}{[f ' (x)]^2} \, dx 
=: M_1 + M_2 \, , 
\end{displaymath}
with $| M_1 | \leq \frac{2}{n  \delta}$ and $| M_2 | \leq \frac{B - A}{n} \, \frac{C}{\delta^2}$. This gives the result.
\end{proof}
\goodbreak 

\emph{End of proof of Proposition~\ref{sept}}. The preceding lemma can be applied with $A = 1/2$, $B = 2/3$, $f = \varphi_q$ and $n$ changed into $2 n$, 
since Lemma~\ref{deux} shows that this $f$ meets the assumptions of Lemma~\ref{vdc}. This gives us, uniformly with respect to $(a,q)$ admissible:
\begin{equation} \label{fin} 
\bigg| \int_{I} \sin \, [2 n \varphi_{q} (a)] \, d a \bigg| \leq \bigg| \int_{I} \e^{2 i n \varphi_{q}(a)} \, d a \bigg| \leq \frac{C}{n} \, \cdot
\end{equation} 

Now, integrating \eqref{basic} on $I$ and using \eqref{fin} give for some numerical $\delta \in (0, 1/2)$  (recall that 
$a_{m, n} = \widehat{T_{a}^{n}} (m)$): 
\begin{displaymath}
\int_{I} | \widehat{T_{a}^{n}} (m)|^2 \, d a \geq \delta n^{- 1} + O\, (n^{- 2}) + O\, (n^{- 11 / 10}) \geq ( \delta/2) \, n^{- 1}
\end{displaymath}
for $n \geq n_0$ and $\alpha^{- 1} \leq m / n \leq \alpha$. This ends the proof of Proposition~\ref{sept}.
\end{proof}
%

%
\begin{proof}[Proof of Theorem~\ref{theo CN T_a beta SO}] 
We know, by Proposition~\ref{tous pour un}, that $C_{T_a}$ is bounded for all $a \in \D$, and that, thanks to Lemma~\ref{second lemma}, 
\begin{displaymath}
K := \sup_{1/ 2 \leq a \leq 2 / 3} \| C_{T_a} \| < + \infty \, . 
\end{displaymath}
Matricially, this writes, for all $a \in ( 1 / 2, 2 / 3)$:
\begin{displaymath}
\bigg\| \bigg( \hat{T_a^n} (m) \, \sqrt{ \frac{\beta_m}{\beta_n} } \bigg)_{m, n} \bigg\| \leq K \, . 
\end{displaymath}
In particular, for every $n \geq 1$, we have, considering the columns and rows of the previous matrix:
\begin{displaymath}
\sum_{m = 1}^\infty | \hat{T_a^n} (m) |^2 \, \frac{\beta_m}{\beta_n}\leq K^2 \, , \quad  \text{i. e.} \quad 
\sum_{m = 1}^\infty | \hat{T_a^n} (m) |^2 \, \beta_m \leq K^2 \, \beta_n \, , 
\end{displaymath}
and, for every $m \geq 1$:
\begin{displaymath}
\sum_{n = 1}^\infty | \hat{T_a^n} (m) |^2 \,  \frac{\beta_m}{\beta_n} \leq K^2 \, , \quad \text{i. e.} \quad 
\sum_{n = 1}^\infty | \hat{T_a^n} (m) |^2 \, \frac{1}{\beta_n} \leq \frac{K^2}{\beta_m} \, \cdot 
\end{displaymath}
In particular, for every $n \geq 1$:
\begin{equation} \label{eq: line}
\sum_{(4 / 5) n \leq j \leq (5 / 4) n} | \hat{T_a^n} (j) |^2 \, \beta_j \leq K^2 \, \beta_n \, \cdot 
\end{equation}
and, for every $m \geq 1$: 
\begin{equation} \label{eq: column}
\sum_{(4 / 5) m \leq k \leq (5 / 4) m} | \hat{T_a^k} (m) |^2 \, \frac{1}{\beta_k} \leq \frac{K^2}{\beta_m} \, \cdot 
\end{equation}
Integrating on $a \in (1 / 2, 2 / 3)$, and using Proposition~\ref{sept}, we get, from \eqref{eq: line}, for $n$ large enough,
\begin{equation} \label{first piece} 
\frac{\delta}{n} \sum_{(4 / 5) n \leq j \leq (5 / 4) n} \beta_j \leq \frac{K^2}{6 \,} \, \beta_n
\end{equation}
and, from \eqref{eq: column}, for $m$ large enough, we have both 

\begin{equation} \label{harmonic mean1}
\frac{\delta}{m} \sum_{(4 / 5) m \leq k\leq  m} \frac{1}{\beta_k} \leq \frac{5 K^2}{24} \, \frac{1}{\beta_m} \, \cdot
\end{equation}
and\begin{equation} \label{harmonic mean2}
\frac{\delta}{m} \sum_{ m \leq k \leq (5 / 4) m} \frac{1}{\beta_k} \leq\frac{5 K^2}{24}\, \frac{1}{\beta_m} \, \cdot
\end{equation}
Since the harmonic mean (over the sets of integers $[4m/5, m ]$ and $[m, 5 m/4 ]$, which have cardinality $\approx n\approx m$) is less than the arithmetical mean,  
we obtain from \eqref{harmonic mean1} and \eqref{harmonic mean2} both
\begin{equation} \label{second piece1}
\beta_m \leq \frac{125}{24\delta} \, \frac{K^2}{m} \sum_{(4 / 5)m \leq k \leq m} \beta_k \, . 
\end{equation}
and
\begin{equation} \label{second piece2}
\beta_m \leq \frac{10}{3\delta} \, \frac{K^2}{m} \sum_{m \leq k \leq (5 / 4) m} \beta_k \, . 
\end{equation}

Now assume that $n \leq m \leq (5 / 4) n$. From \eqref{second piece1}, we have 
$$\beta_m \lesssim \, \frac{1}{m} \sum_{(4 / 5)m \leq k \leq m} \beta_k \lesssim \frac{1}{n} \sum_{(4 / 5)n \leq k \leq (5/4)n} \beta_k \lesssim \beta_n  $$ 
thanks to \eqref{first piece}.\\
From \eqref{second piece2} and  \eqref{first piece}, we treat the case $(4/5)n \leq m \leq  n$ in the same way.\\
We conclude that for some constant $c>0$, we have for $n$ and $m$ large enough satisfying $(4 / 5) n \leq m \leq (5 / 4) n$:
\begin{equation}
\beta_m \leq c \, \beta_n \, , 
\end{equation}
which means that $\beta$ is slowly oscillating. 
\end{proof}
%

%
\begin{proof} [Proof of Proposition~\ref{coro stationary phase}] 
We will use a variant of \cite[Lemma~4.6, p.~72]{TIT} on the van der Corput's version of the stationary phase method. 
A careful reading of the proof in \cite[page~72]{TIT} gives the version below, which only needs local estimates on the second derivative $F''$, as 
occurs in our situation. For sake of completeness, we will give a proof, however postponed to the Appendix.
\goodbreak

\begin{proposition} [Stationary phase] \label{tihebr}  
Let $F$ be a real function with continuous derivatives up to the third order on the interval $[ A, B]$ and $F'' > 0$ throughout $]A, B[\,$.  Assume that there is a 
(unique) point $c$ in $]A, B[$ such that $F' (c) = 0$, and that, for some positive numbers $\lambda_2$, $\lambda_3$, and $\eta$, the following assertions hold: 
\begin{enumerate}
\setlength\itemsep {-0.1 em}

\item [$1)$] $[c - \eta, c + \eta] \subseteq [A,B]$;

\item [$2)$] $F '' (x) \geq \lambda_2$ for all $x \in [c - \eta, c + \eta]$;

\item [$3)$]  $|F ''' (x)| \leq  \lambda_3$  for all $x \in [A, B]$.
 \end{enumerate}
Then:
\begin{equation} \label{ozouf} 
\int_A^B \e^{i F(x)} \, dx = \sqrt{2 \pi} \, \, \frac{\, \e^{i [F (c) + \pi/4]} \, }{F '' (c)^{1/2}} + O \, \bigg( \frac{1}{\eta \lambda_2} 
+ \eta^4 \lambda_3 \bigg) \, ,
\end{equation}
where the $O$ involves an absolute constant. 
\end{proposition}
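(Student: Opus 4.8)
The plan is to prove Proposition~\ref{tihebr} by the classical method of stationary phase: split $[A,B]$ (using $1)$ so the window lies inside) into the central window $[c-\eta,c+\eta]$ and the two tails $[A,c-\eta]$, $[c+\eta,B]$, estimate the tails by a non‑stationary (integration by parts) bound, and on the central window compare $\e^{iF}$ first with $\e^{iG}$, where $G$ is the quadratic Taylor polynomial of $F$ at $c$, and then with the Fresnel integral. Since replacing $F$ by $F-F(c)$ multiplies everything by $\e^{iF(c)}$, I may assume $F(c)=0$.

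\emph{Tails.} Because $F''>0$ on $]A,B[$, $F'$ is strictly increasing with $F'(c)=0$, so $F'>0$ on $(c,B]$ and $F'<0$ on $[A,c)$; using the \emph{local} lower bound $2)$, $F'(x)\ge F'(c+\eta)=\int_c^{c+\eta}F''(t)\,dt\ge\eta\lambda_2$ for $x\in[c+\eta,B]$, and symmetrically $-F'(x)\ge\eta\lambda_2$ on $[A,c-\eta]$ (when these intervals are nonempty). Writing $\e^{iF}=(\e^{iF})'/(iF')$ and integrating by parts on $[c+\eta,B]$, the boundary terms are $O(1/(\eta\lambda_2))$ and the remainder integral is at most $\int_{c+\eta}^{B}F''(x)/(F'(x))^2\,dx=1/F'(c+\eta)-1/F'(B)\le1/(\eta\lambda_2)$, since $F''\ge0$ there; likewise on the left tail. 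Hence the tails contribute $O(1/(\eta\lambda_2))$.

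\emph{Central window.} With $G(x)=\tfrac12F''(c)(x-c)^2$ (recall $F(c)=F'(c)=0$), Taylor's formula and hypothesis $3)$ give $|F(x)-G(x)|\le\tfrac{\lambda_3}{6}|x-c|^3$ on $[c-\eta,c+\eta]$, whence $|\e^{iF}-\e^{iG}|\le|F-G|$ and $\int_{c-\eta}^{c+\eta}(\e^{iF}-\e^{iG})\,dx=O(\lambda_3\eta^4)$. In $\int_{c-\eta}^{c+\eta}\e^{i\frac12F''(c)(x-c)^2}\,dx$, substituting $w=\sqrt{F''(c)}\,(x-c)$ gives $F''(c)^{-1/2}\int_{-T}^{T}\e^{iw^2/2}\,dw$ with $T=\eta\sqrt{F''(c)}$; using $\int_{-\infty}^{\infty}\e^{iw^2/2}\,dw=\sqrt{2\pi}\,\e^{i\pi/4}$ and $\big|\int_{T}^{\infty}\e^{iw^2/2}\,dw\big|\le2/T$ (one integration by parts), together with $F''(c)\ge\lambda_2$ from $2)$ at $x=c$, this equals $\sqrt{2\pi}\,F''(c)^{-1/2}\e^{i\pi/4}+O\big(1/(\eta F''(c))\big)=\sqrt{2\pi}\,F''(c)^{-1/2}\e^{i\pi/4}+O(1/(\eta\lambda_2))$. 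Adding the tails and undoing the normalization yields \eqref{ozouf}.

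There is no serious obstacle here: the argument is the textbook stationary‑phase computation. The only points needing care — and the reason for isolating this particular variant — are that every lower bound on $|F'|$ used for the tails is extracted from the \emph{global} convexity $F''>0$ propagating the \emph{local} bound $F''\ge\lambda_2$, that the quadratic‑comparison step is precisely what produces the $\eta^4\lambda_3$ term, and that the prefactor $F''(c)^{-1/2}$ is what improves the Fresnel‑tail error from $1/(\eta\sqrt{\lambda_2})$ to $1/(\eta\lambda_2)$; with the $O$'s carrying absolute constants throughout, this gives exactly the stated $O\big(1/(\eta\lambda_2)+\eta^4\lambda_3\big)$.
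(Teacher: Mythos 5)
Your proof is correct and follows essentially the same route as the paper's: the same three-piece decomposition, a first-derivative (non-stationary phase) bound on the tails using $F'(c\pm\eta)\gtrless\pm\eta\lambda_2$ obtained by integrating $F''\ge\lambda_2$ over the window, a cubic Taylor-remainder comparison producing the $\eta^4\lambda_3$ term, and the truncated Fresnel integral with tail $O(1/(\eta\lambda_2))$. The only differences are cosmetic: you carry out the integration by parts for the tails inline instead of invoking the paper's Lemma on the first-derivative test, and you use the substitution $w=\sqrt{F''(c)}\,(x-c)$ rather than the paper's $x=\sqrt{2/F''(c)}\,\sqrt{t}$.
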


We will show that Proposition~\ref{tihebr} is applicable with $F=nF_q$ and
\begin{displaymath} 
[A, B] = [0, \pi] \, , \quad  c = x_q \, , \quad \lambda_2 = \kappa_0 \, n \, , \quad  \lambda_3 = C_0 n \, , \quad  
\eta = (\lambda_{2} \lambda_{3})^{- 1/5} \, .
\end{displaymath} 
The parameter $\eta$ is chosen in order to make both error terms in Proposition~\ref{tihebr} equal:  $ \frac{1}{\eta \lambda_2} = \eta^4 \lambda_3$; so:
\begin{displaymath} 
\eta = \kappa \, n^{- 2/5}   
\end{displaymath} 
and 
\begin{equation} \label{big O}
\frac{1}{\eta \lambda_2} + \eta^4 \lambda_3 = \tilde \kappa \, n^{- 3/5} = O\, (n^{- 3/5}) 
\end{equation} 
(with $\kappa = (\kappa_0 C_0)^{- 1 / 5}$ and $\tilde \kappa = 2 / \kappa_0 \kappa$). 
\smallskip

The slight technical difficulty encountered here is that $F_q ''(x)$ vanishes at $0$ and $\pi$. But Proposition~\ref{tihebr} covers this case. We have 
\begin{displaymath} 
F''(x)=nF_q ''(x) = n P'_{- a} (x) = 2 \, a (1 - a^2) \, \frac{\sin x}{(1 + 2 a \cos x + a^2)^2} \, n \, ,
\end{displaymath} 
and there are some positive (and absolute) constants $\kappa_0$ and $\sigma$ such that 
\begin{equation} \label{rectif} 
F''(x) \geq \kappa_0 \, n = \lambda_2  \quad \text{for } x \in [\sigma , \pi - \sigma] \, .
\end{equation}

Now (for $n$ large enough), $[x_q - \eta, x_q + \eta] \subseteq  [\sigma, \pi - \sigma]$. Hence the assumptions $1)$ and $2)$ of 
Proposition~\ref{tihebr} are satisfied. 
\smallskip

Finally, since $F(x)=nF_q (x) = n [V_a (x) - q x]$, $F'''=nF_q ''' =nV_a '''=nP_{-a} ''$, we have, for all $x \in [0, \pi]$ and $(a,q)$ admissible:
\begin{displaymath} 
|F ''' (x) | \leq  C_0 \, n = \lambda_3 \, ,
\end{displaymath} 
where $C_0$ is absolute
and assertion $3)$ of Proposition~\ref{tihebr} holds. 
\smallskip

With \eqref{big O} this ends the proof of \eqref{cruc}, once we remarked that $n V_a '' (x_q) = F '' (x_q)$. 
\end{proof}

\goodbreak 
\section {Boundedness of all composition operators} \label{all symbols}

In this section, we characterize all the sequences $\beta$ for which all composition operators are bounded on $H^2 (\beta)$. The main remaining step is 
the following theorem.

\begin{theorem} \label{example without ED} 
\hskip - 2,2 pt Assume that all composition operators $C_\phi$ are bounded on $H^2 (\beta)$. Then $\beta$ is essentially 
decreasing.
\end{theorem}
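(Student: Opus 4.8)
The plan is to argue by contraposition: assuming $\beta$ is \emph{not} essentially decreasing, I will construct a single symbol $\phi$ for which $C_\phi$ is unbounded on $H^2(\beta)$. Since we already know (Theorem~\ref{theo CNS for T_a}) that boundedness of all $C_{T_a}$ forces $\beta$ to be slowly oscillating, and slowly oscillating sequences have polynomial growth and decay (Proposition~\ref{simple proposition polynomial minoration}), I may and will work under the standing assumption that $\beta$ is slowly oscillating with $c\le\beta_m/\beta_n\le C$ for $n/2\le m\le 2n$; what fails is only that $\beta$ is \emph{not} bounded above by a constant multiple of earlier terms. Concretely, failure of the essentially decreasing property produces indices $n_k<m_k$ with $\beta_{m_k}/\beta_{n_k}\to\infty$, and by the slow oscillation one can in fact pass to a "staircase" subsequence along which $\beta$ climbs by a definite ratio over each dyadic block, so that $\beta$ has arbitrarily long stretches of genuine growth.

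The idea for the symbol is to combine a rotation-free inner-type building block that moves mass from a low frequency $n_k$ up to a high frequency $m_k$, exactly as in the proofs of Proposition~\ref{phi(0)=0} and Proposition~\ref{simpleBoundVan}, but now assembled into one fixed symbol. First I would recall the two obstructions already isolated in Section~\ref{sec: phi (0) = 0}: the symbol $z\mapsto z^2$ is unbounded as soon as $\beta_{4k+2}/\beta_{2k+1}\to\infty$, and more generally $\phi_{m,n}(z)=z\big((1+z^{m-n})/2\big)^{1/n}$ satisfies $[\phi_{m,n}(z)]^n=(z^n+z^m)/2$, which on the single monomial $e_n$ has image of norm$^2$ equal to $\tfrac14(\beta_n+\beta_m)$. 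The point is to glue countably many such blocks at rapidly increasing scales into one analytic self-map of $\D$; a Blaschke-product-style or lacunary construction $\phi(z)=\sum_k c_k z^{N_k}(\cdots)$ with the supports of the relevant iterates essentially disjoint lets one test $C_\phi$ against normalized monomials $e_{n_k}^\beta$ (or finite sums thereof) and read off $\|C_\phi e_{n_k}^\beta\|^2 \gtrsim \beta_{m_k}/\beta_{n_k}\to\infty$, contradicting boundedness.

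The main obstacle, and where the real work lies, is producing a genuine analytic self-map $\phi\colon\D\to\D$ — not merely a formal matrix — whose powers $\phi^n$ have Taylor coefficients concentrated where we need them, while keeping $\|\phi\|_\infty\le 1$. The naive infinite product of the finite blocks $\phi_{m_k,n_k}$ need not map into $\D$, and fractional powers like $((1+z^{m-n})/2)^{1/n}$ only barely stay in $\D$, so the gluing must be done with care: I would use well-separated scales $m_k-n_k\gg m_{k-1}$, damping coefficients $c_k\downarrow 0$ chosen so that $\sum|c_k|\le 1$ guarantees $\phi(\D)\subseteq\D$, and then estimate $\widehat{\phi^{n_k}}(m_k)$ from below by isolating the dominant product term and bounding the cross terms using lacunarity (a Cauchy-product / stationary-separation argument). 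One then needs the surviving main term to carry weight comparable to $\beta_{n_k}$-normalized mass at frequency $m_k$; this is exactly the kind of estimate the remark after Lemma~\ref{lemma LU} flags (control of $\sum_{m} m^{\pm\alpha}|a_{m,n}|^2$), used here in reverse to force divergence.

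An alternative route, which may be cleaner, is to \emph{not} build $\phi$ by hand but to invoke the dichotomy already available: by Theorem~\ref{theo CNS for T_a} the hypothesis already gives slow oscillation, so it remains only to rule out that $\beta$ slowly oscillates \emph{and} grows without the essential-decrease bound. Here I would use a linear-fractional or small-perturbation symbol $\phi_t(z)=z+t(1-z)^\gamma$ of Kriete--MacCluer type, or a finite Blaschke product of low degree, whose composition operator's matrix entries are explicitly computable, and test against the reproducing kernels $K_w$ or against monomials near the indices $m_k$ where $\beta$ peaks; the slow-oscillation-controlled decay of $\widehat{\phi^n}(m)$ away from the diagonal together with a polynomial lower bound $\beta_m\gtrsim m^{-\alpha}$ (Proposition~\ref{polynomial lb necessary}, via Proposition~\ref{reproducing kernel polynomial decay}) localizes the action of $C_\phi$ to a diagonal band, after which any block over which $\beta$ increases by an unbounded factor directly violates $\|C_\phi e^\beta_{n}\|\le\|C_\phi\|\,\|e^\beta_n\|$. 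I expect the bookkeeping of which frequencies the fixed symbol $\phi$ actually transports — and verifying that transport is efficient enough at the bad scales — to be the delicate point, but slow oscillation is precisely what makes the band structure rigid enough for the argument to close.
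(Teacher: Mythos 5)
Your overall strategy coincides with the paper's: reduce to the case where $\beta$ is slowly oscillating (via Theorem~\ref{theo CN T_a beta SO}) but not essentially decreasing, pick scales $m_k<n_k$ with $\beta_{n_k}/\beta_{m_k}\to\infty$, and build a single symbol whose powers transport a definite amount of Taylor mass from frequency $m_k$ to a window around $n_k$, for every $k$. However, the construction of that symbol \emph{is} the proof, and neither of your two proposed routes closes the gap. The ``alternative route'' of your last paragraph cannot work at all: once $\beta$ is slowly oscillating, Theorem~\ref{theo CS T_a beta SO} shows that \emph{every} symbol extending analytically to a neighborhood of $\overline{\D}$ --- in particular every linear-fractional map and every finite Blaschke product --- induces a bounded composition operator, so no such symbol can witness unboundedness; the witness must be genuinely infinite.

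Your main route, a damped lacunary sum $\phi=\sum_k c_k z^{N_k}(\cdots)$ with $\sum_k|c_k|\le1$, has two unresolved defects. First, the powers $\phi^{m_k}$ of a sum produce multinomial cross-terms whose control you defer entirely. Second, and more fundamentally, such a $\phi$ satisfies $|\phi|<1$ almost everywhere on $\T$ (indeed $\|\phi\|_\infty<1$ if $\sum_k|c_k|<1$), hence $\|\phi^{m}\|_{L^2(\T)}^2=\frac{1}{2\pi}\int_{-\pi}^{\pi}|\phi(\e^{it})|^{2m}\,dt\to0$ as $m\to\infty$, so the required uniform-in-$k$ lower bound $\sum_{n_k\le j\le 2n_k}|\hat{\phi^{m_k}}(j)|^2\ge c_1>0$ is out of reach. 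The paper resolves exactly these two points by taking $\phi=\prod_{k}T_{a_k}(z^{n_k})$, an \emph{infinite Blaschke product} with $a_k=1-1/m_k$: innerness gives $\|\phi^m\|_{L^2(\T)}=1$ for all $m$; the multiplicative structure makes $\phi^{m_k}$ a product of powers of the individual factors; and the calibration $a_k=1-1/m_k$ guarantees that both $a_k^{m_k}$ and the coefficient $(1-a_k^2)\,m_k\,a_k^{m_k-1}$ of $z^{n_k}$ in $\big(T_{a_k}(z^{n_k})\big)^{m_k}$ stay bounded below by $\e^{-1}$. The remaining (and essential) work is to show that the other factors do not smear this mass away: the head $\phi_1\cdots\phi_{k-1}$ is handled by a uniform tail estimate for powers of a fixed function analytic across $\overline{\D}$ (Lemma~\ref{crucial lemma}), whose effectiveness again relies on innerness to keep the low-frequency part of $(\phi_1\cdots\phi_{k-1})^{m_k}$ of $L^2$-norm close to $1$, while the tail $\prod_{l>k}\phi_l$ is constant up to order $z^{n_{k+1}}$ with $m_k$-th power bounded below. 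None of these quantitative steps appears in your sketch, so as it stands the argument does not go through.
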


As an immediate consequence, we obtain Theorem~\ref{full characterization}.

\begin{proof} [Proof of Theorem~\ref{full characterization}] 
Assume that $\beta$ is essentially decreasing and slowly oscillating. All composition operators 
$C_\psi$ with $\psi (0) = 0$ are bounded on $H^2(\beta)$ (see the Introduction or Proposition~\ref{phi(0)=0}). Since $\beta$ is slowly oscillating, all the 
composition operators $C_{T_a}$, with $a \in \D$, are bounded thanks to Theorem~\ref{theo CS T_a beta SO}. Now it is very classical that we can get the 
boundedness of every composition operators. Indeed given a symbol $\varphi$, the symbol $\psi=T_a\circ\varphi$ fixes the origin for $a=-\varphi(0)$. 
Since $C_\varphi = C_\psi \circ C_{T_{-a}}$, the conclusion follows. 
\smallskip

Assume that all composition operators are bounded on $H^2 (\beta)$, in particular, the $C_{T_a}$ ones are so, and $\beta$ is slowly oscillating, thanks to 
Theorem~\ref{theo CN T_a beta SO}. It also follows from Theorem~\ref{example without ED} that $\beta$ is essentially decreasing. 
\end{proof}

We will use the following elementary, but crucial, lemma.

\begin{lemma} \label{crucial lemma}
Let $u$ be a function analytic in an open neighborhood $\Omega$ of $\overline{\D}$. Then, for every $\eps > 0$, there exists an integer $N \geq 1$ such that 
\begin{equation}
\sum_{j = N p}^\infty |\hat{u^p} (j) |^2 \leq \eps \, , \quad \forall p \geq 1 \, .  
\end{equation}
\end{lemma}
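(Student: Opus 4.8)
\textbf{Proof plan for Lemma~\ref{crucial lemma}.}

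The plan is to exploit that, since $u$ extends analytically to an open neighborhood $\Omega$ of $\overline{\D}$, we have $\|u\|_\infty := \sup_{|z| = 1}|u(z)| < 1$ in the case $u(\overline{\D}) \subseteq \D$ — but more robustly, even if $u$ only maps $\D$ into $\D$, we have a uniform bound $\|u^p\|_\infty \le 1$ on $\D$, and, crucially, on a slightly larger disk $D(0,R)$ with $R > 1$ we control the growth. First I would fix $R \in (1, \infty)$ with $\overline{D(0,R)} \subseteq \Omega$ and set $M = M(R) = \sup_{|z| = R}|u(z)| < \infty$; also fix $r \in (0,1)$, say $r = 1/2$, with $M_0 := \sup_{|z|=r}|u(z)| < 1$ (if $u$ is nonconstant; the constant case being trivial). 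Then Cauchy's inequalities give two estimates on the Taylor coefficients of $u^p$: from the circle $|z| = r$ we get $|\hat{u^p}(j)| \le M_0^p / r^j$, and from $|z| = R$ we get $|\hat{u^p}(j)| \le M^p / R^j$.

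Next I would split the tail $\sum_{j \ge Np}|\hat{u^p}(j)|^2$ at some threshold proportional to $p$. For the range $Np \le j \le \lambda p$ (with $\lambda > N$ a parameter to be chosen, coming from Lemma~\ref{majo coeff}'s type of estimate), I would use that $u$ maps $\overline{\D}$ into $\D$ so $M(1) < 1$, giving a bound like $|\hat{u^p}(j)| \le M(1)^p/1^j$, hence exponential decay in $p$ uniformly; summing over the $O(p)$ many such $j$ still gives something $\le \eps/2$ for $N$ large (in fact for all $p$, since $p\,M(1)^{2p} \to 0$). Actually the cleaner route: for $j \ge Np$, use the Cauchy estimate on $|z| = r < 1$: since $|\hat{u^p}(j)| \le M_0^p/r^j$ and on this range $j \ge Np$, we have $|\hat{u^p}(j)|^2 \le M_0^{2p} r^{-2j}$, but $r^{-2j}$ grows, so this only helps when $j$ is not too large. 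The genuinely correct splitting is: for $Np \le j \le R' p$ use the bound coming from $|z|=1$ (i.e. $\|u^p\|_{H^2} \le \|u\|_\infty^p \le 1$, but we need smallness) — here one uses that since $u$ extends past $\overline{\D}$, actually $\|u\|_{\infty, \overline{\D}} < 1$ when $u(\overline\D)\subseteq \D$, which one may assume after shrinking $\Omega$ — combined with the coefficient count; and for $j \ge R' p$ large use the $|z| = R$ estimate $|\hat{u^p}(j)|^2 \le M^{2p} R^{-2j}$, whose sum over $j \ge R'p$ is a convergent geometric series equal to $M^{2p} R^{-2R'p}/(1 - R^{-2}) = c\,(M/R^{R'})^{2p}$, which is $\le \eps/2$ once $R' > \log M/\log R$ so that $M/R^{R'} < 1$, uniformly in $p \ge 1$, and then $N$ is taken $\ge$ something ensuring the middle range is also controlled.

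The main obstacle I anticipate is organizing the split cleanly so that the bound is genuinely \emph{uniform in $p$} and \emph{for all $p \ge 1$}, not just large $p$: the exponential gains in $p$ must dominate the polynomial (in $p$) number of terms in the intermediate window $Np \le j \lesssim p$, and one must be careful that the constant $N$ is chosen \emph{after} $\eps$ but works simultaneously for every $p$. A convenient way to avoid the awkward middle regime entirely is to note that the hypothesis actually lets us assume, after replacing $\Omega$ by a smaller neighborhood of $\overline{\D}$ and $u$ by its restriction, that $u$ maps $\overline{\D}$ into $\D$ so $M(1) =: \theta < 1$; then Cauchy on $|z| = 1$ gives $|\hat{u^p}(j)| \le \theta^p$ for \emph{all} $j$, hence for any fixed $N_1$, $\sum_{N_1 p \le j \le N_2 p}|\hat{u^p}(j)|^2 \le N_2 p\,\theta^{2p}$, while the far tail $j \ge N_2 p$ is handled by the $R$-estimate as above; choosing $N_2$ first to kill the far tail uniformly, then $N = N_1$ arbitrary (the middle-plus-far sum being $\le N_2 p \theta^{2p} + c(M/R^{N_2})^{2p}$, which is bounded by $\eps$ for all $p\ge1$ since both terms are bounded sequences in $p$ tending to $0$ — take $N$ large enough that even $p=1,2,\dots$ work, using that $\sup_{p\ge1} p\theta^{2p} < \infty$ and can be made $<\eps$ by... ). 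Hmm — $p\theta^{2p}$ at $p=1$ is $\theta^2$ which need not be small. So the resolution must be: the sum over $j \ge Np$ with $N$ \emph{large} is small because for large $N$ the range $[Np, \infty)$ lies where coefficients are tiny; precisely, use $|\hat{u^p}(j)| \le M_0^p/r^j$ only won't do. The honest fix: $\sum_{j\ge Np}|\hat{u^p}(j)|^2 \le \sup_{j\ge Np}|\hat{u^p}(j)| \cdot \sum_j |\hat{u^p}(j)| $ — no. I will instead follow the two-radius Cauchy estimate: $|\hat{u^p}(j)|^2 \le (M_0^p r^{-j})(M^p R^{-j}) = (M_0 M)^p (rR)^{-j}$ \emph{only when both apply}, i.e. always (both are valid bounds, take the geometric mean); then $\sum_{j \ge Np} (M_0 M)^p (rR)^{-j} = (M_0M)^p (rR)^{-Np}/(1 - (rR)^{-1})$ \emph{provided} $rR > 1$, which we arrange by choosing $r$ close enough to $1$ (possible since $M_0 = M(r) \to M(1) < 1$ as $r \to 1^-$, staying $<1$); the whole thing is $\le c\,\big((M_0M)/(rR)^N\big)^p$, and for $N$ large enough that $(M_0 M) < (rR)^N$ this is $\le c\, \theta_N^p \le c\,\theta_N \le \eps$ for all $p \ge 1$, where $\theta_N \to 0$ as $N \to \infty$. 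That closes it.
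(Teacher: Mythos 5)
Your final argument (take the geometric mean of the two Cauchy estimates $|\hat{u^p}(j)|\le M_0^p r^{-j}$ and $|\hat{u^p}(j)|\le M^p R^{-j}$ with $rR>1$, then sum the geometric series over $j\ge Np$) is correct and is essentially the paper's proof, which runs the single outer-circle Cauchy estimate through Lemma~\ref{majo coeff} to get $|\hat{u^p}(j)|\le \e^{-bj}$ for $j\ge\lambda p$ and sums the resulting geometric series; your inner-circle factor is harmless but superfluous, since $M^{2p}R^{-2j}$ alone already yields $\sum_{j\ge Np}|\hat{u^p}(j)|^2\le (1-R^{-2})^{-1}(MR^{-N})^{2p}\le\eps$ for $N$ large, uniformly in $p\ge1$. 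One caveat: the intermediate detours invoking $\sup_{|z|=1}|u|<1$ or $u(\overline{\D})\subseteq\D$ are unwarranted — the lemma assumes neither, and in the paper's application $u$ is a finite Blaschke product with $|u|\equiv1$ on $\T$ — but your closing computation never actually uses them (only $rR>1$ and $M_0M<\infty$ are needed), so the proof stands.
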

\begin{proof}
From Lemma~\ref{majo coeff}, we know that there exist some integer $\lambda>1$ and a constant $b > 0$, such that $|\hat{u^p} (j) | \leq
\e^{- b j}$ when $j \geq\lambda p$. Therefore, for any $N\geq\lambda$, we have
\begin{displaymath}
\sum_{j = Np}^\infty | \hat{u^p} (j) |^2 \leq\big(1-\e^{-2b}\big)^{-1}\e^{-2bNp}\leq\big(1-\e^{-2b}\big)^{-1}\e^{-2bN}\leq\eps \, , 
\end{displaymath}
as soon as $N$ is chosen large enough.
\end{proof}
\begin{proof} [Proof of Theorem~\ref{example without ED}] 
Thanks to Theorem~\ref{theo CN T_a beta SO}, we know that $\beta$ is slowly oscillating.
Therefore we shall assume that the sequence $\beta$ is  slowly oscillating but is  not essentially decreasing. 
We are going to construct an analytic function $\phi \colon \D \to \D$ such that the composition operator $C_\phi$ is not bounded on $H^2 (\beta)$. 
This function $\phi$ will be a Blaschke product, of the form 
\begin{displaymath}
\phi (z) = \prod_{k = 1}^\infty T_{a_k} (z^{n_k}) = \prod_{k = 1}^\infty \frac{\! z^{n_k} + a_k}{\, 1 + a_k z^{n_k}} \, \raise 1 pt \hbox{,}
\end{displaymath}
for a sequence of numbers $a_k \in (0, 1)$ such that $\sum_{k\geq 1}(1-a_k)<\infty$ and a sequence of positive integers $n_k$ increasing to infinity. 
We recall that $T_a (z) = \frac{z + a}{1 + a z}$ for $0 < a < 1$. Then, $$|T_{a_k}(z^{n_k})-1|\leq \frac{2(1-a_k)}{1-|z|}$$ and $\varphi$ is a locally uniformly 
convergent infinite product. Observe that $\phi$ is indeed a convergent Blaschke product, with $n_k$ zeroes of modulus $a_{k}^{1/n_k},\  k=1,2,\ldots, $ and, 
setting $a_k=\e^{-\eps_k}$,  $\sum_{k}n_k(1-a_{k}^{1/n_k})\leq \sum_{k} n_k (\eps_k/n_k)=\sum_{k} \eps_k<\infty$.
\smallskip

These sequences  will be constructed by induction, together with another sequence of integers $(m_k)_{k \geq 1}$. 
\smallskip

Since $\beta$ is not essentially decreasing, there exist integers $n_1 > m_1\geq 4$ such that $\beta_{n_1} \geq 2 \, \beta_{m_1}$. 
We start with
\begin{displaymath}
a_1 = 1 - \frac{1}{m_1} \geq \frac{3}{4} \, \cdot
\end{displaymath}

Using Lemma~\ref{crucial lemma} with $u = T_{a_1}$, we get $N_0 \geq 1$ such that 
\begin{displaymath}
\qquad \sum_{j = N_0 m}^\infty | \hat{T_{a_1}^m} (j) |^2 \leq 2^{- 15} \, , \quad \forall m \geq 1 \, .
\end{displaymath}

Assume now that we have constructed increasing sequences of integers 
\begin{displaymath}
m_1, m_2, \ldots, m_k, \quad n_1, n_2, \ldots, n_k, \quad N_0, N_1,  \ldots, N_{k - 1} 
\end{displaymath}
such that, for $1 \leq l \leq k - 1$, we have 
\begin{displaymath}
m_{l + 1} \geq 4 m_l \quad \text{and} \quad n_{l + 1} \geq 4 n_l 
\end{displaymath}
and, for $1 \leq l \leq k$:
\begin{displaymath}
n_l \geq N_{l - 1} m_l \quad \text{and} \quad \beta_{n_l} \geq 2^{l} \beta_{m_l}
\end{displaymath}
and 
\begin{displaymath}
\sum_{j = N_{l - 1} m_l}^\infty | \hat{\phi_l^m} (j) |^2 \leq 2^{- 15} \, ,
\end{displaymath}
where 
\begin{displaymath}
a_l = 1 - \frac{1}{m_l} \quad \text{and} \quad \phi_l (z) = T_{a_l} (z^{n_l}) \, . 
\end{displaymath}

We then apply Lemma~\ref{crucial lemma} again, to the function $u = u_k = \phi_1 \cdots \phi_k$. We get $N_k > N_{k - 1}$ such that
\begin{equation} \label{two stars}
\quad \sum_{j = N_k m}^\infty | \hat{u_k^m} (j) |^2 \leq 2^{- 15} \, , \quad \forall m \geq 1 \, .
\end{equation}

Since $\beta$ is not essentially decreasing but is slowly oscillating, there exist $m_{k + 1} \geq 4 m_k$ and $n_{k + 1} \geq 4 n_k$ such that
\begin{displaymath}
n_{k + 1} \geq N_k m_{k + 1} \quad \text{and} \quad \beta_{n_{k + 1}} \geq2^{k+1} \beta_{m_{k + 1}} \, . 
\end{displaymath}
We set 
\begin{displaymath}
a_{k + 1} = 1 - \frac{1}{m_{k + 1}} \quad \text{and} \quad \phi_{k + 1} (z) = T_{a_{k + 1}} (z^{n_{k + 1}}) \, . 
\end{displaymath}
That ends the induction. 
\smallskip

Now, since $\sum_{k = 1}^\infty (1 - a_k) = \sum_{k = 1}^\infty \frac{1}{m_k} \leq \sum_{k = 1}^\infty 4^{- k} = 1 / 3 < + \infty$, the 
infinite product $\phi = \prod_{k \geq 1} \phi_k$ converges uniformly on compact subsets of $\D$. 
\smallskip

To show that the composition operator $C_\phi$ is not bounded on $H^2 (\beta)$, it suffices to show that, for some constant $c_1 > 0$, we have, 
for all $k \geq 2$:
\begin{equation} \label{mino of the sums}
\sum_{j = n_k}^{2 n_k} | \hat{\phi^{m_k}} (j) |^2 \geq c_1 \, . 
\end{equation}
Indeed, since $\beta$ is slowly oscillating, there is a positive constant $\delta < 1$ such that 
\begin{displaymath}
\qquad \beta_j \geq \delta \, \beta_{n_k} \quad \text{for} \quad j = n_k, n_k + 1, \ldots, 2 n_k \, .
\end{displaymath}
Then, if we set $e_k (z) = z^{m_k}$, we have, since $C_\phi (e_k) = \phi^{m_k}$:
\begin{displaymath}
\frac{ \| C_\phi (e_k) \|_{H^2 (\beta)}^2}{\| e_k \|_{H^2 (\beta)}^2} \geq 
\frac{\sum_{j = n_k}^{2 n_k} |\hat{\phi^{m_k}} (j) |^2 \beta_j}{\beta_{m_k}} 
\geq \frac{c_1 \delta \, \beta_{n_k}}{\beta_{m_k}} \geq2^k c_1 \delta \converge_{k \to \infty} + \infty \, , 
\end{displaymath}
so that $C_\phi$ is not bounded on $H^2 (\beta)$. 
\medskip

We now have to show \eqref{mino of the sums}. Let us agree to write formally, for an analytic function $f(z)=\sum_{k=0}^\infty f_k z^k$ and an arbitrary 
positive integer  $p$: 
$$ f (z) = \sum_{k = 0}^p f_k z^k + O (z^{p + 1}).$$ 
For that, we set 
\begin{displaymath}
G_k (z) = \prod_{l = k + 1}^\infty \phi_l (z) = \prod_{l = k + 1}^\infty a_l + O\, (z^{n_{k + 1}}) \, . 
\end{displaymath}
We have, for $k \geq 2$:
\begin{displaymath}
\phi (z) = v_k (z) \, \phi_k (z) \, G_k (z) \, ,
\end{displaymath}
where $v_k = \phi_1 \cdots \phi_{k - 1}$.

Remark now that, for $0 < a < 1$, we have
\begin{displaymath}
T_a (z) = a + (1 - a^2) z + O\, (z^2) \, , 
\end{displaymath}
so
\begin{displaymath} 
\phi_k (z) = T_{a_k} (z^{n_k}) = a_k + (1 - a_k^2) z^{n_k} + O\, (z^{2 n_k}) \, . 
\end{displaymath}
Then
\begin{equation} \label{Taylor G} 
[G_k (z)]^{m_k} = \bigg( \prod_{l = k + 1}^\infty a_l \bigg)^{m_k} + O\, (z^{n_{k + 1}})
\end{equation}
and 
\begin{equation} \label{Taylor phi}
[\phi_k (z)]^{m_k} = a_k^{m_k} + (1 - a_k^2) m_k a_k^{m_k - 1} z^{n_k} + O\, (z^{2 n_k}) \, . 
\end{equation}
But
\begin{equation} \label{mino one} 
a_k^{m_k-1} = \Big( 1 - \frac{1}{m_k} \Big)^{m_k-1} \geq \e^{- 1} := c_2
\end{equation}
and
\begin{equation} \label{mino two} 
(1 - a_k^2) m_k a_k^{m_k - 1} \geq (1 - a_k) m_k a_k^{m_k - 1} \geq c_2 \, .
\end{equation}
\smallskip
Moreover, since $1 - x \geq \e^{- 2 x}$ for $0 \leq x \leq 1 / 2$, we have: 
\begin{align} \label{mino three} 
\bigg( \prod_{l = k + 1}^\infty a_l \bigg)^{m_k}
& \geq \exp \, \bigg( - 2 \Big( \sum_{l = k + 1}^\infty \frac{1}{m_l} \Big) m_k \bigg)  \\ 
& \geq \exp \, \bigg( - 2 \sum_{l = 1}^\infty 4^{- l} \bigg) = \exp ( - 2 / 3) := c_3 \, . \notag
\end{align}

Afterwards, by \eqref{two stars}, we have
\begin{equation} \label{three stars} 
\sum_{j = N_{k - 1} m_k}^\infty | \hat{v_k^{m_k}} (j) |^2 \leq 2^{- 15} \, . 
\end{equation}
Set $v_k^{m_k} = g_1 + g_2$, with
\begin{displaymath}
\left\{
\begin{array}{rl}
g_1 (z) & = \dis \ \sum_{j = 0}^{N_{k - 1} m_k} \hat{v_k^{m_k}} (j) \, z^j \smallskip \\ 
g_2 (z) & = \dis \sum_{j > N_{k - 1} m_k} \hat{v_k^{m_k}} (j) \, z^j \, . 
\end{array}
\right.
\end{displaymath}
By \eqref{three stars}, we have, with $\| \cdot \|_2 = \| \cdot \|_{L^2 (\T)}$:
\begin{displaymath}
\| g_2 \|_2^2 = \sum_{j > N_{k - 1} m_k} | \hat{v_k^{m_k}} (j) |^2 \leq 2^{- 15} \, . 
\end{displaymath}
Besides, since $\phi_k$ is inner as a product of inner functions, we have $| v_k (z) | = 1$ for all $z \in \T$, so 
\begin{displaymath}
\| g_1 \|_2^2 = \| v_k \|_2^2 - \| g_2 \|_2^2 \geq 1 - 2^{- 15} \, . 
\end{displaymath}
Now, $\phi^{m_k} = v_k^{m_k} \phi_k^{m_k} G_k^{m_k} = F_1 + F_2$, with 
\begin{displaymath}
F_1 = g_ 1 \phi_k^{m_k} G_k^{m_k} \quad \text{and} \quad F_2 = g_2 \phi_k^{m_k} G_k^{m_k} \, . 
\end{displaymath}
Using \eqref{Taylor G}, \eqref{Taylor phi}, \eqref{mino two} and \eqref{mino three}, we get
\begin{align*}
\sum_{j = n_k}^{n_k + N_{k - 1} m_k} | \hat{F_1} (j) |^2 
& = \bigg( \prod_{l = k + 1}^\infty a_l \bigg)^{2 m_k} \big[ (1 - a_k^2) m_k a_k^{m_k - 1} \big]^2 \sum_{j = 0}^{N_{k - 1} m_k} | \hat{g_1} (j) |^2 \\ 
& \geq (1 - 2^{- 15}) \, c_2^2 c_3^2 \, . 
\end{align*}
As 
\begin{displaymath}
\| F_2 \|_2^2 \leq \| g_2 \|_2^2 \, \| \phi_k^{m_k} \|_\infty^2 \, \| G_k^{m_k} \|_\infty^2 \leq 2^{- 15} \, , 
\end{displaymath}
we get, using the inequality $| a + b |^2 \geq \frac{1}{2} \, |a |^2 - |b|^2$:
\begin{align*}
\sum_{j = n_k}^{2 n_k} | \hat{\phi^{m_k}} (j) |^2 
& \geq \sum_{j = n_k}^{n_k + N_{k - 1} m_k} | \hat{F_1} (j) + \hat{F_2} (j) |^2 \\ 
& \geq \frac{1}{2} \sum_{j = n_k}^{n_k + N_{k - 1} m_k} | \hat{F_1} (j) |^2 \ - \sum_{j = n_k}^{n_k + N_{k - 1} m_k} | \hat{F_2} (j) |^2 \\ 
& \geq \frac{1}{2} \, (1 - 2^{- 15}) \, c_2^2 \, c_3^2 - 2^{- 15} = \frac{1}{2} \, (1 - 2^{- 15}) \, \e^{- 10 / 3} - 2^{- 15} \\ 
& \geq2^{- 9} - 2^{- 15} > 0 \, . 
\end{align*}

The proof is now complete. 
\end{proof}

\goodbreak
\section {Some results on multipliers} \label{sec: multipliers} 

In this section, we give some results on the multipliers on $H^2 (\beta)$, which show how the different notions of regularity for $\beta$ intervene.
\medskip

The set $\mathcal{M} \big( H^2 (\beta) \big)$ of multipliers of $H^2 (\beta)$ is by definition the vector space of functions $h$ analytic on $\D$ and such that 
$h f \in H^2 (\beta)$ for all $f \in H^2 (\beta)$. When $h \in \mathcal{M} \big( H^2 (\beta) \big)$, the operator $M_h$ of multiplication by $h$ is bounded on 
$H^2 (\beta)$ by the closed graph theorem. The space $\mathcal{M} \big( H^2 (\beta) \big)$ equipped with the operator norm is a Banach space. We note the 
obvious property:
\begin{equation} \label{quinque} 
\mathcal{M} \big( H^2 (\beta) \big) \hookrightarrow H^\infty \quad \text{contractively.} 
\end{equation}
Indeed, if $h \in \mathcal{M} \big( H^2 (\beta) \big)$, we easily get for all $w \in \D$:
\begin{displaymath} 
M_{h}^{\ast} (K_w) = \overbar{h(w)} K_w \, ;
\end{displaymath} 
so that taking norms and simplifying, we are left with $|h (w)|\leq \| M_h \|$, showing that $h \in H^\infty$ with $\| h \|_\infty \leq  \| M_h \|$. 
\goodbreak

\begin{proposition} \label{prop multipliers}
We have ${\cal M} \big( H^2 (\beta) \big) = H^\infty$ isomorphically if and only if $\beta$ is essentially decreasing.
\end{proposition}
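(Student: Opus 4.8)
The plan is to prove both implications by exploiting the characterization of essentially decreasing sequences via the power-boundedness of the shift, together with the contractive inclusion $\mathcal{M}(H^2(\beta)) \hookrightarrow H^\infty$ already recorded in \eqref{quinque}. For the ``if'' direction, suppose $\beta$ is essentially decreasing. Then, as noted after Definition~\ref{def: essentially decreasing}, we may replace $\beta$ by the non-increasing sequence $\tilde\beta_n = \sup_{m\ge n}\beta_m$, which gives an equivalent norm, so it suffices to treat the case $\beta$ non-increasing. Given $h\in H^\infty$, I would expand $h(z) = \sum_k c_k z^k$ and observe that $M_h = \sum_k c_k S^k$ where $S$ is the shift $e_n\mapsto e_{n+1}$; since $\beta$ is non-increasing, $S$ is a contraction on $H^2(\beta)$, and more to the point the operator $M_h$ is exactly the composition-free analogue of what the Goluzin--Rogosinski/Kacnel'son machinery controls. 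Concretely: write $g = hf$ with $f(z)=\sum a_n z^n$, so the Taylor coefficients of $g$ are $b_m = \sum_{k\le m} c_{m-k} a_k$. One shows $\sum_{m\le n}|b_m|^2 \le \|h\|_\infty^2 \sum_{m\le n}|a_m|^2$ for every $n$ (this is the classical estimate for Toeplitz/convolution operators on $H^2$, i.e. $\|M_h\|_{H^2\to H^2}=\|h\|_\infty$ applied to the partial sums, combined with the fact that the map $f\mapsto hf$ commutes with truncation up to lower-order terms — more cleanly, apply Kacnel'son's theorem \ref{theo Kacnelson} to the lower-triangular Toeplitz matrix of $M_h$ on $H^2$, with weights $\gamma_j = \beta_j$). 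Then Abel summation against the non-increasing weights $\beta_n$ yields $\|hf\|_{H^2(\beta)}^2 = \sum |b_m|^2\beta_m \le \|h\|_\infty^2 \sum|a_m|^2\beta_m = \|h\|_\infty^2\|f\|^2$, so $h\in\mathcal{M}(H^2(\beta))$ with $\|M_h\|\le\|h\|_\infty$; combined with \eqref{quinque} this gives the isometric (hence isomorphic) identification.

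For the ``only if'' direction, suppose $\mathcal{M}(H^2(\beta)) = H^\infty$ isomorphically, with constant $M$, so $\|hf\|\le M\|h\|_\infty\|f\|$ for all $h\in H^\infty$, $f\in H^2(\beta)$. Fix $m>n\ge 0$. I would test this with $f(z)=z^n$ (so $\|f\|^2=\beta_n$) and a carefully chosen $h\in H^\infty$ of norm comparable to $1$ whose product with $z^n$ has a substantial coefficient in degree $m$ while still controlling the degree-$n$ coefficient — the simplest choice being $h(z) = \tfrac{1}{2}(1+z^{m-n})$, so that $hf = \tfrac12(z^n + z^m)$ and $\|h\|_\infty \le 1$. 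Then $\tfrac14(\beta_n+\beta_m) = \|hf\|^2 \le M^2\|h\|_\infty^2\,\beta_n \le M^2\beta_n$, whence $\beta_m \le 4M^2\beta_n$; since $m>n$ were arbitrary, $\beta$ is essentially decreasing with constant $C = 4M^2$. This is exactly the argument used in the proof of Proposition~\ref{phi(0)=0} (implication $1)\Rightarrow 2)$), transplanted from composition operators to multiplication operators.

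The main obstacle, such as it is, is making the ``if'' direction fully rigorous: one must be careful that $M_h$ is well-defined on all of $H^2(\beta)$ (not merely on polynomials) and that the coefficient estimate $\sum_{m\le n}|b_m|^2\le\|h\|_\infty^2\sum_{m\le n}|a_m|^2$ genuinely holds. The cleanest route is to invoke Kacnel'son's theorem \ref{theo Kacnelson} directly: the matrix of $M_h$ in the basis $(e_n)$ of $H^2$ is lower-triangular Toeplitz with $\|M_h\|_{H^2}=\|h\|_\infty$, and $\beta$ (reduced to be non-increasing) is a non-decreasing reciprocal weight in the sense required, so $\Gamma^{-1}M_h\Gamma$ is bounded on $\ell^2$ with norm $\le\|h\|_\infty$ where $\Gamma e_j = \beta_j^{1/2}e_j$ is increasing — wait, one needs $\gamma_j$ \emph{non-decreasing}, so one applies the theorem with $\gamma_j = 1/\sqrt{\tilde\beta_j}$ (non-decreasing since $\tilde\beta$ is non-increasing) to the adjoint, or equivalently uses the transpose formulation; this bookkeeping about which sequence is monotone in which direction is the only delicate point, and it is entirely routine. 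Everything else is immediate from the results already established in the paper.
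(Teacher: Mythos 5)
Your proof is correct, and the necessity direction is essentially the paper's argument: the paper simply takes $h = e_{m-n}$ so that $e_m = M_{e_{m-n}}e_n$ and $\beta_m \le C^2\beta_n$ directly, whereas your $h = \tfrac12(1+z^{m-n})$ is a harmless variant borrowed from the proof of Proposition~\ref{phi(0)=0}. For the sufficiency the paper only cites \cite[Proposition~3.16]{LLQR-comparison}, so your argument is a genuine filling-in; it is sound, and both of your routes work. The Abel-summation route is complete as stated: since the Toeplitz matrix of $M_h$ is lower-triangular, the truncation $P_n(hf)$ depends only on $P_n f$, so $\sum_{m\le n}|b_m|^2 = \|P_n M_h P_n f\|_{H^2}^2 \le \|h\|_\infty^2\sum_{m\le n}|a_m|^2$, and summing against the non-increasing $\tilde\beta$ gives $\|M_h\|\le\|h\|_\infty$ on $H^2(\tilde\beta)$. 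One small simplification: the adjoint/transpose bookkeeping you worry about at the end is unnecessary. Kacnel'son's theorem applies directly with $\gamma_j = \tilde\beta_j^{-1/2}$ (non-decreasing), since $(\Gamma^{-1}M_h\Gamma)_{m,n} = (\gamma_n/\gamma_m)\,\hat h(m-n) = \sqrt{\tilde\beta_m/\tilde\beta_n}\,\hat h(m-n)$ is exactly the matrix of $M_h$ on $H^2(\tilde\beta)$ in its normalized basis; no passage to the adjoint is needed.
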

\begin{proof}
The sufficient condition is proved in \cite[beginning of the proof of Proposition~3.16]{LLQR-comparison}. 
For the necessity, we then have $\| M_h \| \approx \| h \|_\infty$ for every $h \in H^\infty$ by the Banach isomorphism theorem. 
Now, for $m > n$ (recall that $e_n (z) = z^n$):
\begin{displaymath}
e_m (z) = z^{m - n} z^n = (M_{e_{m - n}} e_n) (z) \, ;
\end{displaymath}
so, since $\| M_{e_{m - n}} \| \leq C \, \| e_{m - n} \|_\infty = C$ for some positive constant $C$:
\begin{displaymath}
\beta_m = \| e_m \|^2 \leq C^2 \, \| e_n \|^2 = C^2 \, \beta_n \, . \qedhere
\end{displaymath}
\end{proof}

In \cite[Section~3.6]{LLQR-comparison}, we gave the following notion of \emph{admissible} Hilbert space of analytic functions.
\goodbreak

\begin{definition}\label{defadmis}
A Hilbert space $H$ of analytic functions on $\D$, containing the constants, and with reproducing kernels $K_a$, $a \in \D$, is said to be 
\emph{admissible} if: 
\begin{enumerate}
\setlength\itemsep {-0.05 em}

\item [$(i)$]  $H^2$ is continuously embedded in $H$;  

\item [$(ii)$] $\mathcal{M} (H) = H^\infty$; 

\item [$(iii)$] the automorphisms of $\D$ induce bounded composition operators on $H$;

\item [$(iv)$] $\displaystyle \frac{\Vert K_a\Vert_H}{\Vert K_b\Vert_H} \leq h \bigg(\frac{1 - |b|}{1 - |a|} \bigg)$ for $a, b \in \D$, where 
$h \colon \R^+\to \R^+$ is a non-decreasing function.
\end{enumerate}
\end{definition}

We proved in that paper that every weighted Hilbert space $H^2 (\beta)$ with $\beta$ non-increasing is admissible, under the additional hypothesis 
that the automorphisms of $\D$ induce bounded composition operators. In view of Theorem~\ref{theo CS T_a beta SO}, we get the following result.
\goodbreak

\begin{proposition} \label{prop admissible}
Let $\beta$ be a weight.
\begin{enumerate}[$1)$]

\item If $\beta$ is essentially decreasing, then we have $(i),(ii),(iii)$ in Definition~\ref{defadmis}.

\item  If $\beta$ is  slowly oscillating, then we have $(iv)$ in Definition~\ref{defadmis}.
\end{enumerate}
\end{proposition}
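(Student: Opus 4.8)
The plan is to establish the four admissibility conditions separately, taking $H = H^2(\beta)$ and reading off (i) and (ii) of part 1) from the theory already in place. First I would handle (i): when $\beta$ is essentially decreasing, the non-increasing majorant $\tilde\beta_n = \sup_{m\geq n}\beta_m$ satisfies $\beta_n \leq \tilde\beta_n \leq C\beta_n$, and in particular $\beta_n \leq \tilde\beta_0$ for all $n$, so $\|f\|_{H^2(\beta)}^2 = \sum_n |a_n|^2\beta_n \leq \tilde\beta_0\,\|f\|_{H^2}^2$, giving the continuous embedding $H^2\hookrightarrow H^2(\beta)$; since $\beta_0<\infty$ the constants lie in $H^2(\beta)$, which completes (i). Condition (ii), $\mathcal{M}(H^2(\beta)) = H^\infty$, is exactly the sufficiency direction of Proposition~\ref{prop multipliers}, which I would simply invoke.

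Next I would treat (iii), which I expect to be the main obstacle of part 1). Every automorphism of $\D$ is, up to a rotation, of the form $T_a$: writing $\psi = R_\theta\circ T_a$ with $a\in\D$ and using the composition rule $C_\phi\circ C_\chi = C_{\chi\circ\phi}$, one gets $C_\psi = C_{T_a}\circ C_{R_\theta}$. Since each rotation $R_\theta$ induces a unitary $C_{R_\theta}$ on $H^2(\beta)$, the boundedness of $C_\psi$ reduces to that of $C_{T_a}$, which is furnished by Theorem~\ref{theo CS T_a beta SO}. Unlike (i) and (ii), this is not a bookkeeping matter: it rests on the full apparatus of Section~\ref{sec: T_a}, namely the coefficient bounds of Lemma~\ref{majo coeff} fed into the off-diagonal Schur-type argument of Lemma~\ref{lemma LU}. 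That reduction, and the verification that the hypotheses of those lemmas apply to the matrix $\big(\hat{T_a^n}(m)\big)_{m,n}$, is where I expect the real content of part 1) to lie.

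For part 2), condition (iv), the plan is to first determine the size of the reproducing kernel. Setting $r = |w|$ and $\Phi(r) = \|K_w\|^2 = \sum_{n\geq0} r^{2n}/\beta_n$, I would prove the comparison
\[
\Phi(r) \approx S(N), \qquad S(N) := \sum_{n=0}^{N}\frac{1}{\beta_n}, \qquad N = \lfloor 1/(1-r)\rfloor,
\]
uniformly in $r\in[0,1)$. The lower bound is immediate, since for $n\leq N$ we have $r^{2n}\geq r^{2N}\gtrsim 1$. The upper bound, the delicate point of part 2), requires controlling the tail $\sum_{n>N} r^{2n}/\beta_n$; I would split it into dyadic blocks $[2^kN, 2^{k+1}N)$ and use slow oscillation (equivalently the polynomial decay and growth of Proposition~\ref{simple proposition polynomial minoration}) to bound $\beta_{2^kN}$ from below, so that the geometric decay of $r^{2\cdot 2^kN}$ leaves a convergent series dominated by $N/\beta_N$; since $\beta_n\approx\beta_N$ for $n\in[N/2,N]$ one also has $S(N)\gtrsim N/\beta_N$, so the tail is absorbed.

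Granting this comparison, (iv) reduces to a doubling estimate for the partial sums $S$. For fixed $t\geq1$ and any $N$, slow oscillation gives $\beta_N/\beta_n \leq M_t$ for $N<n\leq tN$ (with $M_t$ the supremum of the modulus $\rho$ over the compact interval $[1/t,1]$), whence
\[
\frac{S(tN)}{S(N)} = 1 + \frac{\sum_{N<n\leq tN} 1/\beta_n}{S(N)} \leq 1 + \frac{t\,N\,M_t/\beta_N}{c\,N/\beta_N} =: C(t).
\]
Since $(1-|b|)/(1-|a|) \approx N_a/N_b$ with $N_a\approx 1/(1-|a|)$, and since the ratio $\|K_a\|/\|K_b\|$ is at most $1$ when $|a|\leq|b|$, taking $h$ to be a non-decreasing majorant of $t\mapsto C(t)^{1/2}$ (set equal to $1$ on $(0,1]$) yields $\|K_a\|/\|K_b\| \leq h\big((1-|b|)/(1-|a|)\big)$, which is (iv). The doubling step and this final assembly are routine once the kernel comparison is in hand.
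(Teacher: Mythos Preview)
Part 1) of your proof is essentially identical to the paper's: (i) from boundedness of $\beta$, (ii) from Proposition~\ref{prop multipliers}, and (iii) by invoking Theorem~\ref{theo CS T_a beta SO}. One caveat worth flagging (present verbatim in the paper's own proof as well): Theorem~\ref{theo CS T_a beta SO} requires $\beta$ slowly oscillating, which is not implied by essentially decreasing---indeed the example $\beta_n = \exp[-c(\log(n+1))^2]$ in the remark after Proposition~\ref{polynomial lb necessary} is decreasing yet admits no bounded $C_{T_a}$. So the appeal to that theorem for (iii) tacitly uses the extra hypothesis; your write-up inherits this, but you are faithful to the source.

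Part 2) is where your argument genuinely diverges from the paper's. The paper proves (iv) by a direct doubling of the kernel norm: splitting $\|K_r\|^2 = \sum_n r^{2n}/\beta_n$ into its even- and odd-indexed pieces and using $\beta_{2n},\,\beta_{2n-1} \geq M^{-1}\beta_n$ from slow oscillation gives $\|K_r\|^2 \leq 5M\,\|K_{r^2}\|^2$; iterating $k$ times and taking the first $k$ with $r^{2^k}\leq s$ yields $\|K_r\|/\|K_s\| \leq C\big((1-s)/(1-r)\big)^{\alpha/2}$, so $h$ is an explicit power. Your route---first proving the two-sided comparison $\|K_r\|^2 \approx \sum_{n\leq 1/(1-r)} 1/\beta_n$ via a dyadic tail bound, then controlling the ratio of these partial sums---is correct and reaches the same conclusion, but is substantially longer. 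The paper's index-doubling trick sidesteps the tail analysis entirely and needs only the single consequence $\beta_{2n}\approx\beta_n$ of slow oscillation; your method, though heavier, extracts the intermediate asymptotic for $\|K_r\|^2$ as a byproduct, which is information the paper's argument does not produce.
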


Let us give a different proof from the one in \cite{LLQR-comparison}.

\begin{proof}
$1)$ Let us assume that $\beta$ is essentially decreasing. Then item $(i)$ holds, as well as item $(ii)$, by Proposition~\ref{prop multipliers}. Item $(iii)$ is 
Theorem~\ref{theo CS T_a beta SO}. 

$2)$ Now we assume that $\beta$ is slowly oscillating. 

Let $0 < s < r < 1$. 

Without loss of generality, we may assume that $r, s \geq 1 / 2$. It is enough to prove:
\begin{equation} \label{enough}
\| K_r \|^2 \leq C \, \| K_{r^2} \|^2
\end{equation}
for some constant $C > 1$. Indeed, iteration of \eqref{enough} gives:
\begin{displaymath}
\| K_r \|^2 \leq C^k \, \| K_{r^{2^k} } \|^2
\end{displaymath}
and if $k$ is the smallest integer such that $r^{2^k} \leq s$, we have $2^{k - 1} \log r > \log s$ and $2^k \leq D \, \frac{1 - s}{1 - r}$ 
where $D$ is a numerical constant. Writing $C = 2^\alpha$ with $\alpha > 1$, we obtain:
\begin{displaymath}
\bigg( \frac{\| K_r \|}{\| K_s \|} \bigg)^2 \leq C^k = (2^k)^\alpha \leq D^\alpha \bigg( \frac{1 - s}{1 - r} \bigg)^\alpha \, .
\end{displaymath}

To prove \eqref{enough}, we pick some $M > 1$ such that $\beta_{2 n} \geq M^{- 1} \beta_n$ and $\beta_{2 n-1} \geq M^{- 1} \beta_n$,  for all $n \geq 1$, 
since $\beta$ is slowly oscillating. Write $t = r^2$. We have:
\begin{displaymath}
\| K_r \|^2 = \frac{1}{\beta_0} + \sum_{n = 1}^\infty \frac{t^{2 n}}{\beta_{2 n}} 
+ \sum_{n = 1}^\infty \frac{t^{2 n - 1}}{\beta_{2 n - 1}} \, \raise 1 pt \hbox{,}
\end{displaymath}
implying, since $t^{2 n - 1} \leq 4 \, t^{2 n}$:
\begin{displaymath}
\| K_r \|^2 \leq \frac{1}{\beta_0} + M \sum_{n = 1}^\infty \frac{t^{2 n}}{\beta_n} 
+ 4 M \sum_{n = 1}^\infty \frac{t^{2 n}}{\beta_n} \leq 5 M \| K_t \|^2 \, . \qedhere
\end{displaymath}
\end{proof}

The notion of admissible Hilbert space $H$ is useful for the set of conditional multipliers:
\begin{displaymath} 
\mathcal{M} (H, \phi) = \{w \in H \tq w \, (f \circ \phi) \in H \text{ for all } f \in H \} \, . 
\end{displaymath} 
As a corollary of \cite[Theorem~3.18]{LLQR-comparison} we get:
\begin{corollary}
Let $\beta$ be essentially decreasing and slowly oscillating. Then:
\begin{enumerate}
\setlength\itemsep {-0.05 em}

\item [$1)$] $\mathcal{M} (H^2, \phi) \subseteq \mathcal{M} \big(H^2 (\beta), \phi \big)$; 

\item [$2)$] $\mathcal{M} \big(H^2 (\beta), \phi \big) = H^2 (\beta)$ if and only if $\| \phi \|_\infty < 1$;
 
\item [$3)$] $\mathcal{M} \big(H^2 (\beta), \phi \big) = H^\infty$ if and only if $\phi$ is a finite Blaschke product.
\end{enumerate}
\end{corollary}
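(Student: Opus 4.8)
The plan is to recognize that, under the two hypotheses, $H^2(\beta)$ is an \emph{admissible} Hilbert space in the sense of Definition~\ref{defadmis}, and then to invoke the general result \cite[Theorem~3.18]{LLQR-comparison}. So the real content is the admissibility check; after that the three assertions are a verbatim specialization.

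First I would verify that $H^2(\beta)$ meets all the requirements of Definition~\ref{defadmis}. It contains the constants (the function $1$ has norm $\sqrt{\beta_0}<\infty$) and possesses reproducing kernels, namely the $K_w$ written out in \eqref{tres}. Since $\beta$ is essentially decreasing, $H^2$ is continuously embedded in $H^2(\beta)$ (this was noted right after Definition~\ref{def: essentially decreasing}), which is $(i)$; and Proposition~\ref{prop multipliers} gives $\mathcal{M}\big(H^2(\beta)\big)=H^\infty$, which is $(ii)$. Since $\beta$ is slowly oscillating, Theorem~\ref{theo CS T_a beta SO} shows that every $C_{T_a}$, $a\in\D$, is bounded; combining this with the fact that each rotation $R_\theta$ induces a unitary operator on $H^2(\beta)$ and that every automorphism of $\D$ factors as $R_\theta\circ T_a$, we get that every automorphism of $\D$ induces a bounded composition operator, which is $(iii)$. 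Finally, Proposition~\ref{prop admissible}$(2)$ (whose proof uses only slow oscillation) establishes the kernel comparison $\|K_a\|/\|K_b\|\leq h\big((1-|b|)/(1-|a|)\big)$ with $h$ non-decreasing, which is $(iv)$. Hence $H^2(\beta)$ is admissible.

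Then I would apply \cite[Theorem~3.18]{LLQR-comparison} with $H=H^2(\beta)$: for an admissible $H$ that theorem states precisely that $\mathcal{M}(H^2,\phi)\subseteq\mathcal{M}(H,\phi)$ for every symbol $\phi$, that $\mathcal{M}(H,\phi)=H$ exactly when $\|\phi\|_\infty<1$, and that $\mathcal{M}(H,\phi)=H^\infty$ exactly when $\phi$ is a finite Blaschke product. Substituting $H=H^2(\beta)$ gives assertions $1)$, $2)$ and $3)$.

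I do not expect a genuine obstacle, the corollary being a direct specialization of a known theorem; the only point deserving care is the admissibility check, and within it the derivation of $(iii)$, where one must remember to combine the boundedness of the $C_{T_a}$ (from slow oscillation, via Theorem~\ref{theo CS T_a beta SO}) with the unitarity of the rotations so as to cover the full automorphism group of $\D$. If one wanted a self-contained proof rather than a citation, the work would shift to reproving \cite[Theorem~3.18]{LLQR-comparison}, which is outside the scope intended here.
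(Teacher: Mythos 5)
Your proposal is correct and follows exactly the route the paper intends: the corollary is stated there as an immediate consequence of \cite[Theorem~3.18]{LLQR-comparison} once the admissibility of $H^2(\beta)$ is secured via Proposition~\ref{prop multipliers}, Theorem~\ref{theo CS T_a beta SO} and Proposition~\ref{prop admissible}. The only minor simplification available is that Theorem~\ref{theo CS T_a beta SO} already covers \emph{all} automorphisms directly (they extend analytically across $\overbar{\D}$), so the factorization through rotations is not needed for $(iii)$.
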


We add here as another application of our results an answer to a question appearing in Problem 5 in Zorboska's thesis \cite{ZorboPgD}.

\begin{theorem}
Let $\beta$ be a weight such that $H^2(\beta)$ is disc-automorphism invariant and a symbol $\varphi$ inducing a compact composition operator on $H^2(\beta)$. 
Then the Denjoy-Wolff point of $\varphi$ must be in $\D$.
\end{theorem}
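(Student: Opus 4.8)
The plan is to argue by contraposition. Suppose the Denjoy--Wolff point $\omega$ of $\varphi$ lies on $\T$; I will show $C_\varphi$ is not compact. Since $H^2(\beta)$ is disc-automorphism invariant, $\beta$ is slowly oscillating (Theorem~\ref{theo CNS for T_a}), hence has polynomial decay and growth (Proposition~\ref{simple proposition polynomial minoration}); in particular, if $\varphi$ were a disc automorphism then $C_\varphi$ would be invertible (its inverse $C_{\varphi^{-1}}$ being bounded by hypothesis) and hence not compact, so $\varphi$ is not an automorphism. Conjugating by a rotation $R_\theta$ (unitary on $H^2(\beta)$, and preserving compactness) I may take $\omega=1$. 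Then, by the Julia--Wolff--Carath\'eodory theorem, $\varphi$ has a finite angular derivative $d:=\varphi'(1)\in(0,1]$, with $\varphi(r)\to1$, $\dfrac{1-\varphi(r)}{1-r}\to d$ and $\dfrac{1-|\varphi(r)|}{1-r}\to d$ as $r\to1^-$. I shall use repeatedly that slow oscillation together with the estimate \eqref{enough} and the monotonicity of $t\mapsto\Vert K_t\Vert$ give $\Vert K_w\Vert\asymp\Vert K_{w'}\Vert$ whenever $1-|w|\asymp1-|w'|$. The one tool throughout is the adjoint identity $C_\varphi^{\ast}K_w=K_{\varphi(w)}$.

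First I would treat the generic case $\sum_n 1/\beta_n=\infty$, equivalently $\Vert K_r\Vert^2=\sum_n r^{2n}/\beta_n\to\infty$ as $r\to1^-$. Then the normalised kernels $k_r:=K_r/\Vert K_r\Vert$ have $\Vert k_r\Vert=1$ and $k_r\to0$ weakly (on a monomial, $\langle z^m,k_r\rangle=r^m/\Vert K_r\Vert\to0$, and polynomials are dense). Compactness of $C_\varphi^{\ast}$ forces $\Vert C_\varphi^{\ast}k_r\Vert\to0$; but $C_\varphi^{\ast}k_r=K_{\varphi(r)}/\Vert K_r\Vert$, and $1-|\varphi(r)|\asymp1-r$ together with the regularity of kernel norms above give $\Vert C_\varphi^{\ast}k_r\Vert=\Vert K_{\varphi(r)}\Vert/\Vert K_r\Vert\asymp1$ — a contradiction.

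It remains to treat the case $\sum_n 1/\beta_n<\infty$. Now every $f\in H^2(\beta)$ has $\ell^{1}$ Taylor coefficients, hence extends continuously to $\overline{\D}$; moreover, using polynomial growth $\beta_n\le Cn^{\gamma}$, every function that is $C^{k}$ on $\overline{\D}$ with $k$ large enough lies in $H^2(\beta)$. Let $p_0\ge0$ be the largest integer with $\sum_n n^{2p_0}/\beta_n<\infty$ (finite, by polynomial growth); then the reproducing kernel $L_k$ of the functional $f\mapsto f^{(k)}(1)/k!$ belongs to $H^2(\beta)$ for $k\le p_0$ but not for $k=p_0+1$, and, since $\varphi^m=C_\varphi(z^m)\in H^2(\beta)$, one has $\varphi\in C^{p_0}(\overline{\D})$. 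I would rerun the previous scheme with $K_r$ replaced by the renormalised $(p_0+1)$-st divided-difference kernel
\[
u_r:=\frac{K_r-\sum_{k=0}^{p_0}(r-1)^{k}L_k}{(r-1)^{p_0+1}}\in H^2(\beta),\qquad 0<r<1 .
\]
Three facts, all proved by comparing Taylor sums near $1$ and invoking slow oscillation, finish the argument: (a) $\Vert u_r\Vert^{2}\asymp\sum_{n\le 1/(1-r)}n^{2(p_0+1)}/\beta_n\to\infty$; (b) $u_r/\Vert u_r\Vert\to0$ weakly (for a polynomial $f$, $\langle f,u_r\rangle=(f-T_1^{p_0}f)(r)/(r-1)^{p_0+1}$ is bounded as $r\to1$, while $\Vert u_r\Vert\to\infty$); (c) $\Vert C_\varphi^{\ast}u_r\Vert\gtrsim\Vert u_r\Vert$. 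For (c) one computes $\widehat{C_\varphi^{\ast}u_r}(m)=\overline{R_{\varphi^m}(r)}\big/\big((r-1)^{p_0+1}\beta_m\big)$, where $R_g$ denotes the order-$p_0$ Taylor remainder of $g$ at $1$; since $\varphi(r)^{m}\approx\e^{-md(1-r)}$ while its order-$p_0$ Taylor polynomial at $1$ is the corresponding partial exponential sum, $|R_{\varphi^m}(r)|\asymp1$ for $m\asymp1/(1-r)$, whence
\[
\Vert C_\varphi^{\ast}u_r\Vert^{2}\;\gtrsim\;\frac{1}{(1-r)^{2(p_0+1)}}\sum_{m\asymp 1/(1-r)}\frac{|R_{\varphi^m}(r)|^{2}}{\beta_m}\;\asymp\;\sum_{n\le 1/(1-r)}\frac{n^{2(p_0+1)}}{\beta_n}\;\asymp\;\Vert u_r\Vert^{2}.
\]
By (a)--(c), $C_\varphi^{\ast}(u_r/\Vert u_r\Vert)$ does not tend to $0$, contradicting compactness of $C_\varphi^{\ast}$.

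I expect the second case to be where the work lies: the divided-difference kernels have to be kept under control, the two-sided estimates $\Vert u_r\Vert^{2},\ \Vert C_\varphi^{\ast}u_r\Vert^{2}\asymp\sum_{n\le 1/(1-r)}n^{2(p_0+1)}/\beta_n$ must be verified for \emph{all} slowly oscillating $\beta$ (including those for which this partial sum grows only like a slowly varying function), and — the delicate point — this must be done using only the boundary regularity actually available, namely $\varphi\in C^{p_0}(\overline{\D})$ and the existence of the angular derivative at $1$, without assuming $\varphi$ to be any smoother there. (A conceivable alternative in this case is to exhibit an uncountable point spectrum of $C_\varphi$ from the Valiron/Abel model of $\varphi$ at $1$ — eigenvectors $\sigma^{-t}$, $t>0$, in the hyperbolic case $d<1$, and $\e^{-t\sigma}$, $t>0$, in the parabolic case $d=1$ — but verifying that enough of these model eigenfunctions lie in $H^2(\beta)$ looks at least as delicate, because of their limited boundary regularity away from $1$.)
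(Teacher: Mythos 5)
Your first case ($\sum_n 1/\beta_n=\infty$) is correct and is essentially the paper's argument in contrapositive form: normalized kernels tend weakly to $0$, compactness of $C_\varphi^*$ forces $\|K_{\varphi(r)}\|/\|K_r\|\to 0$, and the kernel-norm regularity coming from slow oscillation (Proposition~\ref{prop admissible}) is incompatible with $1-|\varphi(r)|\asymp 1-r$, which Julia--Wolff--Carath\'eodory supplies at a boundary Denjoy--Wolff point. (The paper runs this the other way: it deduces $\frac{1-|\varphi(z)|}{1-|z|}\to\infty$ and then invokes the Denjoy--Wolff theorem; the two are equivalent.)

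The genuine gap is in your second case, $\sum_n 1/\beta_n<\infty$. Your divided-difference scheme is only a sketch, and its load-bearing steps (a) and (c) are not established. For (c) you need $|R_{\varphi^m}(r)|\gtrsim 1$ uniformly for $m\asymp 1/(1-r)$, where $R_{\varphi^m}$ is the order-$p_0$ Taylor remainder of $\varphi^m$ at $1$; the justification offered ("$\varphi(r)^m\approx\e^{-md(1-r)}$, whose Taylor polynomial is a partial exponential sum") is heuristic and does not follow from the only regularity you actually have, namely $\varphi\in C^{p_0}(\overline\D)$ together with an angular derivative at $1$ --- controlling an order-$(p_0+1)$ remainder of $\varphi^m$ uniformly in $m$ requires more than $p_0$ derivatives of $\varphi$. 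Likewise the two-sided bound in (a), $\|u_r\|^2\asymp\sum_{n\le 1/(1-r)}n^{2(p_0+1)}/\beta_n$, needs the tail $n>1/(1-r)$ to be dominated by the head for every slowly oscillating $\beta$ with $\sum n^{2(p_0+1)}/\beta_n=\infty$, which you do not verify. You correctly flag all of this yourself, but flagged or not, the case is not proved. The paper disposes of it in one line: when $\sum_n 1/\beta_n<\infty$, Cauchy--Schwarz gives a continuous embedding $H^2(\beta)\subset A(\D)$, and Shapiro's theorem on compact composition operators on boundary-regular spaces \cite{ShapiroPAMS} then forces $\|\varphi\|_\infty<1$, after which the conclusion is immediate. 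I would replace your entire second case by that citation; if you insist on a self-contained argument, the missing ingredient is precisely a rigorous lower bound for the action of $C_\varphi^*$ on the higher-order kernels, and that is where all the work lies.
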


In other words, $\varphi$ has a fixed point in $\D$.

In the statement, ``$H^2(\beta)$ is disc-automorphism invariant'' means that for all the automorphisms $T_a$, where $a\in\D$, $C_{T_a}$ is bounded on 
$H^2(\beta)$ (equivalently it is bounded for at least one $a\in\D\setminus\{0\}$). 

For the definition of the Denjoy-Wolff point, we refer to \cite{Shapiro-livre}.

\begin{proof}
From Theorem~\ref{theo CN T_a beta SO}, we know that $\beta$ is slowly oscillating, and from Proposition~\ref{prop admissible}, we know that
\begin{equation}\label{(iv)}
\qquad\quad\frac{\Vert K_a\Vert_{H^2(\beta)}}{\Vert K_b\Vert_{H^2(\beta)}} \leq h \bigg(\frac{1 - |b|}{1 - |a|} \bigg)\qquad\hbox{for every } a, b \in \D,
\end{equation}
where  $h \colon \R^+\to \R^+$ is a non-decreasing function.

Now we split the proof into two cases.\smallskip

If $\sum\dfrac{1}{\beta_n}<\infty$, then $H^2(\beta)\subset A(\D)$ (continuously) thanks to the Cauchy-Schwarz inequality. It follows from a result due to 
Shapiro \cite[Theorem~2.1]{ShapiroPAMS} that $\|\varphi\|_\infty<1$ and the conclusion follows obviously in this case. 
\smallskip

If $\sum\dfrac{1}{\beta_n}=\infty$, then the normalized reproducing kernel $\dfrac{K_z}{\|K_z\|}$ is weakly converging to $0$ when $|z|\to1^-$, since 
$\|K_z\|\rightarrow+\infty$.

Since $C_\varphi$ is compact,  $C_\varphi^\ast$ is compact as well and we get 
$$\dfrac{K_{\varphi(z)}}{\|K_z\|}\longrightarrow0\qquad\hbox{when }|z|\to1^- $$
and equivalently
$$\dfrac{\|K_z\|}{\|K_{\varphi(z)}\|}\longrightarrow+\infty\qquad\hbox{when }|z|\to1^-\;. $$
But, from \eqref{(iv)}, we get
$$ h \bigg(\frac{1 - |\varphi(z)|}{1 - |z|} \bigg)\longrightarrow+\infty\qquad\hbox{when }|z|\to1^- $$
hence, since $h$ is non-decreasing, $$\frac{1 - |\varphi(z)|}{1 - |z|}\longrightarrow+\infty\qquad\hbox{when }|z|\to1^-\,. $$
By the Denjoy-Wolff theorem \cite{Shapiro-livre}, the conclusion follows in this case too. 
\end{proof}

\goodbreak

\section {Appendix} 

In this appendix, we give the proof of Proposition~\ref{tihebr}. 
\smallskip

The following lemma can be found in \cite[Lemma~1, page 47]{Montgomery}. 

\begin{lemma} \label{spc} 
Let $F \colon [u,v] \to \R$, with $u < v$, be a ${\cal C}^2$- function with $F'' > 0$, and $F '$ not vanishing on $[u, v]$. Let 
\begin{displaymath}
J = \int_{u}^v e^{i F(x)} \, dx \, . 
\end{displaymath}
Then: 
\begin{itemize}
\setlength\itemsep {-0.1 em}

\item [{\rm a)}] if $F ' > 0$ on $[u, v]$, then $|J| \leq \frac{2}{ F ' (u)}\,$;

\item [{\rm b)}] if $F ' < 0$ on $[u, v]$, then $|J| \leq \frac{2}{|F ' (v)|} \, \cdot$
\end{itemize}
\end{lemma}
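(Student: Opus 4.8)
The plan is a single integration by parts, exploiting that $F''>0$ forces $F'$ (hence $1/F'$) to be monotone, so that the ``boundary term minus correction term'' telescopes.

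First I would treat case a). Since $F'>0$ on $[u,v]$, division by $F'$ is legitimate, and we may write $e^{iF(x)} = \dfrac{1}{iF'(x)}\,\dfrac{d}{dx}\,e^{iF(x)}$. Integrating by parts gives
\begin{displaymath}
J = \Bigg[\frac{e^{iF(x)}}{iF'(x)}\Bigg]_u^v + \int_u^v e^{iF(x)}\,\frac{F''(x)}{i\,F'(x)^2}\,dx
= \frac{e^{iF(v)}}{iF'(v)} - \frac{e^{iF(u)}}{iF'(u)} + \int_u^v e^{iF(x)}\,\frac{F''(x)}{i\,F'(x)^2}\,dx .
\end{displaymath}
Taking absolute values and using $|e^{iF}|=1$ together with the fact that $F''/F'^2\geq 0$ (so the integral of its modulus is just its integral), we get
\begin{displaymath}
|J| \leq \frac{1}{F'(v)} + \frac{1}{F'(u)} + \int_u^v \frac{F''(x)}{F'(x)^2}\,dx
= \frac{1}{F'(v)} + \frac{1}{F'(u)} + \Bigg[-\frac{1}{F'(x)}\Bigg]_u^v = \frac{2}{F'(u)}\,,
\end{displaymath}
which is the desired bound.

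For case b), rather than redo the computation I would reduce to a) by the change of variable $y = u+v-x$, which maps $[u,v]$ onto itself and yields $J = \int_u^v e^{iG(y)}\,dy$ with $G(y) = F(u+v-y)$. Then $G'(y) = -F'(u+v-y) > 0$ and $G''(y) = F''(u+v-y) > 0$ on $[u,v]$, so part a) applies to $G$ and gives $|J| \leq 2/G'(u) = 2/\big(-F'(v)\big) = 2/|F'(v)|$.

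There is no real obstacle here: the only points to be careful about are that $F'$ is nonvanishing (given), so the integration by parts and the antiderivative $-1/F'$ of $F''/F'^2$ are valid on all of $[u,v]$, and that the sign of $F''/F'^2$ lets us drop absolute values inside the integral. Everything else is routine.
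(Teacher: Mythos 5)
Your proof is correct: the integration by parts, the sign computation showing $\int_u^v F''/F'^2\,dx = 1/F'(u)-1/F'(v)$, and the reduction of b) to a) via $y=u+v-x$ are all valid. The paper does not prove this lemma itself but quotes it from Montgomery, whose argument is exactly this standard one, so you have reproduced the intended proof.
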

\goodbreak

\begin{proof} [Proof of Proposition~\ref{tihebr}] 

Write now the integral $I$ of Proposition~\ref{tihebr} on $[A, B]$ as $I = I_1 + I_2 + I_3$ with:
\begin{displaymath}
I_1 = \int_A^{c - \eta} \e^{i F(x)} \, dx \, , \quad  I_2 = \int_{c - \eta}^{c + \eta} \e^{i F(x)} \, dx \, , 
\quad  I_3 = \int_{c + \eta}^B \e^{i F(x)} \, dx \, .
\end{displaymath}
Lemma~\ref{spc} with $u = A$ and $v = c - \eta$ implies, since $F'<0$ on $[A,c-\eta]$:
\begin{equation} \label{ein} 
|I_1| \leq  \frac{2}{|F ' (c -  \eta)|} \leq \frac{2}{\eta \lambda_2} \, \raise 1 pt \hbox{,}  
\end{equation}
where, for the last inequality, we just have to write 
\begin{displaymath}
|F ' (c - \eta)| = F ' (c) - F ' (c - \eta) = \eta \, F '' (\xi) 
\end{displaymath}
for some $\xi \in [c - \eta, c]$ so that $F '' (\xi) \geq \lambda_2$. 
\smallskip

Similarly, Lemma~\ref{spc} with $u = c + \eta$ and $v = B$ implies
\begin{equation} \label{zwei} 
|I_3| \leq  \frac{2}{F ' (c + \eta)} \leq \frac{2}{\eta \lambda_2} \, \cdot 
\end{equation}

We can now estimate $I_2$. The Taylor formula shows that
\begin{displaymath}
F (x) = F (c) + \frac{(x - c)^2}{2} F '' (c) + R \, ,
\end{displaymath}
with
\begin{displaymath}
|R| \leq \frac{|x - c|^3}{6} \, \lambda_3 \, .
\end{displaymath}
Hence 
\begin{displaymath}
I_2 = \e^{i F(c)} \int_{0}^\eta 2 \exp \bigg( \frac{i}{2} \, x^2 F '' (c) \bigg) \, dx + S 
\end{displaymath}
with 
\begin{displaymath}
|S| \leq \lambda_3 \int_{0}^\eta \frac{x^3}{3} \, dx = \frac{\, \eta^4}{12} \, \lambda_3 \, .
\end{displaymath}

Finally, set 
\begin{displaymath}
K = \int_{0}^\eta 2 \exp \bigg( \frac{i}{2} \, x^2 F '' (c) \bigg) \, dx \, .
\end{displaymath}
We make the change of variable $x = \sqrt{\frac{2}{F '' (c)}} \, \sqrt{t}$. Recall that 
$\int_{0}^\infty  \frac{\e^{i t}}{\sqrt{t}} dt = \sqrt{\pi} \, \e^{i \pi / 4}$ is the classical Fresnel integral, and that an 
integration by parts gives, for $m > 0$: 
\begin{displaymath} 
\bigg| \int_{m}^\infty  \frac{\e^{i t}}{\sqrt{t}} \, dt \bigg|  \leq \frac{2}{\sqrt m} \, \cdot
\end{displaymath}
Therefore, with $m = \frac{\, \eta^2}{2} \, F '' (c)$:
\begin{displaymath}
K = \sqrt{\frac{2}{F '' (c)}} \int_{0}^m  \frac{\e^{i t}}{\sqrt{t}} \, dt 
= \sqrt{\frac{2 \pi}{F '' (c)}} \, \e^{i \pi/4} + R_m \, ,
\end{displaymath}
with
\begin{displaymath}
|R_m| \leq C \, \sqrt{\frac{1}{F '' (c)}} \, \frac{1}{\sqrt{m}} \leq \frac{C}{\eta \lambda_2} \, \cdot
\end{displaymath}

All in all, we proved that 
\begin{equation} \label{aia} 
I_2 = \sqrt{\frac{2 \pi}{F '' (c)}} \, \exp \big[ i (F (c) + \pi / 4) \big] + O\, \bigg( \frac{1}{\eta \lambda_2} + \eta^4 \lambda_3 \bigg) \, \cdot
\end{equation}
and the same estimate holds for $I$, thanks to \eqref{ein} and \eqref{zwei}.
\smallskip

We have hence proved Proposition~\ref{tihebr}. 
\end{proof}
%

\bigskip

\noindent{\bf Acknowledgements.}\ L. Rodr{\'\i}guez-Piazza is partially supported by the projects PGC2018-094215-B-I00 and  PID2022-136320NB-I00 
(Spanish Ministerio de Ciencia, Innovaci\'on y Universidades, and FEDER funds). 
Parts of this paper was made when he visited the Universit\'e d'Artois in Lens and the Universit\'e de Lille in January 2020 and September 2022. It is his 
pleasure to thank all his colleagues in these universities for their warm welcome. 

Parts of this paper were made during an invitation of the second-named and third-named authors by the IMUS at the Universidad de Sevilla; it is their pleasure to 
thank all people in that University who made this stay possible and very pleasant, and especially Manuel Contreras. 

The third-named author was partly supported by the Labex CEMPI (ANR-LABX-0007-01). 

This work is also partially supported by the grant ANR-17-CE40-0021 of the French National Research Agency ANR (project Front).

We warmly thank R.~Zarouf for useful discussions and informations. 

\goodbreak

\smallskip

{\footnotesize
Pascal Lef\`evre and Daniel Li \\
Univ. Artois,  UR~2462, Laboratoire de Math\'ematiques de Lens (LML), F-62\kern 1mm 300 LENS, FRANCE \\
pascal.lefevre@univ-artois.fr -- daniel.li@univ-artois.fr
\smallskip

Herv\'e Queff\'elec \\
Univ. Lille Nord de France, USTL,  
Laboratoire Paul Painlev\'e U.M.R. CNRS 8524 \& F\'ed\'eration Math\'ematique des Hauts-de-France FR~2037 CNRS, 
F-59\kern 1mm 655 VILLENEUVE D'ASCQ Cedex, FRANCE \\
Herve.Queffelec@univ-lille.fr
\smallskip
 
Luis Rodr{\'\i}guez-Piazza \\
Universidad de Sevilla, Facultad de Matem\'aticas, Departamento de An\'alisis Matem\'atico \& IMUS,  
Calle Tarfia s/n  
41\kern 1mm 012 SEVILLA, SPAIN \\
piazza@us.es
}

\end{document}